\documentclass[a4paper,11pt]{article}
\usepackage[utf8x]{inputenc}
\usepackage{amsmath,amsxtra}
\usepackage[varg]{txfonts}
\usepackage[unicode]{hyperref}
\usepackage{framed}
\usepackage[amsmath,hyperref,framed,thmmarks]{ntheorem}
\usepackage{mathrsfs}
\usepackage{graphicx}
\usepackage[oldcommands]{algorithm2e}
\usepackage{enumitem}
\usepackage{nicefrac}

\setlength{\FrameSep}{.5ex}
\theoremseparator{.}
\makeatletter
\renewcommand{\newframedtheorem}[1]{%
\theoremprework{\framed\vspace{-1.5ex}}%
\theorempostwork{\vspace{-2ex}\endframed}%
\newtheorem@i{#1}%
}
\makeatother
\newtheorem{theorem}{Theorem}[section]
\newtheorem{proposition}[theorem]{Proposition}
\newtheorem{lemma}[theorem]{Lemma}
\newtheorem{corollary}[theorem]{Corollary}
\newtheorem{definition}[theorem]{Definition}

\theorembodyfont{\upshape}
\theoremsymbol{$\Diamond$}
\newtheorem{remark}[theorem]{Remark}

\newtheorem{conjecture}[theorem]{Conjecture}
\makeatletter
\newtheoremstyle{proof}%
{\item[\theorem@headerfont\hskip\labelsep ##1\theorem@separator]}%
{\item[\ifx\@empty##3\else\theorem@headerfont\hskip \labelsep ##1\ of\ ##3\theorem@separator\fi]}
\makeatother
\theoremheaderfont{\itshape}
\theoremstyle{proof}
\theoremseparator{.}
\theoremsymbol{\ensuremath{\Box}}
\newtheorem{proof}{Proof}
\theoremsymbol{``\ensuremath{\Box}''}
\newtheorem{sketch}{Sketch of proof}
\theoremheaderfont{\sffamily\bfseries}\theorembodyfont{\sffamily\upshape}
\theoremstyle{plain}
\theoremnumbering{alph}
\theoremsymbol{$\Diamond$}
\theoremprework{\small}
\theorempostwork{\normalsize}
\def\bfseries{\fontseries \bfdefault \selectfont \boldmath}

\hypersetup{
breaklinks=true,
colorlinks=true,
linkcolor=blue,
citecolor=blue,
urlcolor=blue,
}

\setlength{\parindent}{0ex}
\setlength{\parskip}{1ex}

\makeatletter
\@mathmargin=1cm

\makeatother

\allowdisplaybreaks

\renewcommand{\theenumi}{\textup{(\roman{enumi})}}
\renewcommand{\labelenumi}{\theenumi}

\makeatletter
\def\@fnsymbol#1{\ensuremath{\ifcase#1\or *\or **\or {**}* \or {**}{**}\else\@ctrerr\fi}}
\makeatother

\DeclareMathSizes{10}{10}{7}{7}

\newcommand{\nfrac}[2]{\nicefrac{#1}{#2}}

\newcommand{\abs}[1]{\ensuremath{\left|#1\right|}}
\newcommand{\br}[1]{\ensuremath{\left(#1\right)}}

\newcommand{\lnorm}[2][2]{\ensuremath{\left\|#2\right\|_{L^{{#1}}}}}
\newcommand{\norm}[1]{\ensuremath{\left\|#1\right\|}}

\newcommand{\seq}[2][\eps>0]{\ensuremath{\br{#2}_{{#1}}}}
\newcommand{\seqn}[2][k]{\ensuremath{\br{{#2}_{#1}}_{#1\in\N}}}

\newcommand{\set}[1]{\ensuremath{\left\{#1\right\}}}
\newcommand{\sett}[2]{\ensuremath{\left\{#1\,\middle|\,#2\right\}}}
\renewcommand{\sp}[1]{\ensuremath{\left\langle#1\right\rangle}}
\newcommand{\sq}[1]{\ensuremath{\left[#1\right]}}


\numberwithin{equation}{section}
\numberwithin{figure}{section}
\numberwithin{table}{section}
\numberwithin{algocf}{section}
\makeatletter
\let\c@table\c@figure
\let\c@algocf\c@figure
\newcommand\captionof[1]{\def\@captype{#1}\caption}
\makeatother

\DeclareFontFamily{U}{dbnsymb}{}
\DeclareSymbolFont{dbnsymb}{U}{dbnsymb}{m}{n}
\DeclareFontShape{U}{dbnsymb}{m}{n}{<1-100>dbnsymb}{}
\DeclareMathSymbol{\overcrossing}{\mathord}{dbnsymb}{033}
\DeclareMathSymbol{\undercrossing}{\mathord}{dbnsymb}{034}

\renewcommand{\a}{\ensuremath{\alpha}}
\renewcommand\angle{\mathop{\mbox{$<\!\!\!)\,$}}\nolimits}
\DeclareMathOperator{\area}{\sH^2}

\DeclareMathOperator{\bra}{bra}
\DeclareMathOperator{\bri}{bri}
\newcommand{\Cn}[1][\omega]{\ensuremath{\mathscr C_{#1}}}

\renewcommand{\d}{\ensuremath{\,\mathrm{d}}}

\newcommand{\ddg}{\g''}
\newcommand{\ddgth}{\ensuremath{\ddg_\th}}
\DeclareMathOperator{\dist}{dist}
\newcommand{\dg}{\g'}

\newcommand{\Eb}{\ensuremath{E_{\mathrm{bend}}}}

\newcommand{\Er}{\ensuremath{\mathcal R}}
\newcommand{\Eth}{\ensuremath{E_\th}}
\newcommand{\eps}{\ensuremath{\varepsilon}}
\newcommand{\g}{\gamma}
\newcommand{\gth}{\ensuremath{\g_\th}}

\renewcommand{\kappa}{\ensuremath{\varkappa}}
\newcommand{\length}{\ensuremath{\mathscr{L}}}

\newcommand{\N}{\ensuremath{\mathbb{N}}}

\renewcommand{\omega}{\mathfrak{k}}
\renewcommand{\P}[1][\psi]{\ensuremath{\mathbb P_{#1}}}

\newcommand{\R}{\ensuremath{\mathbb{R}}}
\newcommand{\K}{\ensuremath{\mathcal{K}}}
\newcommand{\cC}{\ensuremath{\mathscr{C}}}
\newcommand{\sH}{\ensuremath{\mathscr{H}}}
\newcommand{\cT}{\ensuremath{\mathcal{T}}}
\newcommand{\cU}{\ensuremath{\mathcal{U}}}
\newcommand{\refeq}[2]{\ensuremath{\stackrel{\text{\makebox[0cm][c]{\eqref{eq:#1}}}}{#2}}}
\renewcommand{\rho}{\ensuremath{\varrho}}

\newcommand{\rzd}{\ensuremath{(\R/\Z,\R^3)}}
\renewcommand{\S}{\ensuremath{\mathbb{S}}}
\newcommand{\scl}[1][\omega]{\ensuremath{\overline{\Cn[#1]}}}
\DeclareMathOperator{\sign}{sign}

\DeclareMathOperator{\TC}{TC}
\newcommand{\tg}{\tilde\gamma}
\DeclareMathOperator{\tge}{tpc}
\newcommand{\Tge}{\widetilde{\tge}}
\renewcommand{\th}{\ensuremath{\vartheta}}
\newcommand{\tkc}[1][2,b]{\mathcal{T}({#1})}
\newcommand{\uni}[1]{\mathbf{e}_{#1}}
\newcommand{\uvector}[1]{\ensuremath{\frac{\overrightarrow{#1}}{\abs{#1}}}}

\newcommand{\Z}{\ensuremath{\mathbb{Z}}}

\newcommand{\ON}{{\textnormal{on\,\,}}} 
\newcommand{\Foa}{{\textnormal{for all\,\,}}} 
\newcommand{\Span}{{\textnormal{span\,}}}

\title{The elastic trefoil is the twice covered circle}
\author{Henryk Gerlach\thanks{EPF Lausanne, Switzerland,
\url{henryk.gerlach@gmail.com}}
\and Philipp Reiter\thanks{Fakult\"at f\"ur Mathematik,
Universit\"at Duisburg--Essen, 45117 Essen, Germany, \url{philipp.reiter@uni-due.de}}
\and
Heiko von der Mosel\thanks{Institut f\"ur Mathematik, RWTH Aachen University, Templergraben 55, 52062 Aachen, Germany, \url{heiko@instmath.rwth-aachen.de}}
}

\begin{document}
\maketitle

\begin{abstract}
We investigate  the elastic behavior of knotted loops of springy wire.
To this end we 
minimize the classic bending energy~$\Eb=\int\kappa^2$ together with
a small multiple  of ropelength~$\Er=\textnormal{length}/\textnormal{thickness}$
in order to penalize selfintersection. Our main objective is to characterize
{\it elastic knots}, i.e., 
all limit configurations of
energy minimizers of the total energy 
 $\Eth:=\Eb+\th\Er$ as $\vartheta$ tends to zero.  The elastic unknot
 turns out to be the round circle
 with bending energy $(2\pi)^2$. For all 
 (non-trivial) knot
 classes for which the natural lower bound $(4\pi)^2$ for the bending energy
 is sharp, the respective elastic knot is the twice covered circle. The
 knot classes for which $(4\pi)^2$ is sharp are precisely the 
 $(2,b)$-torus knots for odd $b$ with $|b|\ge 3$
 (containing the trefoil). In particular,
 the elastic trefoil is the twice covered circle.
\end{abstract}

\paragraph{Keywords:} Knots, torus knots, bending energy, ropelength, energy minimizers.

\paragraph{AMS Subject Classification:} 49Q10, 53A04, 57M25, 74B05

\section{Introduction}

The central issue addressed in this paper is the following:
{\rm Knotted loops made of elastic wire spring into some 
(not necessarily unique) stable
configurations when released. 
Can one characterize these configurations?}

There are (at least) three beautiful toy models of such
springy  knots designed by J.~Langer;
see the images in
Figure \ref{fig:springy-knots}.
And one may
ask: why isn't there the springy trefoil? Simply experimenting with
an elastic wire with a hinge reveals the answer: the final shape of
the elastic trefoil would  simply be too boring to play with,
forming two circular
flat loops stacked on top of each other; 
see the image on the bottom right of Figure \ref{fig:springy-knots}.

\begin{figure}
\begin{center}
\begin{tabular}{cc}
\includegraphics[width=0.5\textwidth]{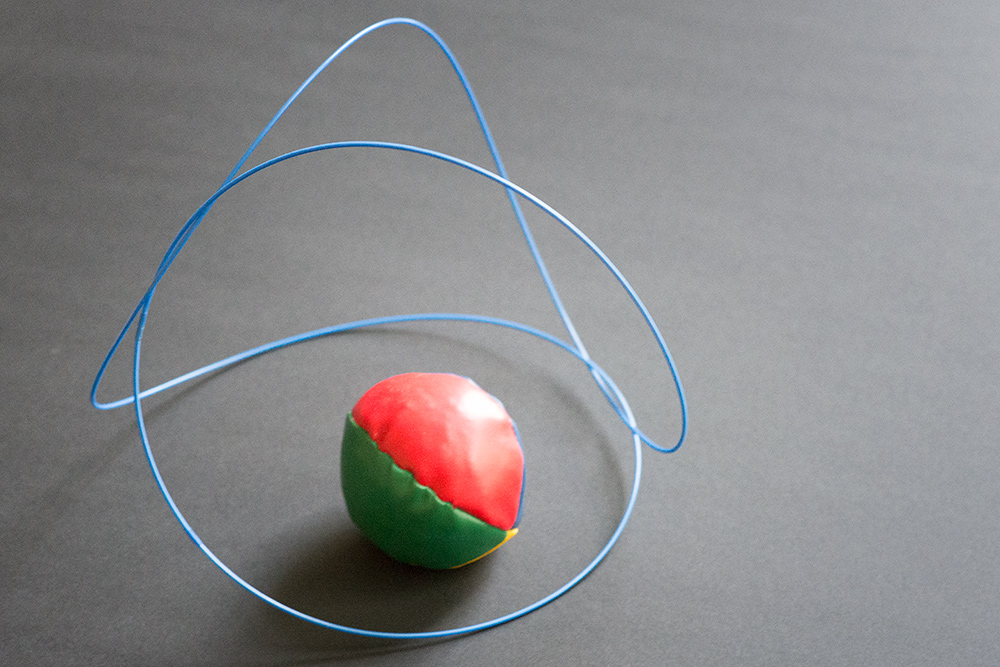} \\
\includegraphics[width=0.5\textwidth]{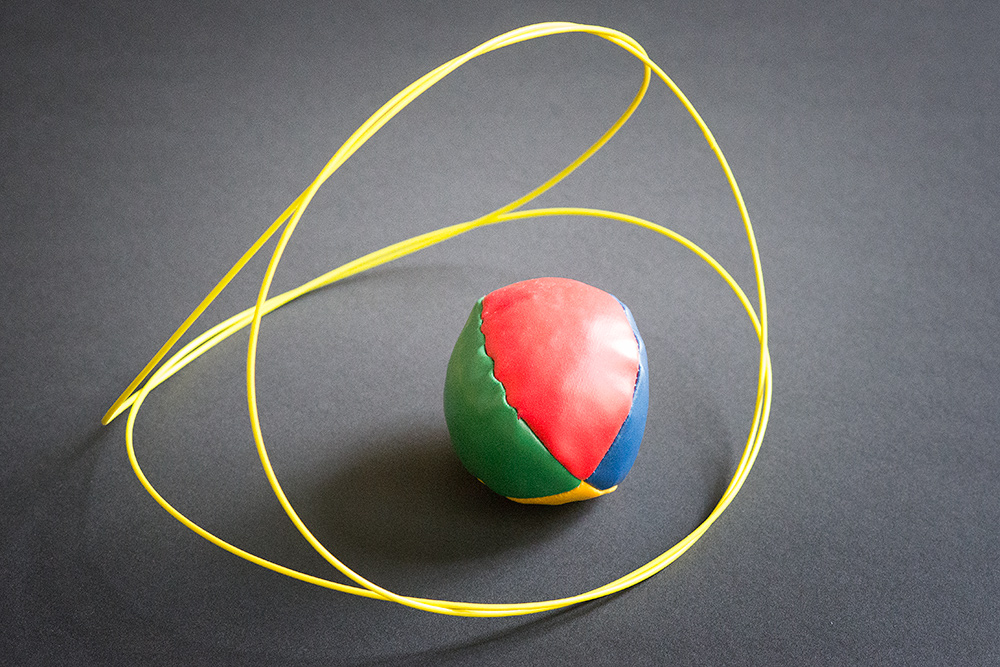} \\
\includegraphics[width=0.5\textwidth]{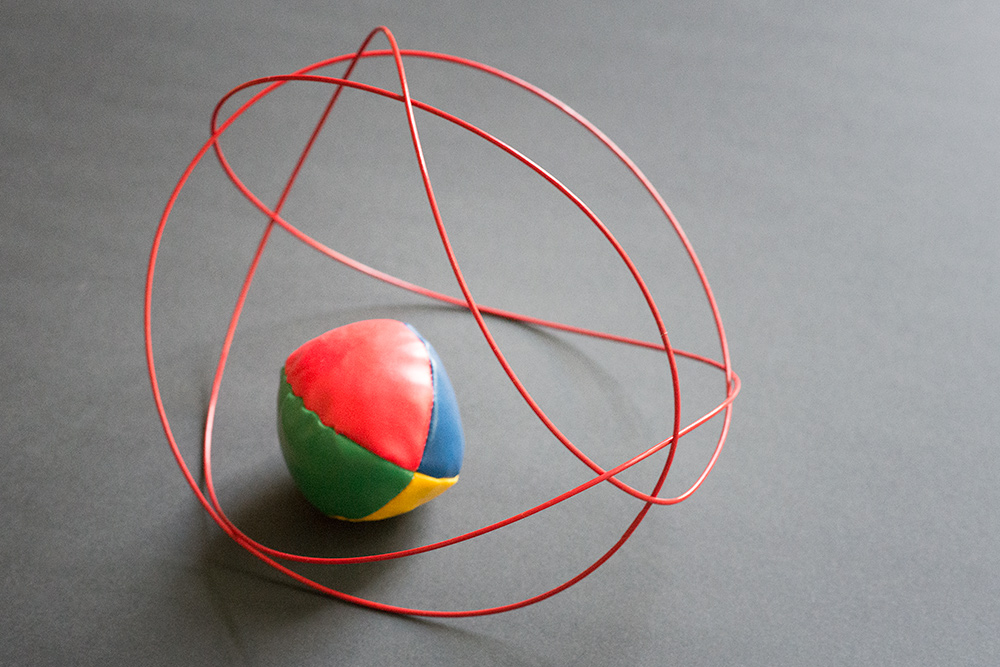}
    \end{tabular}
    \end{center}
    \caption{Springy knots: figure-eight knot, mathematician's loop, and Chinese
    button knot.
    Wire models manufactured by {\sc why knots}, Box 635, Aptos, CA 95003,
    in 1980;
    coloured photographs by B. Bollwerk, Aachen.}
    \label{fig:springy-knots}
    \end{figure}

Mathematically, the classification of elastic knots 
is a fascinating problem, and our aim is to justify the 
behaviour of the trefoil and of more general torus knots
by means of the simplest possible model in elasticity. Ignoring
all effects of extension and   shear the wire
is represented by a sufficiently smooth closed curve $\gamma:\R/\Z\to\R^3$
of unit length and parametrized by arclength (referred to as {\it unit loop}).
We follow 
Bernoulli's approach to consider the {\it bending energy}
\begin{equation}\label{bending}
\Eb(\gamma):=\int_\gamma\kappa^2\d s
\end{equation}
as  the only intrinsic elastic energy---neglecting any additional torsional
effects, and we also exclude external forces and
friction that might be present  in Langer's toy models.
Here, $\kappa=|\gamma''|$ is the classic local curvature of the curve.
To respect a given knot class when minimizing the bending energy
we have to preclude self-crossings. In principle we could add
any self-repulsive {\it knot energy} for that matter, imposing infinite
energy barriers between different knot classes; see, e.g., 
the recent surveys \cite{blatt-reiter-proc,blatt-reiter-mbmb,randy-project_2013,isaac_2014} on such energies and their impact
on geometric knot theory. But  a solid (albeit thin) wire motivates a steric
constraint in form of a fixed (small) thickness of all curves in competition. 
This, and the
geometric rigidity it imposes on the curves lead us to adding a
small amount of 
{\it ropelength} $\Er$ to form the \emph{total energy}
\begin{equation}\label{total-energy}
\Eth := \Eb+\th\Er,\qquad\th >0,
\end{equation}
to be minimized within a prescribed  tame\footnote{A
 knot class is called \emph{tame}
if it contains polygons, i.e., piecewise (affine) linear loops.
Any knot class containing smooth curves is tame, see
Crowell and Fox~\cite[App.~I]{crowell-fox}, and vice versa, any tame knot class
contains smooth representatives. Consequently,
$\cC(\K)\ne\emptyset$ if and only if $\K$ is tame.}
knot class $\K$, that is,
on the class $\cC(\K)$ of all unit loops representing $\K$.
As ropelength is defined as the quotient of length and thickness 
it boils
down for unit loops 
to $\Er(\gamma)=1/\triangle[\gamma]$. 
Following
Gonzalez and Maddocks \cite{gm} the thickness  $\triangle[\cdot]$ may be
expressed as
\begin{equation}\label{thickness}
\triangle[\gamma]:=\inf_{u,v,w\in\R/\Z\atop u\not= v\not=w\not= u}R(\gamma(u),
\gamma(v),\gamma(w)),
\end{equation}
where $R(x,y,z)$ denotes the unique radius of the (possibly degenerate) circle passing
through $x,y,z\in\R^3.$

By means of the direct method in the calculus of variations we show that
in every given (tame) knot class $\K$ and
for every $\th>0$ there is indeed a unit loop $\gamma_\th\in\cC(\K)$
minimizing the total energy $\Eth$ within $\K$;
see Theorem \ref{thm:existence-total} in Section \ref{sect:mini}.

To understand the behaviour of very thin springy knots we investigate
the limit $\th\to 0$. More precisely, we consider arbitrary sequences
$(\gamma_\th)_\th $ of minimizers
in a fixed knot class $\K$ and look at their possible
limit curves $\gamma_0$  as $\th\to 0$. We call any such limit curve
an {\it elastic knot} for $\K$. None
 of these elastic knots is embedded (as we would expect in view of the
 self-contact present in the 
 wire models in Figure \ref{fig:springy-knots})---unless
$\K$ is the unknot, in which case $\gamma_0$ is the once-covered
circle; see Proposition \ref{prop:nonembedded}.
However, it turns out that each elastic knot $\gamma_0$  lies in
the $C^1$-closure of
unit loops representing ${\K}$,
and non-trivial elastic knots can be shown to have strictly smaller
bending energy $\Eb$ than any unit loop in $\cC(\K)$  
(Theorem \ref{thm:existence-limit}). 
This minimizing property  of elastic knots is particularly interesting
for  those
 non-trivial knot classes
$\K$ permitting representatives with bending energy arbitrarily close
to the smallest possible lower bound $(4\pi)^2$ (due to F\'ary's and Milnor's  lower bound $4\pi$ on total curvature
 \cite{fary,milnor}): We can show that for those knot classes
 the {\it only} possible shape of
any elastic knot is that of the twice-covered circle. 
This naturally leads to the question: 

{\it For which knot classes $\K$ do we have $\inf_{\cC(\K)}\Eb=(4\pi)^2$?}

We are going to show that this is true {\it exactly} for the class 
$\cT(2,b)$
of $(2,b)$-torus knots for any odd integer $b$ with $|b|\ge 3.$ Any
other non-trivial knot class has a strictly larger infimum of bending energy.
These facts 
and several other characterizations of $\cT(2,b)$ are
contained in our Main Theorem \ref{thm:elastic-shapes}, from which we can
extract the following complete description of elastic $(2,b)$-torus knots
(including the trefoil):

\begin{theorem}[Elastic $(2,b)$-torus knots]\label{thm:main}
For any odd integer $\abs b\ge 3$
the unique elastic $(2,b)$-torus knot is the twice-covered circle.
\end{theorem}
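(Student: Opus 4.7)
My plan is to identify any elastic $(2,b)$-torus knot $\gamma_0$ as the twice-covered circle of radius $1/(4\pi)$ by sandwiching $\gamma_0$ between upper and lower bending-energy estimates that force equality in Cauchy--Schwarz, and then extracting the explicit shape from this equality case.

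First, I would pin down the asymptotic energies $\Eb(\gamma_\th)\to(4\pi)^2$ and $\int_0^1\kappa_\th\,ds\to 4\pi$ along the $\Eth$-minimizing sequence. The minimizer property $\Eth(\gamma_\th)\le\Eth(\sigma)$ applied to competitors $\sigma_n\in\cC(\cT(2,b))$ with $\Eb(\sigma_n)\to(4\pi)^2$---possible by the sharpness of the infimum stated in Main Theorem~\ref{thm:elastic-shapes}---yields, via a diagonal subsequence $\th_n\to 0$, $\limsup_\th\Eb(\gamma_\th)\le(4\pi)^2$. For the matching lower bound, every $\gamma_\th\in\cC(\cT(2,b))$ has bridge number~$2$, so F\'ary--Milnor gives $\int_0^1\kappa_\th\,ds\ge 4\pi$ and Cauchy--Schwarz then gives
$$
\Eb(\gamma_\th)\;=\;\int_0^1\kappa_\th^2\,ds\;\ge\;\br{\int_0^1\kappa_\th\,ds}^2\;\ge\;(4\pi)^2.
$$
Both inequalities are thus asymptotically saturated. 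Writing $\bar\kappa_\th := \int_0^1\kappa_\th\,ds$, the Cauchy--Schwarz defect equals $\int_0^1(\kappa_\th-\bar\kappa_\th)^2\,ds \to 0$, so $\kappa_\th \to 4\pi$ strongly in $L^2(\R/\Z)$. Combined with the weak-$W^{2,2}$ convergence $\gamma_\th\rightharpoonup\gamma_0$ from Theorem~\ref{thm:existence-limit}, this should produce $|\gamma_0''|\equiv 4\pi$ almost everywhere, so that $\gamma_0$ is a closed unit-speed $W^{2,2}$-curve of constant curvature $4\pi$.

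The principal obstacle is the rigidity step: concluding that the only such $\gamma_0$ arising as the weak-$W^{2,2}$ limit of a bridge-$2$ minimizing family is the planar circle of radius $1/(4\pi)$ traced twice. Constant curvature alone does not force planarity in $\R^3$, since closed $W^{2,2}$-curves of constant curvature with non-trivial distributional torsion exist; planarity must be extracted from the $\cT(2,b)$-provenance of $\gamma_0$. I would analyze the tangent indicatrix $T_0=\gamma_0'\colon\R/\Z\to S^2$---a closed $W^{1,2}$-curve of length $4\pi$ traversed at constant speed $4\pi$, obtained as the $C^0$-limit of the indicatrices $T_\th$ of bridge-$2$ torus knots. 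Invoking Milnor's representation of total curvature in terms of the local-extremum counts $\mu_\th(u)\ge 4$ in directions $u\in S^2$, together with the saturation $\int\kappa_\th\,ds\to 4\pi$, should force $\mu_\th(u)\to 4$ for $\sigma$-almost every $u$, compelling $T_0$ in the limit to trace a great circle exactly twice; integrating in $s$ recovers the planar circle of circumference $1/2$ covered twice. Making this indicatrix-collapse rigorous in the weak-$W^{2,2}$ framework, and cleanly transferring the bridge-number constraint through the limit, is the technical crux of the argument and presumably constitutes the main content of the proof of Main Theorem~\ref{thm:elastic-shapes}, from which Theorem~\ref{thm:main} is then a direct extraction.
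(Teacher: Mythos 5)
Your first step---forcing $\Eb(\g_{\th})\to(4\pi)^2$ by comparison competitors, saturating Cauchy--Schwarz, and concluding $\abs{\g_0''}=4\pi$ a.e.---is sound and is essentially the paper's Proposition~\ref{prop:constcurv} (modulo a slip: for a nontrivially knotted curve Milnor's formula $\TC(\g)=\tfrac12\int_{\S^2}\mu(\g,\nu)\d\area(\nu)$ rests on $\mu\ge 2$ a.e., not $\mu\ge 4$; the latter would give $\TC\ge 8\pi$, which is false for your competitors). The genuine gap is the rigidity step, and it cannot be repaired along the lines you sketch. Every member of the one-parameter family of tangential pairs of circles $\tge_\varphi$, $\varphi\in[0,\pi]$ (two round circles of radius $\tfrac1{4\pi}$ meeting tangentially, cf.\ Corollary~\ref{cor:tg8}), has constant curvature $4\pi$, total curvature exactly $4\pi$, and $\mu(\tge_\varphi,\nu)=2$ for a.e.\ $\nu\in\S^2$; its tangent indicatrix consists of two great circles of length $2\pi$ each, also of total length $4\pi$. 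So every limit condition you derive---including the ``$\mu_\th\to 2$ for a.e.\ direction'' that saturation actually yields---is satisfied by all of these configurations, and no soft indicatrix-collapse argument can single out $\tge_0$. Note also that planarity is not the relevant dichotomy: the figure-eight $\tge_\pi$ is planar and still has to be excluded.

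What is actually needed, and what the paper supplies, is a \emph{quantitative rate comparison}. The explicit comparison torus knots $\tau_\rho$ of \eqref{eq:torus-knot} give $\Eth(\gth)\le(4\pi)^2+C\th^{2/3}$ and hence $\Er(\gth)\le C\th^{-1/3}$ (Proposition~\ref{prop:estimate-torus}). The crookedness estimate (Lemma~\ref{lem:crook}) then shows that any embedded $C^{1,1}$ knot $C^1$-close to $\tge_\varphi$ with $\varphi>0$ has a set of directions with $\mu\ge 3$ of measure at least $\tfrac{\pi}{16}\varphi\,\triangle[\g]$; its proof uses the braid/cylinder analysis of Section~\ref{sec:class} to locate three crossings of the two strands near the tangency point and a thickness lower bound on their separation. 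Feeding this into Milnor's integral formula yields $\Eb(\gth)-(4\pi)^2\ge 2\pi\area(\mathcal B(\gth))\ge c\,\varphi\,\th^{1/3}$, which contradicts the $\th^{2/3}$ upper bound as $\th\searrow0$. Your proposal contains neither the comparison-curve rate $\th^{2/3}$, nor the ropelength bound $\th^{-1/3}$, nor any mechanism tying the defect of crookedness of the \emph{approximating embedded curves} to their thickness, so the exclusion of $\tge_\varphi$ for $\varphi\in(0,\pi]$ remains unproved.
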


This result confirms our mechanical and numerical experiments
(see Figure~\ref{fig:low_trefoil_tgA} on the left
and Figure~\ref{fig:trefoil_sim}), as well as the heuristics and
the Metropolis Monte Carlo simulations 
of Gallotti and Pierre-Louis~\cite{gallotti-pierre-louis_2006},
and the numerical gradient-descent results by Avvakumov and
Sossinsky, see~\cite{sossinsky}
and references therein.

Our results especially affect knot classes with
bridge number two (see below for the precise definition)
which in the majority of cases appear in applications,
e.g., DNA knots,
see Sumners~\cite[p.~338]{sumners:dna}. The Main 
Theorem \ref{thm:elastic-shapes}, however, 
implies also that for knots \emph{different} from
the $(2,b)$-torus knots, the respective elastic knot 
is definitely \emph{not} the twice-covered circle.
Similar shapes as in Figure \ref{fig:trefoil_sim}  have been obtained numerically
by Buck and Rawdon~\cite{BR} for a related but different variational problem:
they minimize ropelength with a prescribed curvature bound (using a variant of
the Metropolis Monte Carlo procedure),
see~\cite[Fig.~8]{BR}.

\begin{figure}
\begin{center}
\begin{tabular}{cc}
\includegraphics[width=0.4\textwidth]{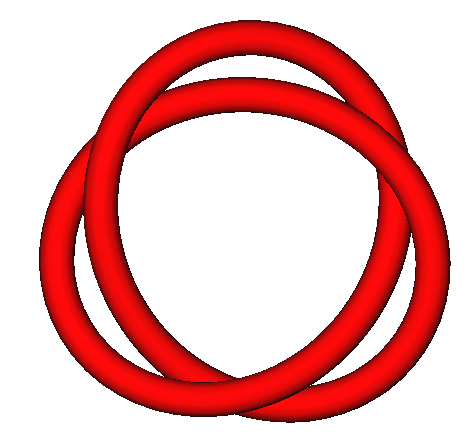} &
   \includegraphics[width=0.4\textwidth]{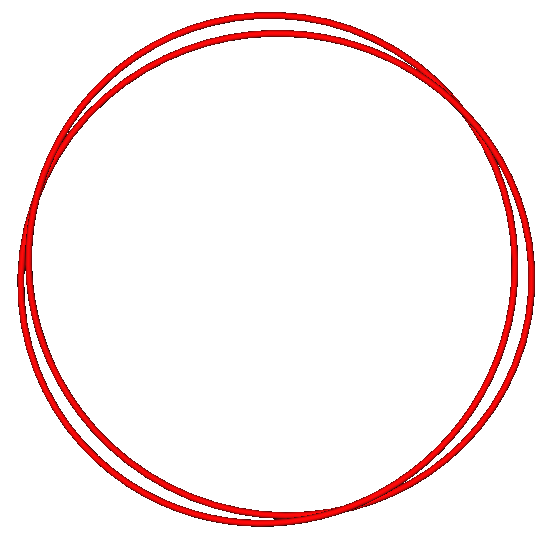} \\
    $\th = 0.1$ & $\th = 0.001$
    \end{tabular}
    \end{center}
    \caption{Minimizers
    of the total energy $\Eth$ in the class of trefoils
    approaching the twice covered circle as $\vartheta$ tends to zero.
    For numerical reasons the ropelength $\Er$ was substituted
    by a repulsive potential, the M\"obius energy introduced by O'Hara~\cite{oha:en}.}
    \label{fig:trefoil_sim}
    \end{figure}

The idea of studying $\g_0$ as a limit configuration  of minimizers
of
the mildly penalized bending energy  goes
back to earlier work of the third author \cite{vdM:meek}, only that
there ropelength in \eqref{total-energy}
is replaced by a self-repulsive potential, like the M\"obius energy introduced
by O'Hara \cite{oha:en}. By means of a Li-Yau-type inequality  for general
loops
\cite[Theorem 4.4]{vdM:meek} it was shown there that for
elastic $(2,b)$-torus knots,
the
maximal multiplicity of double points is three \cite[p.~51]{vdM:meek}.
Theorem \ref{thm:main} clearly shows that this multiplicity
bound is not sharp: the twice-covered circle has infinitely many double points
all of which have multiplicity two.
Lin and Schwetlick~\cite{lin-schwetlick_2010} consider
the gradient flow of
 the elastic energy plus the M\"obius energy scaled by
a certain parameter. However, there is no analysis of the equilibrium 
shapes, and they do not consider the limit case
of sending the prefactor of the M\"obius term to zero.

Directly analyzing
the shape  or even only the regularity of the $\Eth$-minimizers $\gamma_\th$
for positive $\th$  without going to the limit $\th\to 0$
seems much harder because of a~priorily unknown (and possibly
complicated) regions of self-contact that are determined by the minimizers
$\g_\th$ themselves. Necessary conditions were derived by a Clarke-gradient
approach in
 \cite{heiko2} for nonlinearly elastic rods, 
 and for 
 an alternative elastic self-obstacle formulation 
 regularity results were established  in
 \cite{vdM:eke3} depending on the geometry of contact.
If one replaces ropelength in \eqref{total-energy}
by a self-repulsive potential like in \cite{vdM:meek}
 one can prove $C^\infty$-smoothness of $\gamma_\th$
with the deep analytical methods developed
by He~\cite{he:elghf}, and the  second author in various cooperations
\cite{reiter:atme,reiter:rkepdc,blatt-reiter2,blatt-reiter-schikorra_2012}. 
But the corresponding Euler-Lagrange equations for $\Eth$ involving
complicated non-local terms 
do not seem
to give immediate access to determining the shape of $\gamma_\th.$
Notice that directly
minimizing the bending energy $\Eb$ in the $C^1$-closure of $\cC(\K)$
generally leads to a much larger number of minimizers of which the majority 
seems to correspond to 
quite unstable configurations in physical experiments. Our approach of 
penalizing the
bending energy by $\th$ times ropelength and approximating zero thickness by
letting $\th\to 0$ may be viewed as selecting those $\Eb$-minimizers
that
correspond to physically reasonable springy knots with sufficiently small
thickness.

Recall that our simple model neglects any effects of torsion. Twisting
the wire in the experiments before closing it at the hinge (without releasing
the twist) leads to completely different stable configurations; see
Figure \ref{fig:low_trefoil_tgA} on the right. 
So, in that case a more general Lagrangian
taking into account also these torsional effects would need to be
considered, and the question of classifying elastic knots with torsion
is wide open.

\begin{figure}
\begin{center}
   \includegraphics[width=0.3\textwidth]{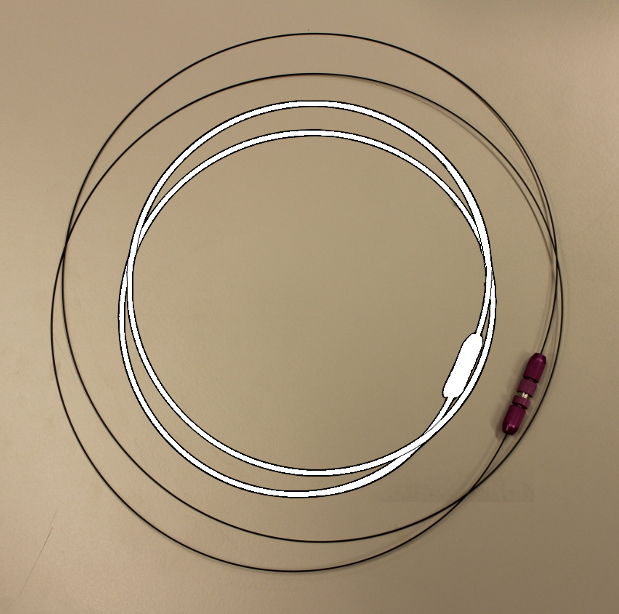}\quad
\includegraphics[width=.4\textwidth]{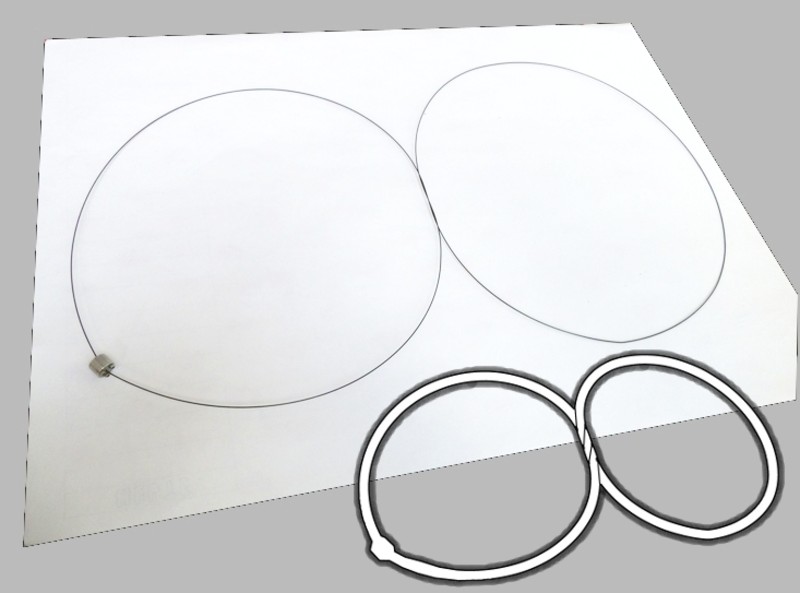}
\caption{Mechanical experiments. Left: The springy trefoil knot is close to the twice-covered
circle. Right: Adding a twist leads to a stable flat trefoil
configuration close to a planar figure-eight.
Wire models by courtesy of John~H.~Maddocks, Lausanne.}
\label{fig:low_trefoil_tgA}
\end{center}
\end{figure}

This is our strategy:

In Section~\ref{sect:mini} we establish first the existence of minimizers $\g_\th$
of the total energy $\Eth$ for each positive $\th$ (Theorem 
\ref{thm:existence-total}). Then we
pass to the limit $\th\to 0$ to obtain a limit configuration $\g_0$
in the $C^1$-closure of $\cC(\K)$ whose bending energy $\Eb(\g_0)$ serves
as a lower bound on $\Eb$ in $\cC(\K)$; see 
Theorem \ref{thm:existence-limit}.
By means of the classic uniqueness result of 
Langer and Singer \cite{LS:cs} on stable elasticae in $\R^3$ we identify
in Proposition \ref{prop:nonembedded} the round circle as the unique
elastic unknot. Elastic knots for knot classes with $(4\pi)^2$ as
sharp lower bound on the bending energy 
turn out to have constant curvature $4\pi$; see Proposition \ref{prop:constcurv}.
By a Schur-type result (Proposition  \ref{prop:minarc}) we can use
this curvature information to establish a  preliminary classification of such elastic knots as {\it tangential pairs of circles}, i.e., as pairs of round circles
each with radius $1/(4\pi)$ with (at least) one point of tangential intersection;
see Figure \ref{fig:intermediate} and Corollary \ref{cor:tg8}.
The key here to proving constant curvature
is an extension of the classic F\'ary--Milnor theorem on total curvature
to the $C^1$-closure
of $\cC(\K)$;
see  Theorem \ref{thm:fm} in the Appendix.
Our argument for that extension crucially relies on Denne's
result on the existence of alternating quadrisecants~\cite{denne}.
The fact that the elastic knot $\gamma_0$ for $\mathcal{K}$ is a tangential
pair of circles implies by means of Proposition \ref{prop:braids} that
$\mathcal{K}$ is actually the class $\cT(2,b)$ for odd $b$ with $|b|\ge 3$.

In order to extract the doubly-covered circle from the one-parameter
family of tangential pairs of circles as the only possible elastic knot
for $\cT(2,b)$
 we use in Section~\ref{sect:torus} 
explicit $(2,b)$-torus knots as suitable
  comparison curves. Estimating their
 bending energies and thickness values allows us to establish improved
 growth estimates for the total energy and ropelength of the $\Eth$-minimizers
 $\g_\th$; see Proposition \ref{prop:estimate-torus}.     
 In his seminal article~\cite{milnor} 
 Milnor
 derived
 the lower bound for the total curvature 
 by studying the \emph{crookedness} of a curve and relating it to
 the total curvature.
 For some regular curve $\g:\R/\Z\to\R^3$
 its crookedness  is the infimum over all $\nu\in\S^{2}$ of
  \begin{equation}\label{mu}
  \mu(\g,\nu) := \#\sett{t_{0}\in\R/\Z}{t_{0} \textnormal{ is a local maximizer of }
 t\mapsto\sp{\g(t),\nu}_{\R^{3}}}.
 \end{equation}
For any curve $\g$ close to a tangential pair of circles that is \emph{not} the
doubly covered circle we can show in Lemma \ref{lem:crook}  that
the set of directions $\nu\in\S^2$ for which $\mu(\gamma,\nu)\ge 3$
is bounded in measure from below by some
multiple of thickness $\triangle[\g]$. Assuming finally that $\g_\th $ converges
for $\th\to 0$ to such a limiting tangential pair of circles different from
the doubly covered circle we use this crookedness estimate
to obtain a contradiction against the total energy growth rate proved
in  Proposition~\ref{prop:estimate-torus}. Therefore, the only possible
limit configuration $\gamma_0$, i.e., elastic knot in the class of
$(2,b)$-torus knots, is the doubly covered circle.

As pointed out above,
the heart of our argument consists
of two bounds on the bending energy, the lower one, $(4\pi)^2$,
imposed by the F\'ary--Milnor inequality, the upper one
given by comparison curves.
The latter ones are constructed by considering
a suitable $(2,b)$-torus knot lying on a (standard) torus and
then shrinking the  width of the torus to zero, such
that the bending energy of the torus knot tends to the lower bound $(4\pi)^2$.
This indicates that a more general result should be valid
if these bounds can be extended to other knot classes.

Milnor~\cite{milnor} proved that the lower bound
on the total curvature is in fact $2\pi\bri\K$
where $\bri\K$ denotes the \emph{bridge index},
i.e., the minimum of crookedness\footnote{In fact, the bridge
index is defined as the minimum over the bridge number.
The latter coincides with crookedness for tame loops,
see Rolfsen~\cite[p.~115]{rolfsen}.} over the knot class~$\K$.
So we should ask which knot classes $\K$ can be represented by a
curve made of a number of strands, say $a$ strands, passing
inside a (full) torus, virtually in the direction of its central core.
The minimum value for $a$ with respect to the knot class~$\K$
is referred to as \emph{braid index}, $\bra\K$.
Thus we are led to believing that the following  assertion holds true
which has already been stated by
Gallotti and Pierre-Louis~\cite{gallotti-pierre-louis_2006}.

\begin{conjecture}[{Circular elastic knots}]
 The $a$-times covered circle is the (unique) elastic knot for the 
 (tame) knot class $\K$ if 
 $\bra\K=\bri\K=a$.
\end{conjecture}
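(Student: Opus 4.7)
The plan is to extend each of the four main steps used in the proof of Theorem~\ref{thm:main} from $(2,b)$-torus knots to general tame knot classes $\K$ with $\bra\K = \bri\K = a$.

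\emph{Step 1: sharp bending energy infimum.} First, I would show that $\inf_{\cC(\K)}\Eb = (2\pi a)^2$. The lower bound is immediate from Milnor's theorem \cite{milnor}, which gives $\int_\g \kappa\d s \geq 2\pi\bri\K = 2\pi a$ for $\g\in\cC(\K)$; Cauchy--Schwarz on the unit domain then yields $\Eb(\g) = \int_\g \kappa^2\d s \geq \bigl(\int_\g \kappa\d s\bigr)^2 \geq (2\pi a)^2$. For the matching upper bound, use the hypothesis $\bra\K = a$: pick a geometric $a$-strand braid representative of $\K$ and realize it as an embedded curve on a thin solid torus $\cT_\rho$ of tube radius $\rho$ around the $a$-times covered unit circle of length $1$. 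As $\rho\searrow 0$, the resulting curves lie in $\cC(\K)$ and their bending energy converges to $(2\pi a)^2$.

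\emph{Step 2: elastic knots have constant curvature $2\pi a$.} Combining the existence result Theorem~\ref{thm:existence-limit} with the $C^1$-closure extension of F\'ary--Milnor (Theorem~\ref{thm:fm}), any elastic knot $\g_0$ for $\K$ must satisfy $\Eb(\g_0) = (2\pi a)^2$ and $\int_{\g_0}\kappa\d s = 2\pi a$. The equality case of Cauchy--Schwarz then forces $\kappa \equiv 2\pi a$ almost everywhere, exactly as in Proposition~\ref{prop:constcurv}.

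\emph{Step 3: classification of limit configurations.} A Schur-type rigidity argument along the lines of Proposition~\ref{prop:minarc}, using constant curvature $2\pi a$ together with $C^1$-regularity, should show that $\g_0$ is a finite concatenation of circular arcs of common radius $1/(2\pi a)$ joined tangentially. Each full circle of this radius has length $1/a$, so the unit-length constraint forces $\g_0$ to be a \emph{tangential bouquet} of exactly $a$ such full circles -- a multi-parameter analogue of the tangential pairs of Corollary~\ref{cor:tg8}, containing the $a$-times covered circle as one distinguished configuration. The topological constraint that $\g_0$ lie in the $C^1$-closure of $\cC(\K)$ should further restrict the admissible bouquets.

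\emph{Step 4: selection of the $a$-times covered circle.} Generalize the crookedness estimate of Lemma~\ref{lem:crook} to show that for any $\g$ sufficiently $C^1$-close to a tangential $a$-bouquet that is \emph{not} the $a$-times covered circle, the set $\sett{\nu\in\S^2}{\mu(\g,\nu)\geq a+1}$ has $\sH^2$-measure bounded below by a positive multiple of $\triangle[\g]$. On the other side, construct a family of explicit thin $a$-braid representatives of $\K$ -- playing the role of the $(2,b)$-torus knots in Proposition~\ref{prop:estimate-torus} -- to derive improved growth rates for $\Eth(\g_\th)$ and $\Er(\g_\th)$ as $\th\searrow 0$. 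The interplay between these two estimates, transplanted from Section~\ref{sect:torus}, should rule out every limit configuration except the $a$-times covered circle.

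\emph{Main obstacle.} Steps~3 and~4 carry the genuine difficulty. The combinatorial complexity of ``tangential $a$-bouquets'' in a fixed knot class is substantially richer than the one-parameter family occurring for $a=2$: deciding which bouquets are $C^1$-approachable by $\K$-representatives requires structural information on $\K$ that, unlike in the $\cT(2,b)$ case, is not packaged in a simple closed-form parametrization. Even more delicate is Step~4, where the crookedness-contradiction demands \emph{explicit} thin $a$-strand comparison curves with quantitative control of both bending energy and thickness, for \emph{every} admissible $\K$ with $\bra\K = \bri\K = a$. Without a concrete geometric description of such representatives in the vein of the torus-knot parametrization in Section~\ref{sect:torus}, producing the refined growth estimate driving the final contradiction is the principal hurdle to establishing the conjecture in full generality.
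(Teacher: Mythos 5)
The statement you are trying to prove is stated in the paper as a \emph{conjecture}; the authors offer no proof, and explicitly describe the general case as ongoing research. So there is no paper proof to compare against, and your proposal should be judged on its own terms: it is a reasonable research program modelled on the paper's treatment of $\cT(2,b)$, but it is not a proof, and the gaps are more severe than your closing paragraph suggests. The most fundamental one sits already in Step~2, not in Steps~3--4. To force $\kappa_{\g_0}\equiv 2\pi a$ you need the lower bound $\TC(\g_0)\ge 2\pi a$ for the \emph{limit} curve $\g_0$, which lies only in the $C^1$-closure of $\cC(\K)$ and is not embedded. The paper's Theorem~\ref{thm:fm} extends F\'ary--Milnor to this closure, but it only yields $\TC(\g)\ge 4\pi$, via Denne's alternating quadrisecants; it does not give $2\pi\bri\K$ for $\bri\K\ge 3$. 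Lower semicontinuity of $\TC$ under weak $H^2$-convergence goes the wrong way (it bounds $\TC(\g_0)$ from \emph{above} by $\liminf\TC(\g_{\th_i})$), so Milnor's bound on the approximating embedded curves does not transfer to the limit. Extending the bridge-index lower bound to non-embedded limit curves is an open problem in its own right, and without it the constant-curvature conclusion collapses for every $a\ge 3$.

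Step~3 contains a second genuine error: the claimed classification of the limit as a ``tangential bouquet of exactly $a$ full circles'' does not follow from the Schur-type rigidity argument. Proposition~\ref{prop:minarc} (adapted to curvature $2\pi a$) only says that a constant-curvature subarc with coinciding endpoints has length at least $\nfrac1a$, with equality iff it is a full circle. For $a=2$ this pins everything down because a single double point splits the unit-length loop into two arcs each of length exactly $\tfrac12$. For $a\ge 3$ a double point splits the loop into arcs of lengths $\ell$ and $1-\ell$ with only $\ell\in[\nfrac1a,1-\nfrac1a]$, a continuum of possibilities in which neither arc need be a closed circle; a closed curve of constant curvature $2\pi a$ with double points need not decompose into full circles at all. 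So the very shape of the candidate limit configurations is not established, and Step~4 (explicit comparison braids with quantitative thickness and bending-energy control, plus a crookedness estimate near each candidate) inherits this indeterminacy. Your Step~1 is essentially sound, but the remainder is a program, not a proof, and the statement remains a conjecture.
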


The shape of elastic knots for more general knot classes
is one of the topics of
 ongoing research.
Here we only mention a conjecture that
has personally been communicated to the third author
by  Urs Lang in 1997.

\begin{conjecture}[Spherical elastic knots]\label{conj:lang}
 Any elastic prime knot is a spherical or planar curve.
\end{conjecture}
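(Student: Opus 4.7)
The plan is to follow the template established for the $(2,b)$-torus knot case in the main body of the paper: pass to the limit $\th\to 0$ to obtain an elastic knot $\g_0$ in the $C^1$-closure of $\cC(\K)$ via Theorem~\ref{thm:existence-limit}, derive a sharp lower bound on $\Eb(\g_0)$ from topological invariants, and match it by explicitly constructed comparison curves that happen to lie on a round sphere. First I would combine Theorem~\ref{thm:existence-limit} with the extended F\'ary--Milnor bound in Theorem~\ref{thm:fm} and Cauchy--Schwarz to get $\Eb(\g_0)\ge (2\pi\bri\K)^2$ for unit loops. Since ropelength ceases to be active in the limit, on each open arc of $\g_0$ outside its self-contact set the curve should satisfy the classical free elastica equation; Langer--Singer rigidity then forces every such arc to lie on a Euclidean sphere, plane, or torus of revolution.

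Second, to exploit primeness, I would try to sharpen this lower bound into a form that can be saturated by spherical comparison curves. For composite knots one can essentially take a connect sum of several round components in parallel planes, whose bending energies simply add, and whose limit configurations are typically \emph{not} spherical. For prime $\K$, by contrast, the plan is that every upper bound construction inside $\cC(\K)$ optimizing the bending/bridge trade-off can be arranged on a single round sphere of suitably chosen radius; combined with lower-semicontinuity of $\Eb$ and the rigidity from step one, this would push $\g_0$ into that sphere (or, in degenerate cases such as $\bri\K=2$, into a plane, recovering Theorem~\ref{thm:main}).

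Third, to glue the free elastica arcs identified in step one into a single spherical curve, I would mimic the Schur-type argument of Proposition~\ref{prop:minarc}, using the $C^1$-regularity of $\g_0$ across the contact set to control arc length and displacement; the constant-curvature rigidity of Proposition~\ref{prop:constcurv} would deal directly with the planar degenerate case whenever the sharpened lower bound coincides with $(4\pi)^2$. A consistency check is that the whole programme reduces to the arguments of Sections~\ref{sect:mini}--\ref{sect:torus} when $\K=\cT(2,b)$.

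The principal obstacle is that no lower bound on $\inf_{\cC(\K)}\Eb$ is currently known that is simultaneously sharp and sensitive enough to distinguish prime from composite knots: Milnor's $(2\pi\bri\K)^2$ is almost certainly far from sharp for $\int\kappa^2$, and no construction of sphere-supported comparison curves saturating a plausible substitute is available for general prime knots. A secondary difficulty is that the self-contact set of $\g_0$ is controlled only weakly as $\th\to 0$, so passing from ``piecewise free elastica on some surface'' to ``globally spherical curve'' would require substantially new geometric input, presumably in the spirit of the Clarke-gradient analysis cited in the introduction. Given that the assertion has been an open conjecture since 1997, I expect this strategy to yield at best partial progress, perhaps restricted to prime knots of small bridge or braid index.
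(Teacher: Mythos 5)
The statement you were asked to prove is not proved in the paper at all: it is stated as an open conjecture (communicated by Urs Lang in 1997), supported only by numerical experiments, and the paper's actual theorems cover only the $(2,b)$-torus knot classes. So there is no ``paper proof'' to compare against, and your submission --- as you yourself concede in the final paragraph --- is a research programme rather than a proof. Judged as such, it contains genuine gaps at every one of its three steps. First, the lower bound $\Eb(\g_0)\ge(2\pi\bri\K)^2$ obtained from Theorem~\ref{thm:fm} (or rather its bridge-index generalization, which the paper only proves in the case $\bri\K=2$) and Cauchy--Schwarz is not known to be sharp for any knot class other than $\cT(2,b)$; indeed the equivalence \ref{item:class}$\Leftrightarrow$\ref{item:inf} of Theorem~\ref{thm:elastic-shapes} shows that for \emph{every} other non-trivial class the infimum of $\Eb$ is strictly larger than $(4\pi)^2$, and no replacement bound saturated by spherical curves is available. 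Second, the claim that $\g_0$ restricted to arcs off its self-contact set satisfies the free elastica equation is unsubstantiated: $\g_0$ is produced only as a weak $H^2$-limit of minimizers of the \emph{penalized} functionals $E_{\th_i}$, it is not itself known to be a critical point of $\Eb$ under any constraint, and the paper derives no Euler--Lagrange equation for it (the introduction explicitly flags the unknown contact set as the obstruction). Moreover, Langer--Singer rigidity classifies \emph{closed stable} elasticae, not open sub-arcs, so even granting criticality it would not force the arcs onto spheres or planes.

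Third, primeness enters your argument only through a heuristic about connected sums in parallel planes; nothing in that heuristic produces a mechanism that constrains a prime elastic knot to a sphere, and the Schur-type rigidity of Proposition~\ref{prop:minarc} is tied specifically to the constant-curvature value $4\pi$ arising from equality in the F\'ary--Milnor bound, which by the Main Theorem occurs \emph{only} for $\cT(2,b)$ --- precisely the classes where the conjecture degenerates to the planar case already settled by Theorem~\ref{thm:main}. In short, your outline correctly identifies the template of Sections~\ref{sect:mini}--\ref{sec:5} and correctly diagnoses why it does not extend, but it does not supply the missing sharp energy bounds or the missing regularity/criticality theory for $\g_0$, so the conjecture remains open.
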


Our numerical experiments (see Figures~\ref{fig:trefoil_sim} and
\ref{fig:general_knot_classes})
as well as the simulations performed by Gallotti and Pierre-Louis~\cite[Figs.\@ 6 \& 7]{gallotti-pierre-louis_2006}
seem
to support this conjecture.

\begin{figure}
\begin{center}
\begin{tabular}{cc}
\includegraphics[width=0.4\textwidth]{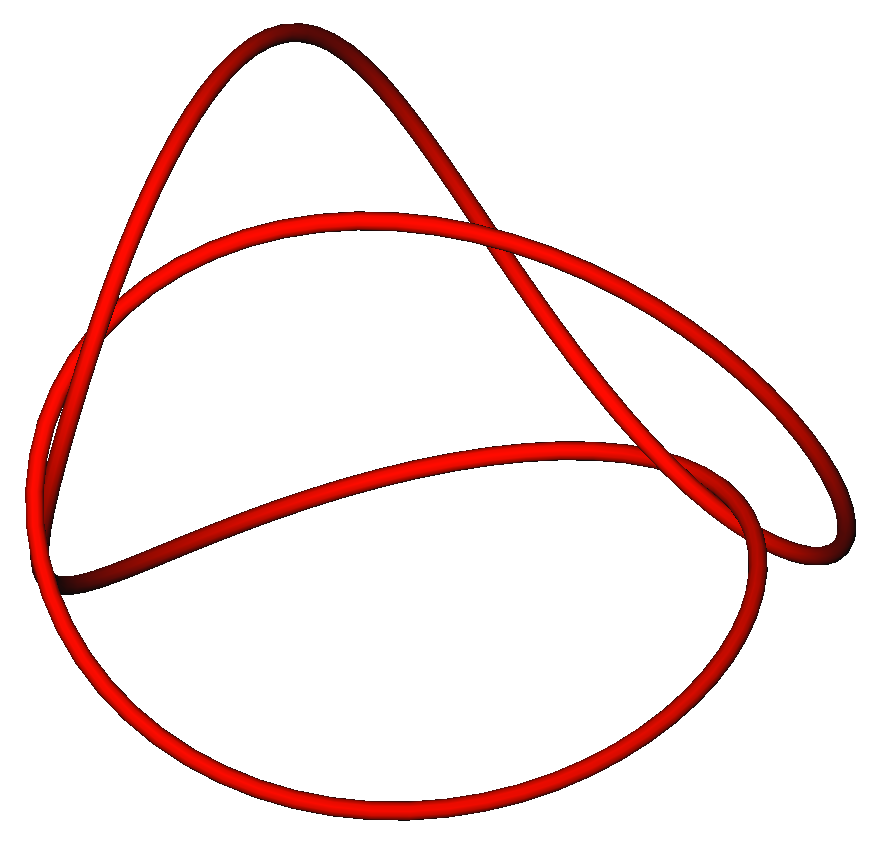} &
   \includegraphics[width=0.4\textwidth]{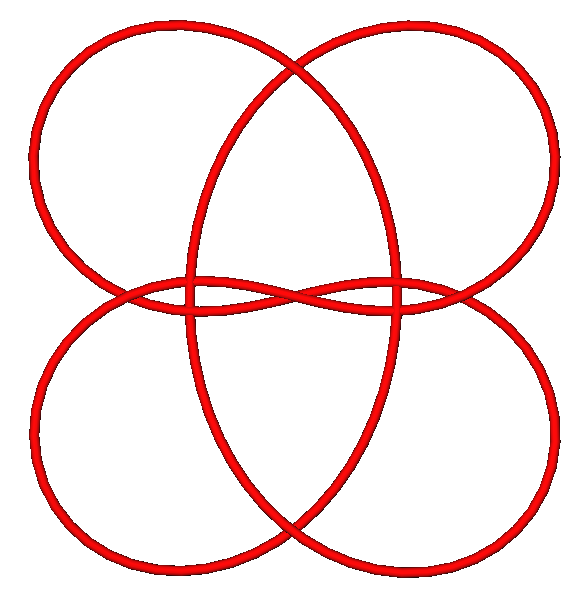} 
   \end{tabular}
   \end{center}
  \caption{Simulated annealing experiments for other knot classes.
  Left: The figure-eight knot ($4_{1}$) (resembling 
  the blue springy figure-eight in
   Figure \ref{fig:springy-knots}) seems to lie on a sphere. Right:
   A planar
    Chinese button knot ($9_{40}$) (in contrast to the spherical red springy wire
    in Figure \ref{fig:springy-knots} indicating that tying a more
    complex wire knot may impose some amount of physical
    torsion in addition to pure bending).}
 \label{fig:general_knot_classes}
\end{figure}

\paragraph{Acknowledgements.}

The second author was partially supported by DFG Trans\-regional Collaborative Research Centre SFB TR 71.
We gratefully acknowledge stimulating discussions with
Elizabeth Denne, Sebastian Scholtes, and John Sullivan.
We would like to thank 
Thomas~El~Khatib for bringing reference~\cite{gallotti-pierre-louis_2006}
to our attention.

\section{Existence of elastic knots}\label{sect:mini}

To ease notation we shall simply identify the intrinsic distance on $\R/\Z$ with
$|x-y|$, that is,
$$
|x-y|\equiv |x-y|_{\R/\Z}:=\min\{|x-y|,1-|x-y|\}.
$$
For any knot class $\K$ we define the class $\cC(\K)$ of {\it 
unit loops representing
$\K$} as
$$
\cC(\K):=\{\g\in H^2(\R/\Z,\R^3):\g(0)=0, |\g'|\equiv 1\,\ON\,\R/\Z,\textnormal{\,$\g$ is of knot type $\K$}\},
$$
where $H^2(\R/\Z,\R^3)$ denotes the class of $1$-periodic Sobolev functions whose
second weak derivatives are square-integrable. The bending energy \eqref{bending}
on the space of  curves
$\g\in H^{2}(\R/L\Z,\R^{3})$, $L>0$,  reads as
\begin{equation}\label{bending2}
\Eb(\gamma)=\int_{\g}\kappa^{2}\d s
=\int_{\R/L\Z}\kappa^{2}\abs{\dg}\d t
=\int_0^L\frac{\abs{\ddg\wedge\dg}^2}{\abs\dg^{5}}\d t,
\end{equation}
which reduces to the squared $L^2$-norm $\|\gamma''\|^2_{L^2}$ of the (weak) second derivative $\gamma''$ if
$\gamma$ is parametrized by arclength.
The ropelength functional defined as the quotient of length
and thickness simplifies on $\cC(\K)$ to
\begin{equation}\label{rope}
\Er(\gamma)=\frac{1}{\triangle[\g]},
\end{equation}
where thickness $\triangle[\g]$ can be expressed as in \eqref{thickness}.
For $\th >0$ we want to minimize  the \emph{total energy} $\Eth$
as given in \eqref{total-energy} on the class $\cC(\K)$ of unit loops
representing $\K$.
Note that, in contrast to the bending energy,
a  unit loop in $\cC(\K)$ has finite total energy $\Eth$ if and only if
it belongs to $C^{1,1}$, see~\cite[Lemma~2]{gmsm} and \cite[Theorem 1 (iii)]{SvdM}.
\begin{theorem}[Minimizing the total energy]\label{thm:existence-total}
For any fixed  tame knot class $\K$ and for
each $\th >0$ there exists a unit loop $\g_\th\in\cC(\K)$
such that 
\begin{equation}\label{mini-total}
\Eth(\g_\th)=\inf_{\cC(\K)}\Eth(\cdot).
\end{equation}
\end{theorem}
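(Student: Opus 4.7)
The plan is a direct-method argument in the calculus of variations. Since $\K$ is tame, $\cC(\K)\neq\emptyset$ (any tame knot class contains smooth representatives), and any smooth element has finite bending energy and strictly positive thickness, so $m:=\inf_{\cC(\K)}\Eth$ is finite. Pick a minimizing sequence $\seqn[n]{\g}\subset\cC(\K)$ with $\Eth(\g_n)\to m$. For $n$ large the bound $\Eth(\g_n)\le m+1$ yields the uniform a priori estimates $\Eb(\g_n)\le m+1$ and $\triangle[\g_n]\ge\th/(m+1)=:\delta>0$.

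Each $\g_n$ is arclength-parametrized with $\g_n(0)=0$, giving $\norm{\g_n}_{L^\infty}\le 1$, $\norm{\dg_n}_{L^\infty}=1$, and $\norm{\ddg_n}_{L^2}^2=\Eb(\g_n)\le m+1$. This is a uniform $H^2$-bound, so after passing to a subsequence one finds $\gth\in H^2(\R/\Z,\R^3)$ with $\g_n\rightharpoonup\gth$ weakly in $H^2$ and $\g_n\to\gth$ strongly in $C^1$. Passage to the limit on the level of $C^0$ (resp.\ $C^1$) gives $\gth(0)=0$ and $\abs{\dgth}\equiv 1$, so $\gth$ is an arclength-parametrized unit loop.

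Next I would identify $\gth$ as an element of $\cC(\K)$. The circumradius $R(x,y,z)$ in \eqref{thickness} is continuous where finite and tends to $+\infty$ on degenerate triples, which makes the infimum $\triangle[\cdot]$ upper semicontinuous under $C^0$-convergence; hence $\triangle[\gth]\ge\liminf_n\triangle[\g_n]\ge\delta$, and simultaneously $\Er(\gth)\le\liminf_n\Er(\g_n)$. Positive thickness endows $\gth$ with a uniform tubular neighborhood; for $n$ large the $C^1$-close curve $\g_n$ lies inside that tube, producing an ambient isotopy between $\g_n$ and $\gth$ (a standard consequence of positive thickness combined with $C^1$-convergence, cf.\ \cite{gmsm}), proving $\gth\in\cC(\K)$. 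Finally, $\Eb(\g)=\norm{\ddg}_{L^2}^2$ is convex and continuous on $H^2$, hence weakly lower semicontinuous, so $\Eb(\gth)\le\liminf_n\Eb(\g_n)$. Adding the two semicontinuity inequalities yields $\Eth(\gth)\le\liminf_n\Eth(\g_n)=m$, establishing \eqref{mini-total}.

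The subtlest step, and the principal obstacle, is the knot-class preservation: $C^1$-limits of embedded loops need not preserve knot type in general, but the uniform positive thickness granted by the finite contribution of $\th\Er$ provides a shared tubular neighborhood inside which a small $C^1$-perturbation becomes ambient-isotopic to the original loop. Without the ropelength penalty the limit $\gth$ could escape $\K$; this geometric rigidity is exactly what makes the constrained variational problem well-posed and is the reason both the compactness and the semicontinuity of $\Er$ go through.
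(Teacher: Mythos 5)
Your proposal is correct and follows essentially the same direct-method argument as the paper: uniform $H^2$-bounds from the energy, weak $H^2$/strong $C^1$ subsequential convergence, upper semicontinuity of thickness and weak lower semicontinuity of the bending energy, and preservation of the knot class via the stability of isotopy type under $C^1$-perturbations of an embedded (positively thick) curve. The only cosmetic difference is that you extract the positive lower bound $\triangle[\g_n]\ge\th/(m+1)$ along the sequence and pass it to the limit, whereas the paper first concludes $\Eth(\g_\th)\le\inf_{\cC(\K)}\Eth<\infty$ and then reads off $\triangle[\g_\th]>0$; both routes rest on the same cited semicontinuity facts.
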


\begin{proof}
The total energy is obviously nonnegative, and $\cC(\K)$ is not empty
since one may scale any smooth representative of $\K$ 
(which exists due to tameness) down to length
one and  reparametrize to arclength, so that the infimum in \eqref{mini-total}
is finite. Taking a minimal sequence $(\g_k)_k\subset\cC(\K)$ with
$\Eth(\gamma_k)\to\inf_{\cC(\K)}\Eth$ as $k\to\infty$ we get the uniform
bound
$$
\|\g_k''\|_{L^2}^{2}=\Eb(\g_k)\le\Eth(\g_k)\le 1+\inf_{\cC(\K)}\Eth<\infty\quad\Foa
k\gg 1,
$$
so that with $\g_k(0)=0$ and $|\g_k'|\equiv 1$ for all $k\in\N$ we have
a uniform bound on the full $H^2$-norm of the $\g_k$ independent of $k$ for
$k$ sufficiently large.
Since $H^2(\R/\Z,\R^3)$ as a Hilbert space is reflexive and $H^2(\R/\Z,\R^3)$ is compactly embedded in $C^1(\R/\Z,\R^3)$
this implies the existence
of some $\g_\th\in H^2(\R/\Z,\R^3)$ and
a subsequence $(\g_{k_i})_i\subset (\g_k)_k$ converging weakly in $H^2$ and
strongly in $C^1$ to $\g_\th$ as $i\to\infty.$
Thus we 
obtain $\g_\th(0)=0$ and $|\g_\th'|\equiv 1$ on $\R/\Z.$ Since thickness
is upper semicontinuous \cite[Lemma 4]{gmsm} and the bending energy lower semicontinuous with respect
to this type of  convergence  we arrive at
\begin{equation}\label{direct-method}
\Eth(\g_\th)\le\liminf_{i\to\infty}\Eth(\gamma_{k_i})=\inf_{\cC(\K)}\Eth(\cdot)<\infty.
\end{equation}
In particular by definition of $\Eth$, one has $\Er(\g_\th)<\infty$ or 
$\triangle[\g_\th]>0$, which implies by \cite[Lemma 1]{gmsm} that $\g_\th$
is embedded. 
As all closed curves in a $C^1$-neighbourhood of a given embedded curve
are isotopic, as shown by
Diao, Ernst, and Janse van Rensburg~\cite[Lemma~3.2]{dej},
see also~\cite{blatt:isot,reiter:isot},
we find that $\g_\th$ is of knot
type $\K$;  hence $\g_\th\in\cC(\K).$
This gives $\inf_{\cC(\K)}\Eth(\cdot)\le\Eth(\g_\th)$,
which in combination with \eqref{direct-method} concludes the proof.
\end{proof}

Since finite ropelength, i.e., positive thickness, implies
$C^{1,1}$-regularity we know that $\g_\th\in C^{1,1}(\R/\Z,\R^3)$.
However, we are not going to exploit this improved regularity, 
since we investigate the limit $\th\to 0$ and the 
corresponding limit configurations, the elastic knots $\g_0$ for
the given knot class $\K$.

\begin{theorem}[Existence of elastic knots]\label{thm:existence-limit}
Let $\K$ be any fixed  
tame knot class, $\th_i\to 0$ and $(\g_{\th_i})_{i}\subset\cC(\K)$, such that 
$E_{\th_i}(\g_{\th_i})=\inf_{\cC(\K)}E_{\th_i}(\cdot)$ for each $i\in\N.$ Then 
there exists $\g_0\in H^2(\R/\Z,\R^3)$ and a subsequence 
$(\g_{\th_{i_k}})_k\subset (\g_{\th_i})_{i}$ such that the $\g_{\th_{i_k}}$ 
converge weakly in $H^2$
and strongly in $C^1$ to $\g_0$ as $k\to\infty$. Moreover,
\begin{equation}\label{bendminimizing}
\Eb(\g_0)\le\Eb(\beta)\quad\Foa \beta\in\cC(\K).
\end{equation}
The estimate~\eqref{bendminimizing} is strict unless $\K$
is the unknot class.
\end{theorem}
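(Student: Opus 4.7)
The approach combines a standard direct-method extraction with a rigidity argument based on the Langer--Singer classification of stable elasticae. First I would establish a uniform $H^2$-bound on the sequence $\g_{\th_i}$ by comparing total energies against a fixed smooth competitor $\beta\in\cC(\K)$, available by tameness of $\K$. Minimality yields
$$\Eb(\g_{\th_i}) \le E_{\th_i}(\g_{\th_i}) \le E_{\th_i}(\beta) = \Eb(\beta) + \th_i\,\Er(\beta),$$
which stays uniformly bounded as $\th_i\to 0$. Combined with $\g_{\th_i}(0)=0$ and $|\g_{\th_i}'|\equiv 1$, this controls the full $H^2$-norm. Reflexivity of $H^2$ together with the compact embedding $H^2\hookrightarrow C^1$ then produces a subsequence converging weakly in $H^2$ and strongly in $C^1$ to some $\g_0\in H^2(\R/\Z,\R^3)$, and passing to the $C^1$-limit preserves the normalizations $\g_0(0)=0$, $|\g_0'|\equiv 1$ --- exactly as in the proof of Theorem~\ref{thm:existence-total}.

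Next I would establish the bending-energy bound. By lower semicontinuity of $\Eb$ with respect to weak $H^2$-convergence, together with $\th_{i_k}\Er(\beta)\to 0$,
$$\Eb(\g_0) \le \liminf_{k\to\infty}\Eb(\g_{\th_{i_k}}) \le \liminf_{k\to\infty}\bigl(\Eb(\beta)+\th_{i_k}\Er(\beta)\bigr) = \Eb(\beta),$$
and since $\beta\in\cC(\K)$ was arbitrary,~\eqref{bendminimizing} follows with $\Eb(\g_0)\le\inf_{\cC(\K)}\Eb$.

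For the strict inequality when $\K$ is non-trivial it suffices to prove that $\inf_{\cC(\K)}\Eb$ is \emph{not attained} inside $\cC(\K)$: every admissible $\beta$ then satisfies $\Eb(\beta)>\inf_{\cC(\K)}\Eb\ge\Eb(\g_0)$. Suppose on the contrary that some $\g^*\in\cC(\K)$ realizes the infimum. By the isotopy-invariance of the knot type under small $C^1$-perturbations of embedded curves (Diao--Ernst--Janse van Rensburg, already invoked in the proof of Theorem~\ref{thm:existence-total}), the set $\cC(\K)$ is $C^1$-open within the manifold of unit-speed loops through the origin. Consequently $\g^*$ is an unconstrained critical point of $\Eb$ subject only to the arclength constraint, i.e., a closed elastica in $\R^3$, and as a local $C^1$-minimum it is stable. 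The Langer--Singer classification~\cite{LS:cs} asserts that every stable closed elastica in $\R^3$ is a round circle. Since round circles represent the unknot, this contradicts $\K$ being non-trivial.

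The principal obstacle is this last step: one must carefully pass from the constrained minimization in $\cC(\K)$ to the classical Euler--Lagrange equation of an elastica and justify that local minimality in $C^1$ is enough to invoke the Langer--Singer stability dichotomy. Both points are essentially consequences of the $C^1$-openness of $\cC(\K)$ together with the standard first- and second-variation computation for $\Eb$ under the arclength constraint, but they need to be spelled out. The remainder of the argument is routine.
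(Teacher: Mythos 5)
Your subsequence extraction and your treatment of the strict inequality both follow the paper's own route: a uniform $H^2$-bound obtained by testing minimality against a fixed smooth competitor, weak $H^2$/strong $C^1$ compactness, lower semicontinuity of $\Eb$, and the Langer--Singer rigidity of stable closed elasticae to show that a non-trivial class cannot attain the infimum (your phrasing via non-attainment is logically equivalent to the paper's direct contradiction).

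There is, however, a genuine gap in the middle step. The chain $\Eb(\g_0)\le\liminf_{k}\br{\Eb(\beta)+\th_{i_k}\Er(\beta)}=\Eb(\beta)$ proves \eqref{bendminimizing} only for those $\beta\in\cC(\K)$ with $\Er(\beta)<\infty$, i.e.\ only for $\beta\in\cC(\K)\cap C^{1,1}\rzd$; for any other $\beta$ the middle term is identically $+\infty$ and the inequality is vacuous. Since $\cC(\K)$ is an $H^2$-class and an $H^2$ unit loop need not be $C^{1,1}$ (the paper notes that a unit loop has finite total energy $\Eth$ if and only if it is $C^{1,1}$), your ``since $\beta\in\cC(\K)$ was arbitrary'' does not follow, and the resulting bound $\Eb(\g_0)\le\inf_{\cC(\K)}\Eb$ --- which you also rely on for the strictness claim --- is not yet established. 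The paper closes exactly this gap with a density argument: an arbitrary $\beta\in\cC(\K)$ is approximated in $H^2$ by smooth curves, which for large index are shown to be regular, injective and of the same knot type (isotopy stability under $C^1$-convergence), and which after rescaling to unit length and reparametrizing to arclength lie in $\cC(\K)$ and still converge in $H^2$ (citing \cite[Thm.~A.1]{reiter:rkepdc}), so that their bending energies converge to $\Eb(\beta)$. You need to supply this approximation step, or an equivalent one, before quantifying over all of $\cC(\K)$; the rest of your argument then goes through.
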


\begin{definition}[Elastic knots]\label{def:elasticknot}
Any  curve $\g_0$ as in Theorem \ref{thm:existence-limit} is 
called an \emph{elastic knot for $\K$}.
\end{definition}

\begin{proof}[Theorem~\ref{thm:existence-limit}]
For any $\beta\in\cC(\K)$ and any $\th >0$
we can estimate
\begin{equation}\label{bending-estimate}
\Eb(\g_\th)\le\Eth(\g_\th)\le\Eth(\beta),
\end{equation}
where $\g_\th\in\cC(\K)$ is a global  minimizer of $\Eth$ within $\cC(\K)$ whose existence
is guaranteed by Theorem \ref{thm:existence-total}.
Now we restrict to $\beta\in\cC(\K)\cap C^{1,1}(\R/\Z,\R^3)$,
which implies by means of \cite[Theorem 1 (iii)]{SvdM}
that the right-hand side of~\eqref{bending-estimate}
is finite.
Now the right-hand side tends to $\Eb(\beta)<\infty$ as $\th\to 0$ and
we find a constant $C$ independent of $\th$ such that 
\begin{equation}\label{unifH2}
\|\g_\th\|_{H^2}\le  C\quad\Foa \th\in (0,1).
\end{equation}
In particular, this uniform estimate holds for the $\g_{\th_i}
\in\cC(\K)$ so that there
is $\g_0\in H^2(\R/\Z,\R^3)$ and a subsequence $(\g_{\th_{i_k}})_k\subset
(\g_{\th_i})_i$ with $\g_{\th_{i_k}}\rightharpoonup\g_0$ in $H^2$ and
$\g_{\th_{i_k}}\to\g_0$ in $C^1$ as $k\to\infty.$ The bending energy $\Eb$
is lower semicontinuous with respect to weak convergence in $H^2$ which implies
$$
\Eb(\g_0)\le\liminf_{k\to\infty}\Eb(\g_{\th_{i_k}})\overset{\eqref{bending-estimate}}{\le}\liminf_{k\to\infty}E_{\th_{i_k}}(\beta)=\Eb(\beta).
$$
Thus we have established~\eqref{bendminimizing}
for any $\beta\in\cC(\K)\cap C^{1,1}(\R/\Z,\R^3)$.
In order to extend it to the full domain,
we approximate an arbitrary $\beta\in\cC(\K)$ by a sequence of functions
$\seqn\beta\subset C^{\infty}\rzd$ with respect to the $H^{2}$-norm.
As $H^{2}$ embeds into $C^{1,1/2}$, the tangent vectors $\beta_{k}'$ uniformly
converge to $\beta'$, so $\abs{\beta_{k}'}\ge c>0$ for all $k\gg1$.

Furthermore, the $\beta_{k}$ are injective 
since for $\beta$ there are
positive constants $c$ and $\eps$ depending only on $\beta$ such
that 
$$
|\beta(s)-\beta(t)|\ge c|s-t|_{\R/\Z}\quad\Foa |s-t|_{\R/\Z}<\eps,
$$
because $|\beta'|\equiv 1$, and in consequence, there is another constant
$\delta=\delta(\beta)\in (0,c\eps]$ such that
$$
|\beta(s)-\beta(t)|\ge \delta\quad\Foa |s-t|_{\R/\Z}\ge\eps,
$$
for $\beta$ is injective. Consequently, 
for given distinct parameters
$s,t\in [0,1)$ we can estimate 
$$
|\beta_k(s)-\beta_k(t)|  \ge
|\beta(s)-\beta(t)|-2\|\beta_k'-\beta'\|_{L^\infty}|s-t|_{\R/\Z},
$$
which is positive for $k\gg 1$ independent of the intrinsic distance $|s-t|_{\R/\Z}$.
In addition, the $\beta_k$ represent the same
knot class $\K$ as $\beta$ does for $k\gg 1$,
since isotopy is stable under $C^1$-convergence
\cite{dej,blatt:isot,reiter:isot}.
According to~\cite[Thm.~A.1]{reiter:rkepdc}
the sequence $\seqn{\tilde\beta}$ of smooth curves,
where $\tilde\beta_{k}$ is obtained from $\beta_{k}$ (after omitting finitely
many $\beta_k$) by rescaling to
unit length and then reparametrizing to arc-length,
converges to $\tilde\beta=\beta$ with respect to the $H^{2}$-norm,
and, of course, $\tilde\beta_{k}\in\cC(\K)$ for all $k$.
We conclude
\[ \Eb(\g_0)\le\Eb(\tilde\beta_{k}) = \lnorm{\tilde\beta_{k}''}^{2}
\to\lnorm{\beta''}^{2} = \Eb(\beta). \]
Assume now $\Eb(\g_{0})=\Eb(\beta)$ for some $\beta\in\cC(\K)$ where
$\K$ is non-trivial. In this case, $\beta$ would be a local minimizer,
and therefore a stable closed elastica
as there are no restrictions for variations.  According to the
result of J.~Langer and D.~A.~Singer~\cite{LS:cs}\footnote{Recall
that  \emph{elasticae} are the critical curves for the bending
energy. Notice that Langer and
Singer work on the  tangent indicatrices  of arclength parametrized curves with
a fixed point, i.e., on {\it balanced curves} on $\S^2$ through a
fixed point, so that their
variational arguments can be applied to the tangent vectors of
curves in $\cC(\K)$; see \cite[p. 78]{LS:cs}.},
$\beta$ turns out to be the round circle, hence $\K$ is the unknot, contradiction.
\end{proof}

\section{The elastic unknot and  tangential pairs of circles}
\label{sec:shape}
The springy knotted wires strongly suggest that we should expect
that the elastic knots generally exhibit self-intersections, and this
is indeed the case unless the knot class is trivial. In fact, the elastic
unknot is the round circle of length one.
\begin{proposition}[Non-trivial elastic knots are not embedded]\label{prop:nonembedded}
The round circle of length one is the unique elastic unknot.
If there exists an embedded elastic knot $\g_0$ for a given  
knot class $\K$ then
$\K$ is the unknot (so that $\g_0$ is the round circle of length one).
In particular, if $\K$ is a non-trivial knot class, every elastic knot
for $\K$ must have double points.
\end{proposition}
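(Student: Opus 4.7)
The plan is to combine the minimizing property \eqref{bendminimizing} of elastic knots with the Langer--Singer classification of stable closed elasticae (already invoked in the proof of Theorem~\ref{thm:existence-limit}) and with the classical Fenchel inequality for closed curves.

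First I would prove the implication ``embedded elastic knot $\Rightarrow$ unknot class''. Let $\g_0$ be an elastic knot for $\K$ that happens to be embedded, and let $(\g_{\th_{i_k}})_k$ be the approximating sequence from Theorem~\ref{thm:existence-limit}, so that $\g_{\th_{i_k}}\to\g_0$ in $C^1$ with each $\g_{\th_{i_k}}\in\cC(\K)$. Since embedded curves form an open set in $C^1\rzd$ and ambient isotopy is stable under $C^1$-perturbations (Diao--Ernst--Janse van Rensburg, as already used in the proof of Theorem~\ref{thm:existence-total}), all $\g_{\th_{i_k}}$ for $k\gg1$ are isotopic to $\g_0$. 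Hence $\g_0$ inherits the knot type $\K$, i.e.\ $\g_0\in\cC(\K)$.

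By \eqref{bendminimizing}, $\g_0$ then minimizes $\Eb$ over the whole class $\cC(\K)$. Since $\g_0$ is an embedded unit loop, every sufficiently small $H^2$-variation still lies in $\cC(\K)$, so $\g_0$ is in particular a free critical point of $\Eb$ subject only to arc-length parametrization and closure, hence a stable closed elastica in $\R^3$. The Langer--Singer theorem~\cite{LS:cs}, precisely as applied at the end of the proof of Theorem~\ref{thm:existence-limit}, forces $\g_0$ to be the round circle; thus $\K$ is the unknot.

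For the uniqueness statement, let $\g_0$ be an arbitrary elastic unknot. Comparing with the round unit-length circle $\beta_0\in\cC(\textnormal{unknot})$ via \eqref{bendminimizing} gives
\[ \Eb(\g_0)\le\Eb(\beta_0)=(2\pi)^2. \]
On the other hand, $\g_0\in H^2\rzd$ is a closed arc-length curve, so $\kappa=\abs{\g_0''}\in L^2$, and Fenchel's inequality (which extends from the smooth case to $H^2$ by the standard smoothing argument already used in the proof of Theorem~\ref{thm:existence-limit}, or directly as a special case of Theorem~\ref{thm:fm}) yields $\int_0^1\kappa\d s\ge 2\pi$. Cauchy--Schwarz on the unit interval then gives
\[ \Eb(\g_0)=\int_0^1\kappa^2\d s\ge\left(\int_0^1\kappa\d s\right)^2\ge(2\pi)^2. \]
Equality must hold throughout, which forces $\kappa\equiv 2\pi$ a.e.; a closed arc-length $H^2$-curve of unit length with constant curvature $2\pi$ is the once-covered circle of radius $1/(2\pi)$. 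The final sentence of the proposition is then immediate: if $\K$ is non-trivial, no elastic knot for $\K$ can be embedded, for otherwise by the first part $\K$ would be the unknot.

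I do not expect any serious obstacle here: the only subtle point is the transfer of the knot type from the approximants to $\g_0$, which is exactly handled by the $C^1$-stability of isotopy already cited in Section~\ref{sect:mini}; everything else is a direct appeal to \eqref{bendminimizing}, the Langer--Singer classification, and Fenchel's inequality.
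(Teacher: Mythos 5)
Your argument for the implication ``embedded elastic knot $\Rightarrow$ unknot class'' is essentially the paper's own: $C^1$-stability of isotopy places $\g_0$ in $\cC(\K)$, \eqref{bendminimizing} then makes it a (local) minimizer of $\Eb$ there, and the Langer--Singer classification forces the round circle. For the uniqueness of the elastic unknot, however, you take a genuinely different route. The paper argues on the level of the approximants: it shows that the round circle of length one \emph{simultaneously} and uniquely minimizes $\Eb$ (Langer--Singer) and $\Er$ (via the uniqueness result of \cite[Lemma 7]{SvdM07} for the functionals $\cU_p$), hence uniquely minimizes $\Eth$ for \emph{every} $\th>0$ in the unknot class, so the minimizing sequence is constant and its $C^1$-limit is the circle. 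You instead work directly with the limit curve, testing \eqref{bendminimizing} against the round circle and combining it with Fenchel's inequality and Cauchy--Schwarz to force $\kappa\equiv 2\pi$ --- the same scheme the paper uses in Proposition~\ref{prop:constcurv} with $4\pi$ replaced by $2\pi$. This buys you independence from the ropelength uniqueness lemma, while the paper's argument yields the stronger by-product that the $\Eth$-minimizer is already the circle for each fixed $\th>0$. Two points to tighten: (i) Theorem~\ref{thm:fm} concerns non-trivial knot classes and gives the bound $4\pi$, so Fenchel for $H^2$-loops is not literally a special case of it (though it does follow from Milnor's inscribed-polygon characterization of total curvature, which the paper uses elsewhere); (ii) constant curvature $2\pi$ alone does not characterize the circle --- the paper itself cites Fenchel and McAtee for non-circular closed curves of constant curvature --- so your final identification needs either the rigidity in the equality case of Fenchel's theorem (total curvature exactly $2\pi$ forces a planar convex curve), which you do have since equality holds throughout your chain, or a rescaled version of Proposition~\ref{prop:minarc}.
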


\begin{proof}
The round circle of length one uniquely minimizes $\Eb$ and $\Er$ simultaneously
in the class of all arclength parametrized curves in $H^2(\R/\Z,\R^3).$
For the bending energy $\Eb$ this is true since the round once-covered circle
is the only stable closed
elastica in $\R^3$ according to
the work of J.~Langer and D.~A.~Singer \cite{LS:cs},
and for ropelength this follows, e.g., from 
the more general uniqueness result in \cite[Lemma 7]{SvdM07} for the functionals
$$
\cU_p(\gamma):=\br{\int_\gamma\sup_{v,w\in\R/\Z\atop u\not=v\not=w\not=u}R^{-p}(\gamma(u),\gamma(v),\gamma(w))\d u}^{1/p}, \quad p\ge 1.
$$
For closed rectifiable
curves $\gamma$ different from the round circle with $\Er(\gamma)<\infty$  
one has indeed
$$
2\pi=\Er(\textnormal{circle})=\cU_p(\textnormal{circle})\overset{\text{\cite[Lem.~7]{SvdM07}}}{<}\cU_p(\gamma)\le\Er(\gamma).
$$

Hence the round circle uniquely
minimizes also the total energy $\Eth$ 
for each $\th>0$ in $\cC(\K)$ when $\K$ is the unknot. Thus any elastic
unknot as the $C^1$-limit of $E_{\th_i}$-minimizers as $\th_i\to 0$
is also the round circle
of length one.

If $\g_0$ is embedded then according to the stability of isotopy classes
under $C^1$-perturbations (see \cite{dej,blatt:isot,reiter:isot}) we find
$\g_0\in\cC(\K)$ since $\g_0$ was obtained as the weak $H^2$-limit of
$E_{\th_i}$-minimizers
$\g_{\th_i}\in\cC(\K).$ 
Thus, by~\eqref{bendminimizing}, the curve $\g_0$ is a local  minimizer of $\Eb$ in $\cC(\K)$,
and therefore it is a stable elastica. Thus, again by the stability
result of Langer and Singer, $\g_0$ is the round circle of length one.
Consequently, $\K$
is the unknot, which proves the proposition.
\end{proof}

In the appendix we extend the F\'ary--Milnor theorem to the
$C^1$-closure of $\cC(\K)$, where $\K$ is a non-trivial knot class; see
Theorem \ref{thm:fm}. This allows us in a first step
to show that elastic knots for
$\K$ have constant curvature if one can get arbitrarily close with 
the bending energy in $\cC(\K)$  to the
natural lower bound $(4\pi)^2$ induced by F\'ary and Milnor.
\begin{proposition}[Elastic knots of constant curvature]\label{prop:constcurv}
For any knot class $\K$ with
\begin{equation}\label{infimaequal}
\inf_{\cC(\K)}\Eb=(4\pi)^2
\end{equation}
one has $\kappa_{\g_0} = 4\pi$  a.e.\@ on $\R/\Z$ for each elastic knot
$\g_0$ for $\K$. In particular, $\g_0\in C^{1,1}(\R/\Z,\R^3).$
\end{proposition}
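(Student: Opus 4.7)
The plan is to squeeze the bending energy of $\g_{0}$ between $(4\pi)^{2}$ and $(4\pi)^{2}$, and then read off the a.e.\@ constancy of the curvature from the equality case in Cauchy--Schwarz.

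For the upper bound, I would apply Theorem~\ref{thm:existence-limit} directly: the minimizing estimate~\eqref{bendminimizing} together with the hypothesis~\eqref{infimaequal} immediately yields
\[ \Eb(\g_{0}) \le \inf_{\cC(\K)}\Eb = (4\pi)^{2}. \]
Note that $\K$ is necessarily non-trivial here, since the round circle is a competitor in the unknot class with $\Eb = (2\pi)^{2} < (4\pi)^{2}$, so~\eqref{infimaequal} excludes the unknot automatically.

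For the matching lower bound on total curvature I would invoke the extended F\'ary--Milnor theorem (Theorem~\ref{thm:fm} in the Appendix). This is applicable because $\g_{0}$ sits in the $C^{1}$-closure of $\cC(\K)$: by Theorem~\ref{thm:existence-limit} it is the $C^{1}$-limit of minimizers $\g_{\th_{i_{k}}}\in\cC(\K)$, and we have just observed that $\K$ is non-trivial. It provides
\[ \int_{0}^{1}\kappa_{\g_{0}}\,\d s \ge 4\pi. \]
Since $\g_{0}$ is arclength parametrized ($|\g_{0}'|\equiv1$ being inherited from $C^{1}$-convergence), one has $\kappa_{\g_{0}} = |\g_{0}''|$ a.e., and the Cauchy--Schwarz inequality chains everything together:
\[ (4\pi)^{2} \le \left(\int_{0}^{1}\kappa_{\g_{0}}\,\d s\right)^{2} \le \int_{0}^{1}\kappa_{\g_{0}}^{2}\,\d s \cdot \int_{0}^{1}\d s = \Eb(\g_{0}) \le (4\pi)^{2}. \]

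All inequalities must therefore be equalities. Equality in Cauchy--Schwarz forces $\kappa_{\g_{0}}$ to be a.e.\@ constant, and matching the total curvature pins the constant to $4\pi$. Hence $|\g_{0}''| \equiv 4\pi$ a.e., so $\g_{0}''\in L^{\infty}(\R/\Z,\R^{3})$ and $\g_{0}\in W^{2,\infty}(\R/\Z,\R^{3}) = C^{1,1}(\R/\Z,\R^{3})$. The only non-routine ingredient in this argument is the F\'ary--Milnor bound in the $C^{1}$-closure (Theorem~\ref{thm:fm}, whose proof via alternating quadrisecants is the real work); once that is in hand, everything else is a clean Cauchy--Schwarz sandwich.
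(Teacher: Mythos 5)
Your argument is correct and coincides with the paper's own proof: both squeeze $\Eb(\g_0)$ between the extended F\'ary--Milnor bound (Theorem~\ref{thm:fm}) applied to $\g_0$ in the $C^1$-closure of $\cC(\K)$ and the minimizing property~\eqref{bendminimizing} combined with~\eqref{infimaequal}, then read off constancy of $\kappa_{\g_0}$ from the equality case in Cauchy--Schwarz (the paper phrases it as H\"older). Your explicit remarks that~\eqref{infimaequal} rules out the unknot and that $|\g_0''|\in L^\infty$ yields $C^{1,1}$ are small clarifications the paper leaves implicit, not a different route.
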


\begin{proof}
According to our generalized F\'ary--Milnor Theorem, Theorem \ref{thm:fm}
in the Appendix applied to $\g_0\in\overline{\cC(\K)}^{C^1}$, 
we estimate by means of H\"older's  inequality
\[
(4\pi)^2\overset{\eqref{eq:fm}}{\le} \left(\int_{\g_0}\kappa_{\g_0}\right)^2
\le\Eb(\g_0)\stackrel{\eqref{bendminimizing}}
\le \inf_{\cC(\K)}\Eb\stackrel{\eqref{infimaequal}}=(4\pi)^2,
\]
hence equality  everywhere. In particular, equality in H\"older's inequality
implies a constant integrand, which we calculate to be $\kappa_{\g_0}=4\pi$
a.e.\@ on $\R/\Z$.
\end{proof}

Of course, there are many closed curves of constant curvature,
see e.g.\@ Fenchel~\cite{fenchel1930},
even in every knot class,
see McAtee \cite{mcatee_2007}.
Closed spaces curves of constant curvature may, e.g.,
be constructed by joining suitable arcs of helices,
see Koch and Engelhardt~\cite{koch-engelhardt} for an explicit
construction and examples.
But in order to identify the possible shape of elastic knots  for
knot classes that satisfy assumption~\eqref{infimaequal}
recall that any minimizer in a non-trivial knot class has at least
one double  point; see Proposition
\ref{prop:nonembedded}. In addition, the length is fixed to one, so
that we can reduce the possible shapes of elastic knots considerably
by means of 
a Schur-type argument that connects length and constant curvature; see the following Proposition~\ref{prop:minarc}. This will in particular lead to the proof
of the classification result, Theorem~\ref{thm:elastic-shapes}.

\begin{proposition}[Shortest arc of constant curvature]\label{prop:minarc}
 Let $\g\in H^{2}([0,L],\R^{3})$, $L>0$, be parametrized by
  arc-length with constant curvature $\kappa = \abs{\ddg} =4\pi$ a.e.\@
   and coinciding endpoints $\g(0) = \g(L)$.
    Then $L\ge\frac12$ with equality if and only if $\g$ is a circle
     with radius $\frac1{4\pi}$.
     \end{proposition}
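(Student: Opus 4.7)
My plan is to argue via the tangent indicatrix $T := \g'$, which is a Lipschitz map $[0,L]\to\S^{2}$ satisfying $|T'| = |\g''| = 4\pi$ almost everywhere. Reparametrising by spherical arclength $\sigma := 4\pi s$ gives $T : [0,\ell] \to \S^{2}$ (with $\ell := 4\pi L$) of unit speed, and the hypothesis $\g(0) = \g(L)$ rewrites as
\[
0 \ = \ 4\pi\br{\g(L) - \g(0)} \ = \ \int_{0}^{\ell} T(\sigma) \d\sigma.
\]
The problem thus reduces to showing that any unit-speed Lipschitz curve $T : [0,\ell]\to\S^{2}$ with vanishing integral satisfies $\ell \geq 2\pi$, which is equivalent to $L\geq 1/2$.

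The key step is a clever choice of test vector, namely $n := T(\ell/2)\in\S^{2}$. If $\ell > 2\pi$ there is nothing to prove, so assume $\ell\leq 2\pi$ and therefore $\ell/2\leq \pi$. For each $\sigma\in [0,\ell]$, the restriction of $T$ to the interval between $\sigma$ and $\ell/2$ is a path on $\S^{2}$ of length $|\sigma - \ell/2|\leq\pi$ connecting $T(\sigma)$ to $n$, so the spherical distance satisfies $d_{\S^{2}}(T(\sigma),n) \leq |\sigma - \ell/2|$. Since $\cos$ is decreasing on $[0,\pi]$,
\[
\sp{T(\sigma), n} \ = \ \cos d_{\S^{2}}(T(\sigma), n) \ \geq \ \cos|\sigma - \ell/2|.
\]
Integrating over $[0,\ell]$ and using the vanishing-integral condition,
\[
0 \ = \ \int_{0}^{\ell}\sp{T(\sigma),n}\d\sigma \ \geq \ \int_{0}^{\ell}\cos|\sigma - \ell/2|\d\sigma \ = \ 2\sin(\ell/2),
\]
which forces $\sin(\ell/2)\leq 0$; combined with $\ell/2\in(0,\pi]$ this yields $\ell = 2\pi$, i.e., $L = 1/2$.

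In the equality case $L = 1/2$, each of the displayed inequalities must be an equality, whence $d_{\S^{2}}(T(\sigma), n) = |\sigma - \ell/2|$ for every $\sigma$. Hence $T$ restricted to each of $[0,\ell/2]$ and $[\ell/2,\ell]$ realises the spherical distance between its endpoints, so each restriction is a geodesic arc---a half great circle of length $\pi$. Both halves meet at $n$ and both terminate at the antipode $-n$, but they might a priori lie in different linear $2$-planes $\Pi_{1},\Pi_{2}\subset\R^{3}$ containing $n$. Writing out $\int_{0}^{\ell}T\d\sigma$ in coordinates adapted to $n$ and computing each half-circle contribution separately, however, produces orthogonal components that must cancel: the vanishing of the integral forces $\Pi_{1} = \Pi_{2}$. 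Therefore $T$ traces a single full great circle, $\g'$ takes values in one fixed $2$-plane, and $\g$ is a closed planar curve of constant curvature $4\pi$ and length $1/2$---necessarily a circle of radius $1/(4\pi)$.

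The main obstacle is finding the right test vector. The naive choice $n = T(0)$ delivers only $\ell\geq\pi$, i.e., $L\geq 1/4$, which is also what Schur's theorem (whose convexity hypothesis breaks down beyond $L=1/4$) or Fenchel's bound with a corner correction produces. The midpoint choice $n = T(\ell/2)$ is what doubles the bound and produces the sharp estimate. A minor technical point is that the $H^{2}$-regularity of $\g$ makes the osculating plane potentially discontinuous, but the argument circumvents any use of the Frenet frame and relies only on the intrinsic spherical distance on the Lipschitz tangent indicatrix.
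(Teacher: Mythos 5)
Your proof is correct, and although it rests on the same two pillars as the paper's argument --- passing to the tangent indicatrix $f=\g'\in H^1([0,L],\S^2)$ with $\int_0^L f=0$, and the comparison $\sp{f(\theta),f(t_0)}\ge\cos\br{4\pi\abs{\theta-t_0}}$ obtained by bounding spherical distance by the arclength of the indicatrix --- the way you exploit that comparison is genuinely different. The paper chooses the pivot $t_0=T$ to be a maximizer of $t\mapsto\abs{\int_0^t f}^2$, which yields the orthogonality relations $\sp{\int_0^T f,f(T)}=0=\sp{\int_T^L f,f(T)}$ and hence an auxiliary function $g(t)=\sign(t-T)\int_T^t\sp{f(\theta),f(T)}\d\theta$ vanishing at $0$ and $L$; the lower bound $g(t)\ge\tfrac1{4\pi}\sin(4\pi\abs{t-T})$, strictly positive on $(T-\tfrac14,T+\tfrac14)\setminus\set{T}$, then forces $T\ge\tfrac14$ and $L-T\ge\tfrac14$ separately. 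You instead take the pivot at the midpoint and test the full vector identity $\int_0^\ell f=0$ against the single direction $\nu=f(\ell/2)$ in one stroke, obtaining $0\ge2\sin(\ell/2)$ directly; this dispenses with the extremal point, the orthogonality relations, and the auxiliary function, at the mild cost of splitting off the trivial case $\ell>2\pi$ beforehand. The equality analyses are essentially identical in both arguments (two minimizing great semicircles joining antipodal points, forced onto one great circle by the vanishing integral), and your sketch of the final cancellation of the components orthogonal to $\nu$ is no less explicit than the paper's one-line contradiction. What the paper's variant buys is the slightly sharper localization that \emph{both} subarcs emanating from the farthest point have length at least $\tfrac14$; that refinement is never used, so nothing is lost downstream --- Corollary~\ref{cor:tg8} only invokes the statement of Proposition~\ref{prop:minarc} exactly as you prove it.
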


Before proving this rigidity result let us note an immediate consequence.

\begin{corollary}[Tangential pairs of circles]\label{cor:tg8}
Every
closed arclength parametrized curve $\gamma\in H^2(\R/\Z,\R^3)$ with at least 
one double point and with
constant curvature $\kappa_\g=|\g''| = 4\pi$ a.e.\@ on $\R/\Z$,
is a \emph{tangential
pair of circles}. That is, $\gamma $ belongs, up to isometry and parametrization, to
 the one-parameter family of tangentially intersecting circles
 \begin{equation}\label{eq:tg8}
  \tge_\varphi:t\mapsto\tfrac1{4\pi}
  \begin{cases}
   \uni1(1-\cos(4\pi t)) - \uni2\sin(4\pi t), & t\in [0,\tfrac12], \\
   (\uni1\cos\varphi + \uni3\sin\varphi)(1-\cos(4\pi t)) - \uni2\sin(4\pi t), & t\in [\tfrac12,1],
  \end{cases}
 \end{equation}
 where $\varphi\in [0,\pi]$.
 Here $\uni k$ denotes the $k$-th unit vector in $\R^{3}$, $k=1,2,3$.
\end{corollary}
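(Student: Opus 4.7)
The plan is to split $\gamma$ at the double point and apply the rigidity statement of Proposition~\ref{prop:minarc} to each of the two resulting arcs. Pick $s_1<s_2\in[0,1)$ with $p:=\gamma(s_1)=\gamma(s_2)$. Both restrictions $\gamma|_{[s_1,s_2]}$ and $\gamma|_{[s_2,s_1+1]}$ (the latter via the obvious periodic reparametrization) are $H^2$-arcs, arc-length parametrized, with constant curvature $4\pi$ a.e.\ and coinciding endpoints at $p$, so Proposition~\ref{prop:minarc} forces their lengths to be at least $\tfrac12$ each. Since the two lengths sum to $1$, each equals exactly $\tfrac12$, and the equality clause of the proposition identifies each arc as a circle of radius $\tfrac1{4\pi}$; as the arc-length matches the circumference $2\pi\cdot\tfrac1{4\pi}=\tfrac12$, each arc is traversed exactly once.

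Next, I would use $C^1$-regularity to match tangents at $p$: the Sobolev embedding $H^2\hookrightarrow C^1$ makes $\gamma'$ continuous on $\R/\Z$, and a circle of circumference $\tfrac12$ traversed once in arc-length returns with the same tangent direction, so $\gamma'(s_1)=\gamma'(s_2)$. Thus both circles share a common tangent line at $p$, and $\gamma$ is a tangential pair of circles of radius $\tfrac1{4\pi}$.

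Finally, I would bring the configuration into the canonical form~\eqref{eq:tg8}. Post-compose with the rigid motion sending $p$ to $0$, the common tangent line to $\Span\{\uni 2\}$, and the plane of the first circle to $\Span\{\uni 1,\uni 2\}$ with center at $\tfrac1{4\pi}\uni 1$; shift the parameter so that $s_1=0$ and, if needed, reverse orientation to arrange $\gamma'(0)=-\uni 2$. A direct check shows the first arc then coincides with the first piece of~\eqref{eq:tg8}. The center of the second circle lies on the unit circle in the plane perpendicular to $\uni 2$, hence equals $\tfrac1{4\pi}(\uni 1\cos\varphi+\uni 3\sin\varphi)$ for a unique $\varphi\in[0,2\pi)$; the reflection $\uni 3\mapsto-\uni 3$ fixes the normalized first arc and replaces $\varphi$ by $-\varphi$, so we may assume $\varphi\in[0,\pi]$, whereupon a direct computation matches the second arc with the second piece of~\eqref{eq:tg8}.

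The main obstacle is the bookkeeping in the last step: the second circle could a~priori bend toward either of two antipodal points, and its parametrization could a~priori run in either of two directions. The first ambiguity is absorbed by letting $\varphi$ range over $[0,2\pi)$ and is then cut down to $[0,\pi]$ via the reflection $\uni 3\mapsto-\uni 3$; the second is pinned down by $\gamma'(\tfrac12^+)=\gamma'(\tfrac12^-)=-\uni 2$, which fixes both the initial tangent of the second arc and its orientation.
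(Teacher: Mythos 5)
Your proof is correct and follows essentially the same route as the paper: apply Proposition~\ref{prop:minarc} to the two arcs determined by a double point to force each to have length exactly $\tfrac12$ and hence, by the equality clause, to be a circle of radius $\tfrac1{4\pi}$, then use the continuity of $\gamma'$ coming from the Sobolev embedding to obtain tangency at the double point. The paper's own proof is terser and leaves the final normalization to the canonical form~\eqref{eq:tg8} implicit, whereas you spell it out; this is a difference only in level of detail, not of method.
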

Note that $\tge_0$ is a doubly covered circle and $\tge_\pi$ a tangentially 
intersecting planar figure eight,  both located in the plane spanned by
$\uni1$ and $\uni2$. For intermediate values $\varphi\in 
(0,\pi)$ one obtains a configuration as shown in Figure~\ref{fig:intermediate}.

\begin{figure}
 \centering
  \includegraphics{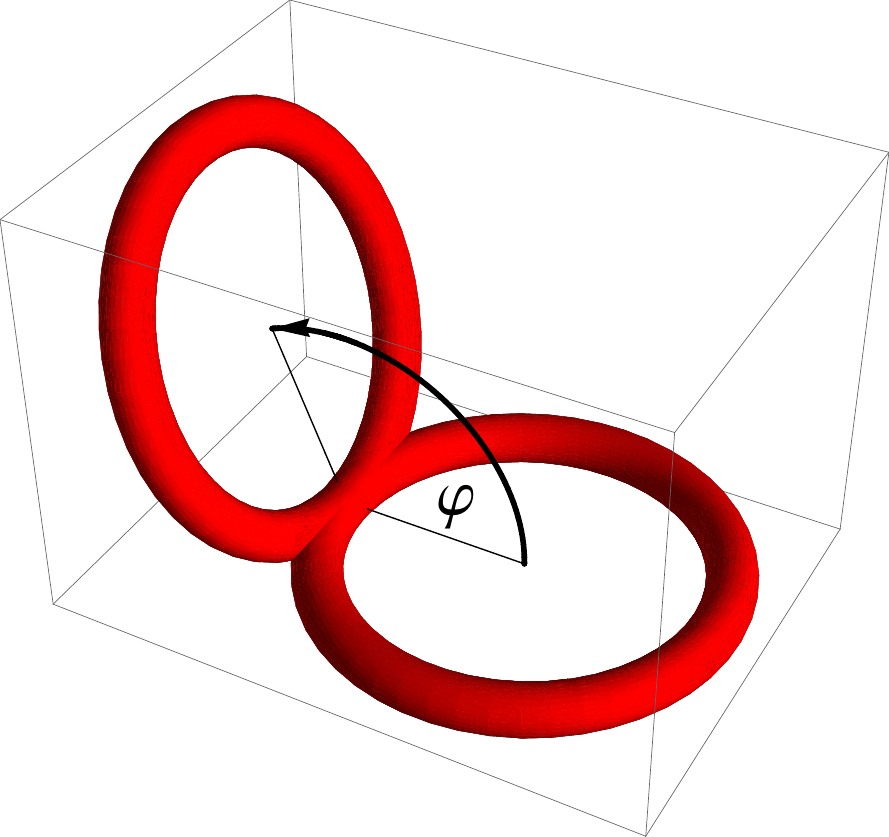}
    \caption{Plot of $\tge_\varphi$ for $\varphi={\nfrac{5\pi}8}$}\label{fig:intermediate}
     \end{figure}

\begin{proof}[Corollary~\ref{cor:tg8}]
Applying Proposition~\ref{prop:minarc}
     we find that the length of any arc starting from a double point 
     amounts to at least $\tfrac12$.
      As $\length(\g)=1$ we have precisely two arcs of 
      length~$\tfrac12$ between each double point
       which again by Proposition~\ref{prop:minarc} implies that both these
       connecting arcs are 
       circles of radius $\tfrac1{4\pi}$.
        They have to meet tangentially due to the embedding 
	$H^{2}\hookrightarrow
	 C^{1,1/2}$.
	  Thus $\g=\tge_{\varphi}$ for some
	   $\varphi\in [0,\pi]$.
\end{proof}

\begin{proof}[Proposition~\ref{prop:minarc}]
Obviously, the statement is equivalent to minimizing $L>0$ over 
$f = \dg \in H^{1}([0,L],\mathbb S^{2})$ with $\abs{f'} = 4\pi$ a.e.\@
and $\int_{0}^{L}f=0$. 
 As $f\not\equiv0$ there is some $T\in(0,L)$ maximizing $t\mapsto \abs{\int_{0}^{t}f(\theta)\d\theta}^{2}$ which leads to
 \[ 0 =  2\sp{\int_{0}^{T}f(\theta)\d\theta,f(T)}
      = -2\sp{\int_{T}^{L}f(\theta)\d\theta,f(T)}. \]
 We consider
 \[ g : t\mapsto\sign(t-T)\sp{\int_{T}^{t}f(\theta)\d\theta,f(T)}
              = \sign(t-T)\int_{T}^{t}\sp{f(\theta),f(T)}\d\theta \]
 for $t\in[0,L]$. By assumption we have $g(0)=g(L)=0$.
 As $f$ takes its values in the sphere $\mathbb S^{2}$
 and moves with constant speed, we can estimate
 \begin{equation}\label{eq:angle}
  \angle\br{f(\theta),f(T)} \le \length(f|_{[\theta,T]})=
  \int_\theta^T|f'(\tau)|\,d\tau = 4\pi\abs{\theta-T}.
 \end{equation}
 As long as $\theta \in [T-\frac14,T+\frac14]$ we obtain by monotonicity of
 the cosine function
 \begin{equation}\label{eq:cosle}
  \cos(4\pi\abs{\theta-T}) \le \cos\angle\br{f(\theta),f(T)}
    = \sp{f(\theta),f(T)}.
 \end{equation}
So, for $t\in [T-\frac14,T+\frac14]$, as cosine is even,
 \begin{equation}\label{eq:gge}
  \begin{split}
   g(t) &\ge \sign(t-T)\int_{T}^{t}\cos(4\pi\br{\theta-T})\d\theta
         =   \left.\tfrac1{4\pi}{\sign(t-T)}\sin(4\pi\br{\theta-T})\right|_{T}^{t} \\
        &=   \tfrac{1}{4\pi}\sin(4\pi\abs{t-T}).
  \end{split}
 \end{equation}
 The right-hand side is positive for $t\in(T-\frac14,T)\cup(T,T+\tfrac14)$
 and vanishes if $t\in\set{0,T\pm\frac14}$.
 Now $g(0)=g(L)=0$ together with $0<T<L$ yields 
 \[ 0,L \notin (T-\tfrac14,T+\tfrac14), \]
 and one has  $0\le T-\frac14<T+\frac14\le L$,
 and therefore $T\ge \tfrac14$ as well as
 \[ L \ge T+\tfrac14 \ge \tfrac12. \]
 The  case $L=\frac12$ enforces $T=\frac14$, and inserting
$t=L=\frac12$ in~\eqref{eq:gge} we arrive at
 \[ 0 = \int_{1/4}^{1/2}\sp{f(\theta),f(\tfrac14)}\d\theta 
 \ge \int_{1/4}^{1/2}\cos\br{4\pi\br{\theta-\tfrac14}}\d\theta = 0, \]
 thus
 \[ \int_{1/4}^{1/2}\sq{\sp{f(\theta),f(\tfrac14)}-
 \cos\br{4\pi\br{\theta-\tfrac14}}}\d\theta = 0. \]
 The integrand is non-negative by~\eqref{eq:cosle}, so it vanishes a.e. on
 $[\frac14,\frac12]$.
 By continuity we obtain equality in~\eqref{eq:cosle} for any $\theta\in[\frac14,
 \frac12]$.
 Similarly, inserting $t=0$ in \eqref{eq:gge} we obtain
 equality in~\eqref{eq:cosle}
 also for $\theta\in[0,\tfrac14]$, whence for all $\theta\in [0,\frac12]$,
 and subsequently equality in~\eqref{eq:angle}
 for all $\theta\in [0,\frac12].$
 We especially face
 $\angle\br{f(0),f(\frac14)} = \angle\br{f(\frac14),f(\frac12)} = \pi$.
 So $f$ joins two antipodal points by an arc of length $\pi$.
 Therefore both $f|_{[0,1/4]}$ and $f|_{[1/4,1/2]}$ are
great semicircles connecting
$f(0)$ and $f(\frac14)$ resp.\@ $f(\frac14)$ and $f(\frac12)$, and
if these two great semicircles did not belong to the same great circle then
 $\int_0^Lf\not=0$, contradiction.
  Note that $t\mapsto \int_{0}^{t}f(\theta)\d\theta$ is also a circle
 as $f$ parameterizes a great circle on $\S^2$ with constant speed $4\pi$ a.e.
\end{proof}


\section{Tangential pairs of circles identify $(2,b)$-torus knots}\label{sec:class}

We have seen in the previous section that elastic knots are restricted to
the one-parameter family \eqref{eq:tg8} of tangential pairs of circles, if $(4\pi)^2$
is the sharp lower bound for the bending energy on $\cC(\K)$; see \eqref{infimaequal}.
 And since
elastic knots by definition lie in the $C^1$-closure of $\cC(\K)$ one can
ask the question, whether the existence of tangential pairs of circles in that $C^1$-closure
determines the knot class $\K$ in any way.  This is indeed the case, $\K$ turns
out to be either unknotted or a $(2,b)$-torus knot for odd $b$ with $|b|\ge 3$ as will be
shown in Proposition \ref{prop:braids}. The following preliminary construction of
sufficiently small cylinders containing the self-intersection of a given tangential
pair of circles in Lemmata \ref{lem:lgr} and \ref{lem:zylinder} as well as the explicit isotopy constructed in Lemma \ref{lem:isohandle}
do not only prepare the proof of Proposition \ref{prop:braids} but are also the foundation
for the argument to single out the twice-covered circle as the only possible shape
of an elastic knot in $\cT(2,b)$ in Section \ref{sec:5}.

We intend to characterize the situation of
a curve $\g$ being close to $\tge_{\varphi}$ for $\varphi\in(0,\pi]$
with respect to the $C^{1}$-topology, i.e.,
\[ \norm{\g-\tge_{\varphi}}_{C^{1}}\le\delta \]
where $\delta>0$ will depend on some $\eps>0$.

To this end, we consider a cylinder $\mathcal Z$
around the intersection point of
the two circles of $\tge_{\varphi}$ (which is the origin), see Figure~\ref{fig:zylinder}.
Its axis will be parallel to the tangent line (containing $\mathbf e_{2}$)
and centered at the origin. More precisely,
\begin{equation}\label{grundzylinder}
 \mathcal Z := \sett{(x_{1},x_{2},x_{3})^{\top}\in\R^{3}}{x_{1}^{2}+x_{3}^{2}\le\zeta^{2},\abs{x_{2}}\le\eta},
\end{equation}
where
\[ \eta:=\sqrt{\nfrac\zeta{({8\pi)}}}
 \qquad\text{for some }\zeta\in\left(0,\tfrac1{32\pi}\right]. \]
This will produce a ``braid representation'' shaped form of the knot $\g$.
(See, e.g., Burde and Zieschang~\cite[Chap.~2~D, 10]{BZ}
for information on braids and their closures.)
Outside $\mathcal Z$ the curve $\gamma$ consists of two ``unlinked'' handles
which both connect the opposite caps of $\mathcal Z$.
Inside $\mathcal Z$ it consists of two sub-arcs $\tg_{1}$ and $\tg_{2}$ of $\g$
which can be reparametrized as graphs over the $\mathbf e_{2}$-axis
(thus also entering and leaving at the caps).

As follows, the knot type of $\g$ can be analyzed by studying the
over- and under-crossings inside $\mathcal Z$
viewed under a suitable projection.
Due to the graph representation, each fibre
\[ \mathcal Z_{\xi}:=\mathcal Z\cap\br{\xi+\mathbf e_{2}^{\perp}} = \sett{(x_{1},\xi,x_{3})^{\top}\in\R^{3}}{x_{1}^{2}+x_{3}^{2}\le\zeta^{2}}, \qquad \xi\in\sq{-\tfrac1{16\pi},\tfrac1{16\pi}}, \]
is transversally met by
precisely one point $\tg_{1}(\xi)$, $\tg_{2}(\xi)$ of each arc.
By embeddedness of $\g$, this defines a vector
\begin{equation}\label{eq:a-xi}
 a_{\xi} := \tg_{1}(\xi)-\tg_{2}(\xi)\in\mathbf e_{2}^{\perp} 
\end{equation}
of positive length.
Set 
\begin{equation}\label{eq:nu}
\nu:=\mathbf e_{1}\cos\nfrac\varphi2+\mathbf e_{3}\sin\nfrac\varphi2.
\end{equation}
Since both, $\nu$ and $a_\xi$ for every $\xi\in [-\eta,\eta]$, are
contained in the plane $\mathbf e_2^\perp$, we may write
\begin{equation}\label{betadef}
a_\xi=
\left(\begin{array}{ccc}
\cos\beta(\xi) & 0 & -\sin\beta(\xi)\\
0 & 1 & 0 \\
\sin\beta(\xi) & 0 & \cos\beta(\xi)\end{array}\right)\nu
=\mathbf e_{1}\cos\br{\nfrac\varphi2+\beta(\xi)}+\mathbf e_{3}\sin\br{\nfrac\varphi2+\beta(\xi)},
\end{equation}
which defines
a continuous function $\beta\in C^0([-\eta,\eta])$ measuring
the angle between $\nu$ and $a_\xi$ for $\xi\in [-\eta,\eta].$
This function is uniquely defined if we additionally set
$\beta(-\eta)\in [0,2\pi)$, and it traces possible multiple rotations of the 
vector $a_\xi$ about the $\mathbf e_2$-axis as $\xi$ traverses the
parameter range from $-\eta$ to $+\eta$. So $\beta(\xi)$ should
{\it not} be considered an element of $\R/2\pi\Z$.

Choosing $\delta$ sufficiently small,
it will turn out that the difference of the $\beta$-values
at the caps of the cylinder $\mathcal{Z}$, i.e., 
\[ \Delta_{\beta}\equiv \Delta_{\beta,\eta} := \beta(\eta)-\beta(-\eta) \]
captures the essential topological information of the knot type of $\g$.
(Since $\eta$ will be fixed later on we may as well suppress the dependence
of $\Delta_\beta$ on $\eta.$)

In order to make these ideas more precise, we first introduce
a local graph representation of $\g\cap\mathcal Z$ in Lemma~\ref{lem:lgr} below.
Then we employ an isotopy that maps $\g$ outside $\mathcal Z$
to $\tge_{\varphi}$, see Lemma~\ref{lem:isohandle}.
Proposition~\ref{prop:braids} will characterize the knot type of $\g$ assuming
that the opening angle $\varphi$ of the tangential pair of circles $\tge_\varphi$
in \eqref{eq:tg8} is different from zero, and
in Corollary~\ref{cor:braids} we state the
corresponding result for $\varphi=0$.

The reparametrization can explicitly be written for
$\tge_{\varphi}$, $\varphi\in[0,\pi]$. In fact,
letting
$\Tge_{\varphi,1}(\xi) :=  \tge_{\varphi}\br{\frac{\arcsin(4\pi\xi)}{4\pi}}$
and
$\Tge_{\varphi,2}(\xi) :=  \tge_{\varphi}\br{\frac{\arcsin(4\pi\xi)}{4\pi}+\frac12}$
we arrive at
$\Tge_{\varphi,j}(\xi)\in-\xi\mathbf e_{2}+\mathbf e_{2}^{\perp}$
for all $\xi\in[-\frac{1}{4\pi},\frac{1}{4\pi}]$ and $j=1,2$.

\begin{lemma}[Local graph representation]\label{lem:lgr}
 Let $\varphi\in[0,\pi]$, $\eps>0$, and $\g\in C^{1}\rzd$ with
 \begin{equation}\label{eq:lgr}
  \norm{\g-\tge_{\varphi}}_{C^{1}}
  \le\delta=\delta_{\eps}:=\min\br{\tfrac\eps{42},\tfrac1{16\pi}}.
 \end{equation}
 Then there are two diffeomorphisms $\phi_{1},\phi_{2}\in C^{1}([-\frac{1}{16\pi},\frac{1}{16\pi}])$
 such that, for $j=1,2$,
 \[ \tg_{j}(\xi):=\g(\phi_{j}(\xi))\in-\xi\mathbf e_{2}+\mathbf e_{2}^{\perp}\qquad\text{for all } \xi\in[-\tfrac{1}{16\pi},\tfrac{1}{16\pi}] \]
 and
 \begin{equation*}
  \norm{\tg_{j}-\Tge_{\varphi,j}}_{C^{1}([-\frac{1}{16\pi},\frac{1}{16\pi}])} \le \eps.
 \end{equation*}
\end{lemma}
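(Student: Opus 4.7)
The plan is to realize each $\phi_{j}$ as the local inverse of the ``second-coordinate'' map attached to $\g$, and then to compare with the analogous inverse for $\tge_{\varphi}$ by a standard perturbation argument.

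First I would record the reference computation. For the first arc of $\tge_{\varphi}$ one has $-\sp{\tge_{\varphi}(t),\uni2}=\sin(4\pi t)/(4\pi)$, and for the second arc the same identity holds after shifting by $\tfrac12$; in particular $\Tge_{\varphi,j}$ is, by direct substitution, the graph reparametrization of the $j$-th branch of $\tge_{\varphi}$ over the $-\xi\uni2+\uni2^{\perp}$-foliation. Moreover $-\sp{\dot{\tge}_{\varphi}(t),\uni2}=\cos(4\pi(t-\tfrac{j-1}{2}))\ge\cos(\tfrac14)>\tfrac{15}{16}$ for $|t-\tfrac{j-1}{2}|\le\tfrac1{16\pi}$, and $\dot{\tge}_{\varphi}$, although only $C^{0,1}$ at the junctions, is globally Lipschitz with constant $4\pi$ since each circular arc has curvature $4\pi$ and the tangent vectors match at the double point.

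Next I would define $F_{j}(t):=-\sp{\g(t),\uni2}$ on intervals around $0$ (for $j=1$) and $\tfrac12$ (for $j=2$). From $\norm{\g-\tge_{\varphi}}_{C^{1}}\le\delta\le\tfrac1{16\pi}$ and the reference bound, $F_{j}'(t)\ge\tfrac{15}{16}-\delta>\tfrac12$ on an interval of radius slightly larger than $\tfrac1{16\pi}+\delta$ about $\tfrac{j-1}2$, so $F_{j}$ is a strict $C^{1}$-diffeomorphism there. Because $|F_{j}(\tfrac{j-1}2)|\le\delta\le\tfrac1{16\pi}$, its image covers $[-\tfrac1{16\pi},\tfrac1{16\pi}]$, and I set $\phi_{j}:=F_{j}^{-1}|_{[-1/(16\pi),1/(16\pi)]}$. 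By construction $\tg_{j}(\xi):=\g(\phi_{j}(\xi))$ has second coordinate $-\xi$, proving the first assertion.

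For the $C^{1}$-closeness let $\phi_{j}^{\mathrm{ref}}$ denote the corresponding inverse for $\tge_{\varphi}$, so that $\Tge_{\varphi,j}(\xi)=\tge_{\varphi}(\phi_{j}^{\mathrm{ref}}(\xi))$. From $F_{j}(\phi_{j}(\xi))=\xi=F_{j}^{\mathrm{ref}}(\phi_{j}^{\mathrm{ref}}(\xi))$, adding and subtracting $F_{j}(\phi_{j}^{\mathrm{ref}}(\xi))$ and using $|F_{j}-F_{j}^{\mathrm{ref}}|_{C^{0}}\le\delta$ together with $F_{j}'\ge\tfrac12$ yields $|\phi_{j}-\phi_{j}^{\mathrm{ref}}|_{C^{0}}\le 2\delta$. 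The $C^{0}$-bound for $\tg_{j}-\Tge_{\varphi,j}$ then follows from splitting
\[ \tg_{j}(\xi)-\Tge_{\varphi,j}(\xi)=\bigl[\g(\phi_{j}(\xi))-\tge_{\varphi}(\phi_{j}(\xi))\bigr]+\bigl[\tge_{\varphi}(\phi_{j}(\xi))-\tge_{\varphi}(\phi_{j}^{\mathrm{ref}}(\xi))\bigr], \]
which is bounded by $\delta+1\cdot 2\delta=3\delta$ (using $|\dot{\tge}_{\varphi}|\equiv1$). For the derivatives I would use $\phi_{j}'=1/F_{j}'(\phi_{j})$ and $\phi_{j}^{\mathrm{ref}\,\prime}=1/F_{j}^{\mathrm{ref}\,\prime}(\phi_{j}^{\mathrm{ref}})$; combining the above $C^{0}$-bound on $\phi_{j}$ with $|F_{j}'-F_{j}^{\mathrm{ref}\,\prime}|_{C^{0}}\le\delta$, the $4\pi$-Lipschitz bound on $\dot{\tge}_{\varphi}$, and the lower bound $F_{j}',F_{j}^{\mathrm{ref}\,\prime}\ge\tfrac12$ produces a universal bound $|\phi_{j}'-\phi_{j}^{\mathrm{ref}\,\prime}|\le C_{1}\delta$. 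Finally, writing
\[ \tg_{j}'(\xi)-\Tge_{\varphi,j}'(\xi)=\bigl[\g'(\phi_{j}(\xi))-\dot{\tge}_{\varphi}(\phi_{j}^{\mathrm{ref}}(\xi))\bigr]\phi_{j}'(\xi)+\dot{\tge}_{\varphi}(\phi_{j}^{\mathrm{ref}}(\xi))\bigl[\phi_{j}'(\xi)-\phi_{j}^{\mathrm{ref}\,\prime}(\xi)\bigr] \]
and invoking the same three estimates bounds this by a universal constant times $\delta$. The factor $42$ in the statement is simply a conservative choice of that universal constant so that the total $C^{1}$-error is at most $\eps$.

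The main technical hurdle is transferring the Lipschitz behaviour of $\dot{\tge}_{\varphi}$ past the double point: for $\varphi\ne0$ the curve is only $C^{1,1}$ there, so one must rely on the global $4\pi$-Lipschitz estimate for $\dot{\tge}_{\varphi}$ rather than on any $C^{2}$ control; everything else is an application of the inverse function theorem with quantitative bounds.
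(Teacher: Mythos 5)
Your proposal follows essentially the same route as the paper: invert the second-coordinate map $t\mapsto\sp{\g(t),-\uni2}$ near $t=0$ and $t=\tfrac12$ via a quantitative inverse function theorem, compare the resulting $\phi_j$ with the explicit reference inverse $\xi\mapsto\tfrac{\arcsin(4\pi\xi)}{4\pi}$, and propagate the estimates through the chain rule; the observation that $\tge_{\varphi}'$ is globally $4\pi$-Lipschitz across the tangential junctions is also exactly what the paper uses. The one substantive issue is quantitative: the lemma asserts the bound $\eps$ for $\delta=\eps/42$, so the ``universal constant'' must actually be checked to be at most $42$, and with the bounds you state it is not. Carrying $F_j'\ge\tfrac12$ into the derivative comparison puts a factor $\ge 4$ in front of the numerator $\abs{F_j'(\phi_j)-F_j^{\mathrm{ref}\,\prime}(\phi_j^{\mathrm{ref}})}\le(8\pi+1)\delta$ (using your $\abs{\phi_j-\phi_j^{\mathrm{ref}}}\le2\delta$), which already gives $\abs{\phi_j'-\phi_j^{\mathrm{ref}\,\prime}}\gtrsim 100\,\delta$ and a total $C^1$-error on the order of $150\,\delta$, i.e.\ roughly $3.6\,\eps$. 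The paper lands on $42\delta$ only because it keeps the lower bound on $\sp{\dg,-\uni2}$ and on $\sp{\tge_{\varphi}',-\uni2}$ close to $1$ (namely $\tfrac12\sqrt3$ and $\tfrac14\sqrt{15}$ on the relevant parameter ranges, where $4\pi t$ stays within $[-\tfrac\pi6,\tfrac\pi6]$ resp.\ $[-\tfrac14,\tfrac14]$), which sharpens $\abs{\phi_j-\phi_j^{\mathrm{ref}}}$ to $\tfrac23\sqrt3\,\delta$ and the derivative difference to $20\delta$. Your argument closes once you replace the crude $\tfrac12$ by these near-unit bounds (which your own computation $\cos(\tfrac14)>\tfrac{15}{16}$ already makes available); as written, the final sentence asserting that $42$ is ``a conservative choice'' is not backed by the estimates you provide.
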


\begin{proof}
 From~\eqref{eq:tg8} we infer for $t\in[-\tfrac1{24},\tfrac1{24}]
 \cup[\tfrac{11}{24},\tfrac{13}{24}]$
 \[ \sp{\tge_{\varphi}'(t),-\mathbf e_{2}} = \cos(4\pi t) \ge \
 \cos\tfrac\pi6 = \tfrac12\sqrt3, \]
 so, for $\norm{\g'-\tge_{\varphi}'}_{C^{0}}\le\frac1{2}\br{\sqrt3-1}$,
 \[ \sp{\dg(t),-\mathbf e_{2}} \ge
 \sp{\tge_{\varphi}'(t),-\mathbf e_{2}} - \norm{\g'-\tge_{\varphi}'}_{C^{0}}
 \ge \tfrac12. \]

 Thus the first derivative of the $C^{1}$-mapping 
 $t\mapsto\sp{\g(t),-\mathbf e_{2}}$
 is strictly positive on $[-\frac1{24},\frac1{24}]$ and $[\frac{11}{24},\frac{13}{24}]$.
 Consequently it is invertible, and its inverse is also $C^{1}$.
 Claiming $\norm{\g-\tge_{\varphi}}_{C^{0}} \le \delta \le \frac1{16\pi}$,
 its image contains the interval $[-\frac1{16\pi},\frac1{16\pi}]$
 since $\sp{\tge_{\varphi}\br{[-\frac1{24},\frac1{24}]},-\mathbf e_{2}}=[-\frac{1}{8\pi},\frac{1}{8\pi}]$.
 We denote the
 respective inverse functions, restricted to $[-\frac1{16\pi},\frac1{16\pi}]$,
 by $\phi_{1},\phi_{2}$.
 
 In order to estimate the distance between $x:=\phi_{1}(\xi)$ and
 $y:=\frac{\arcsin(4\pi\xi)}{4\pi}$ for $\xi\in[-\frac1{16\pi},\frac1{16\pi}]$
 we first remark that, by construction, $x\in[-\frac1{24},\frac1{24}]$.
 We obtain
 \begin{align*}
  \abs{\g(x)-\tge_{\varphi}(x)}
  &\ge\dist\br{-\xi\mathbf e_{2}+\mathbf e_{2}^{\perp},\tge_{\varphi}(x)} \\
  &=\abs{\xi-\sp{\tge_{\varphi}(x),-\mathbf e_{2}}} \\
  &=\tfrac1{4\pi}\abs{4\pi\xi-\sin(4\pi x)} \\
  &=\tfrac1{4\pi}\abs{\sin(4\pi y)-\sin(4\pi x)} \\
  &=\abs{\int_{x}^{y}\cos(4\pi s)\d s} \\
  &\ge \min\br{\cos(4\pi y),\cos(4\pi x)}\abs{x-y} \\
  &\ge \min\br{\tfrac14\sqrt{15},\cos\tfrac\pi6}\abs{x-y} \\
  &=\tfrac12\sqrt3\abs{x-y},
 \end{align*}
where we used the fact that $x$ and $y$ are so small that we are in the strictly
concave region of the cosine near zero.
 Letting $\norm{\g-\tge_{\varphi}}_{C^{1}}\le\delta\le\tfrac1{16\pi}$,
 we arrive at
 \[ \abs{\phi_{1}(\xi)-\tfrac{\arcsin(4\pi\xi)}{4\pi}} = \abs{x-y}\le\tfrac23\sqrt3\delta
 \qquad\text{for }\xi\in[-\tfrac1{16\pi},\tfrac1{16\pi}]. \]
 For the derivatives, we infer
 \begin{align*}
  &\sp{\tge_{\varphi}\br{\tfrac{\arcsin(4\pi\xi)}{4\pi}},-\mathbf e_{2}} = \xi,
 &&\sp{\tge_{\varphi}'\br{\tfrac{\arcsin(4\pi\xi)}{4\pi}},-\mathbf e_{2}} = \sqrt{1-(4\pi\xi)^{2}},
 \end{align*}
 thus
 \begin{align*}
 &\abs{\phi_{1}'(\xi)-\tfrac{\d}{\d\xi}\tfrac{\arcsin(4\pi\xi)}{4\pi}}
 = \abs{\frac1{{\sp{\dg(\phi_{1}(\xi)),-\mathbf e_{2}}}}-\frac1{\sqrt{1-(4\pi\xi)^{2}}}} \\
 &=\abs{\frac{\sp{\tge_{\varphi}'(\tfrac{\arcsin(4\pi\xi)}{4\pi}),-\mathbf e_{2}}-\sp{\dg(\phi_{1}(\xi)),-\mathbf e_{2}}}{\sqrt{1-(4\pi\xi)^{2}}\sp{\dg(\phi_{1}(\xi)),-\mathbf e_{2}}}} \\
 &\le\abs{\frac{\sp{\tge_{\varphi}'(\tfrac{\arcsin(4\pi\xi)}{4\pi})-\dg(\phi_{1}(\xi)),-\mathbf e_{2}}}{\frac14\sqrt{15}\br{\sqrt{1-(4\pi\phi_{1}(\xi))^{2}}-\sp{\tge_{\varphi}'(\phi_{1}(\xi))-\dg(\phi_{1}(\xi)),-\mathbf e_{2}}}}} \\
 &\le\frac{\abs{\tge_{\varphi}'(\tfrac{\arcsin(4\pi\xi)}{4\pi})-\dg(\phi_{1}(\xi))}}{\frac14\sqrt{15}\br{\sqrt{1-(\tfrac\pi6)^{2}}-\delta}} \\
 &\le\frac{4\pi\abs{\tfrac{\arcsin(4\pi\xi)}{4\pi}-\phi_{1}(\xi)}
 +\abs{\tge_{\varphi}'(\phi_{1}(\xi))-\dg(\phi_{1}(\xi))}}{\frac14\sqrt{15}\br{\sqrt{1-(\tfrac\pi6)^{2}}-\frac1{16\pi}}} \\
 &\le\frac{\frac{8\pi}{3}\sqrt3+1}{\frac14\sqrt{15}\br{\sqrt{1-(\tfrac\pi6)^{2}}-\frac1{16\pi}}}\cdot\delta \\
 &\le 20\delta \qquad\text{for }\xi\in[-\tfrac1{16\pi},\tfrac1{16\pi}].
 \end{align*}
 We arrive at
 \begin{align*}
  \norm{\tg_{1}-\Tge_{\varphi,1}}_{C^{0}([-\frac{1}{16\pi},\frac{1}{16\pi}])}
  &=\norm{\g\circ\phi_{1}-\tge_{\varphi}(\tfrac{\arcsin(4\pi\cdot)}{4\pi})}_{C^{0}} \\
  &\le\norm{\g\circ\phi_{1}-\tge_{\varphi}\circ\phi_{1}}_{C^{0}}
  +\norm{\tge_{\varphi}\circ\phi_{1}-\tge_{\varphi}(\tfrac{\arcsin(4\pi\cdot)}{4\pi})}_{C^{0}} \\
  &\le\br{1+\tfrac23\sqrt3}\delta \\
  &\le3\delta, \\
  \norm{\tg_{1}'-\Tge_{\varphi,1}'}_{C^{0}([-\frac{1}{16\pi},\frac{1}{16\pi}])}
  &=\norm{\br{\dg\circ\phi_{1}}\phi_{1}'-\tge_{\varphi}'\br{\tfrac{\arcsin(4\pi\cdot)}{4\pi}}\frac1{\sqrt{1-(4\pi\cdot)^{2}}}}_{C^{0}} \\
  &\le
  \norm{\br{\dg\circ\phi_{1}}\br{\phi_{1}'-\frac1{\sqrt{1-(4\pi\cdot)^{2}}}}}_{C^{0}}
  +\tfrac43\norm{\dg\circ\phi_{1}-\tge_{\varphi}'\circ\phi_{1}}_{C^{0}} \\
  &\qquad{} +\tfrac43\norm{\tge_{\varphi}'\circ\phi_{1}-\tge_{\varphi}'\br{\tfrac{\arcsin(4\pi\cdot)}{4\pi}}}_{C^{0}} \\
  &\le 20\delta\norm{\dg}_{C^{0}} + \tfrac43\delta + \tfrac43\cdot4\pi\cdot\tfrac23\sqrt3\delta \\
  &\le\br{20+20\delta+21}\delta \\
  &\le 42\delta <\varepsilon
 \end{align*}
 by our choice of $\delta$. The case $j=2$ is symmetric.
\end{proof}

Now we can state that the cylinder has in fact
the form claimed above.

\begin{lemma}[Two strands in a cylinder]\label{lem:zylinder}
 Let $\varphi\in[0,\pi]$, $\zeta\in\left(0,\tfrac1{96\pi}\right]$, and $\g\in C^{1}\rzd$ with
 \begin{equation}
  \norm{\g-\tge_{\varphi}}_{C^{1}} \le \delta\equiv\delta_{\zeta/2}
 \end{equation}
 where $\delta_{\eps}>0$ is the constant from~\eqref{eq:lgr}.
 Then the intersection of $\g$ with the cylinder
 $\mathcal Z$ defined in \eqref{grundzylinder}
 consists of two (connected) arcs $\tg_{1}$,
 $\tg_{2}$ 
 which enter and leave at the caps of $\mathcal Z$. These arcs
 can be written as graphs over the $\mathbf e_{2}$-axis, so
 each fibre
 $\mathcal Z_{\xi}$
 is met by both $\tg_{1}$ and $\tg_{2}$ transversally in precisely one point
 respectively.
\end{lemma}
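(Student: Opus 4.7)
The strategy is to first build the two graph-arcs via Lemma~\ref{lem:lgr} and then verify both that they lie inside $\mathcal{Z}$ and that no other piece of $\g$ enters $\mathcal{Z}$. Apply Lemma~\ref{lem:lgr} with $\eps:=\zeta/2$, producing $C^{1}$-diffeomorphisms $\phi_{j}$ on $[-\tfrac1{16\pi},\tfrac1{16\pi}]$ and arcs $\tg_{j}:=\g\circ\phi_{j}$ with $\tg_{j}(\xi)\in-\xi\mathbf e_{2}+\mathbf e_{2}^{\perp}$ and $\|\tg_{j}-\Tge_{\varphi,j}\|_{C^{1}}\le\zeta/2$. The hypothesis $\zeta\le 1/(96\pi)$ forces $\eta=\sqrt{\zeta/(8\pi)}\le 1/(16\pi)$, so restricting the $\phi_{j}$ to the sub-interval $[-\eta,\eta]$ is admissible.

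To see that the restricted arcs lie in $\mathcal{Z}$, I would use~\eqref{eq:tg8} to compute
\[ \abs{\Tge_{\varphi,j}(\xi)}_{\mathbf e_{2}^{\perp}}
= \tfrac{1-\sqrt{1-(4\pi\xi)^{2}}}{4\pi}
= \tfrac{4\pi\xi^{2}}{1+\sqrt{1-(4\pi\xi)^{2}}}\le 4\pi\xi^{2}, \]
which for $\abs\xi\le\eta$ is at most $4\pi\eta^{2}=\zeta/2$. Because both $\tg_{j}(\xi)$ and $\Tge_{\varphi,j}(\xi)$ lie in the same affine plane $-\xi\mathbf e_{2}+\mathbf e_{2}^{\perp}$, their difference is in $\mathbf e_{2}^{\perp}$, so the $C^{0}$-bound from Lemma~\ref{lem:lgr} and the triangle inequality yield $\abs{\tg_{j}(\xi)}_{\mathbf e_{2}^{\perp}}\le\zeta$. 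Combined with the $\mathbf e_{2}$-coordinate being $-\xi\in[-\eta,\eta]$, this gives $\tg_{j}([-\eta,\eta])\subset\mathcal Z$, with the endpoints $\tg_{j}(\pm\eta)$ sitting on the caps $\{x_{2}=\mp\eta\}$.

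The main remaining task is to exclude any other intersection of $\g$ with $\mathcal Z$. I would partition $\R/\Z$ into $I_{0}:=[-\tfrac1{24},\tfrac1{24}]$, $I_{1/2}:=[\tfrac12-\tfrac1{24},\tfrac12+\tfrac1{24}]$, and the complement $C$. For $t\in C$, a direct evaluation of~\eqref{eq:tg8} gives $\abs{\tge_{\varphi}(t)}_{\mathbf e_{2}^{\perp}}=(1-\cos(4\pi t))/(4\pi)\ge(2-\sqrt3)/(8\pi)$ (using $\cos(\pi/6)=\sqrt3/2$), and since $\|\g-\tge_{\varphi}\|_{C^{0}}\le\delta\le\zeta/84$ one obtains $\abs{\g(t)}_{\mathbf e_{2}^{\perp}}>\zeta$, so $\g(t)\notin\mathcal Z$. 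For $t\in I_{0}$ with $t\notin\phi_{1}([-\eta,\eta])$, recall from the construction in Lemma~\ref{lem:lgr} that $\phi_{1}$ is the inverse of the strictly increasing map $t\mapsto-x_{2}(\g(t))$ on $I_{0}$; hence either $t\notin\mathrm{Im}(\phi_{1})$, in which case $\abs{x_{2}(\g(t))}>1/(16\pi)\ge\eta$, or $t=\phi_{1}(\xi)$ with $\abs\xi>\eta$. In both cases $\abs{x_{2}(\g(t))}>\eta$, so $\g(t)$ lies outside the caps of $\mathcal Z$. The symmetric argument disposes of $I_{1/2}$.

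Transversality is then automatic: the $\mathbf e_{2}$-component of $\tg_{j}(\xi)$ is the linear function $-\xi$, so each fibre $\mathcal Z_{\xi}$ meets $\tg_{j}$ in exactly one point, and $\tg_{j}'(\xi)$ has nonzero $\mathbf e_{2}$-component $-1$. The only mildly delicate point is the quantitative bookkeeping in the second paragraph---choosing $\eps=\zeta/2$ so that the quadratic-in-$\xi$ radial deviation of $\Tge_{\varphi,j}$ from the $\mathbf e_{2}$-axis and the linear Lemma~\ref{lem:lgr} error add up to at most $\zeta$. Everything else reduces to routine interval-chasing and the uniform $C^{0}$-closeness of $\g$ to $\tge_{\varphi}$.
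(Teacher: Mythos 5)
Your proof is correct and follows essentially the same route as the paper: apply Lemma~\ref{lem:lgr} with $\eps=\zeta/2$, verify containment of the two graph-arcs via $\abs{\Tge_{\varphi,j}(\xi)+\xi\mathbf e_{2}}\le4\pi\xi^{2}\le\zeta/2$ plus the triangle inequality, and exclude all other points of $\g$ from $\mathcal Z$. Your exclusion step is organized slightly differently (a direct radial bound on the complement of $I_{0}\cup I_{1/2}$ together with monotonicity of $t\mapsto\sp{\g(t),-\mathbf e_{2}}$ on $I_{0}$ and $I_{1/2}$, rather than the paper's reduction to four arcs via $\abs{\sin(4\pi t)}\le\tfrac12$), but the content and the quantitative bookkeeping are the same.
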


Note that the images of $\tg_{1}$ and $\tg_{2}$ might possibly intersect.

\begin{proof}
 Applying Lemma~\ref{lem:lgr} we merely have to show that
 $\mathcal Z$ is not too narrow. We compute for $\abs\xi\le\eta$, $j=1,2$,
 \begin{align*}
  \abs{\widetilde{\tge}_{\varphi,j}(\xi)+\xi\mathbf e_{2}}
  &=\tfrac1{4\pi}\br{1-\cos\arcsin(4\pi\xi)} \\
  &=\tfrac1{4\pi} \frac{(4\pi\xi)^{2}}{1+\sqrt{1-(4\pi\xi)^{2}}}\le4\pi\xi^{2}\le\tfrac\zeta2, \\
  \abs{\tilde\g_{j}(\xi)+\xi\mathbf e_{2}}&\le \tfrac\zeta2+\tfrac\zeta2=\zeta.
 \end{align*}
 Furthermore we have to show that there exist no other intersection
 points of $\g$ with $\mathcal Z$.
 In the neighborhood of the caps of $\mathcal Z$
 there are no such points apart from those belonging to $\tilde\g_{1},\tilde\g_{2}$
 since the tangents of $\g$ transversally meet
 the normal disks of $\tge_{\varphi}$ as follows.
 As $\abs{\g(t)-\tge_{\varphi}(t)}\le\delta\le\frac\eps{42}$,
 all points belong to the $\delta$-neighborhood of $\tge_{\varphi}$,
 and any point $\g(t)$ belongs to a normal disk
 centered at $\tge_{\varphi}(\tilde t)$ with
 $\abs{\g(t)-\tge_{\varphi}(\tilde t)}\le\delta$,
 so
 \begin{equation}\label{eq:t-tilde1}
  \abs{\tge_{\varphi}(\tilde t)-\tge_{\varphi}(t)}\le2\delta.
 \end{equation}
 Furthermore, as $\tge_{\varphi}$ parametrizes a circle on $[0,\tfrac12]$
 and $[\tfrac12,1]$, we arrive at
 \begin{equation}\label{eq:t-tilde2}
  \sin\br{2\pi\abs{\tilde t-t}} = 2\pi\abs{\tge_{\varphi}(\tilde t)-\tge_{\varphi}(t)} \qquad\text{for either }t,\tilde t\in[0,\tfrac12]\text{ or }t,\tilde t\in[\tfrac12,1].
 \end{equation}
 Therefore, the angle between
 $\tge_{\varphi}'(\tilde t)$ and $\tge_{\varphi}'(t)$
 amounts to at most
 \begin{align*}
 \arccos\sp{\tge_{\varphi}'(\tilde t),\tge_{\varphi}'(t)}
 &=\arccos\br{1+\sp{\tge_{\varphi}'(\tilde t)-\tge_{\varphi}'(t),\tge_{\varphi}'(t)}} \\
 &\le\arccos\br{1-\abs{\tge_{\varphi}'(\tilde t)-\tge_{\varphi}'(t)}} \\
 &\le\arccos\br{1-4\pi\abs{\tilde t-t}} \\
 &\refeq{t-tilde2}=\arccos\br{1-2\arcsin\br{2\pi\abs{\tge_{\varphi}(\tilde t)-\tge_{\varphi}(t)}}} \\
 &\refeq{t-tilde1}\le\arccos\br{1-2\arcsin\br{4\pi\delta}} \\
 &\le\arccos\br{1-2\arcsin{\tfrac14}}
 <1.1.
 \end{align*}
 From $\cos x\le1-\tfrac{x^{2}}\pi$ for $x\in[-\tfrac\pi2,\tfrac\pi2]$
 we infer
 \begin{equation}\label{eq:arccos-x}
  x\ge\arccos\br{1-\tfrac{x^{2}}\pi},
 \end{equation}
 so the angle between $\g'(t)$ and $\tge_{\varphi}'(t)$
 is bounded above by
 \begin{equation}\label{eq:arccos}
 \begin{split}
 \arccos\frac{\sp{\g'(t),\tge_{\varphi}'(t)}}{\abs{\g'(t)}}
 &\le\arccos\br{\frac{1+\sp{\dg(t)-\tge_{\varphi}'(t),\tge_{\varphi}'(t)}}{1+\delta}} \\
 &\le\arccos\frac{1-\delta}{1+\delta}
 \overset{\eqref{eq:arccos-x}}{\le}\sqrt{\tfrac{2\pi\delta}{1+\delta}}
 \le\sqrt{2\pi\delta}\le\tfrac{\sqrt2}4<0.4.
 \end{split}
 \end{equation}
 Thus transversality is established by
 \begin{equation}\label{eq:tpc-transversal}
 \angle\br{\g'(t),\tge_{\varphi}'(\tilde t)}
 < 1.5<\tfrac\pi2.
 \end{equation}

 Now we want to determine which points of $\gamma$ actually lie
in the cylinder $\mathcal{Z}$. Such points satisfy
 $\abs{\sp{\g(t),-\mathbf e_{2}}}\le\eta$
 which implies
 \[ \abs{\sp{\tge_{\varphi}(t),-\mathbf e_{2}}}\le\eta+\delta
 \le\tfrac1{16\pi}+\tfrac1{16\pi}=\tfrac1{8\pi}. \]
 This, by definition of $\tge_{\varphi}$ in \eqref{eq:tg8},
 leads to $\abs{\sin(4\pi t)}\le\frac12$
 which defines four connected arcs in $\R/\Z$.
 Two of them, namely $[-\tfrac1{24},\tfrac1{24}]$ and $[\tfrac{11}{24},\tfrac{13}{24}]$, (partially) belong to $\mathcal Z$;
 for the other two, $[\tfrac{5}{24},\tfrac{7}{24}]$ and $[\tfrac{17}{24},\tfrac{19}{24}]$, we obtain
 \begin{align*}
  \dist\br{\tge_{\varphi}(t),\R\mathbf e_{2}}
  &=\frac{1-\cos(4\pi t)}{4\pi}\ge\frac{1-\frac12\sqrt2}{4\pi}, \\
\textnormal{hence}\qquad  \dist\br{\g(t),\R\mathbf e_{2}}
  &\ge\frac{1-\frac12\sqrt2}{4\pi}-\delta
  \ge\frac{4-2\sqrt2-1}{16\pi}
  >\frac1{96\pi}\ge\zeta.
 \end{align*}
\end{proof}

We just have seen that $\mathcal Z$ only contains
two sub-arcs of $\g$ which are parametrized over the $\mathbf e_{2}$-axis.
Thus $\g\setminus\mathcal Z$ consists of two arcs
which both join (different) caps of $\mathcal Z$.
In fact, they have the form of two ``handles''.
We intend to map them onto $\tge_{\varphi}$
by a suitable isotopy.

The actual construction is a little bit delicate as
we construct an isotopy on normal disks of
$\tge_{\varphi}$ which is not consistent with the fibres $\mathcal Z_{\xi}$
of the cylinder.
Therefore, we cannot claim to leave the entire cylinder $\mathcal Z$
pointwise invariant.
Instead, we consider the two middle quarters of $\mathcal Z$,
more precisely
 \[ \mathcal Z' := \sett{(x_{1},x_{2},x_{3})^{\top}\in\R^{3}}{x_{1}^{2}+x_{3}^{2}\le\zeta^{2},\abs{x_{2}}\le\nfrac\eta2}, \]
which will contain the entire ``linking'' of the two arcs
provided $\delta$ has been chosen accordingly.
In fact we will construct an isotopy on
$B_{2\eps}\br{\tge_{\varphi}}\setminus\mathcal Z'$
which
leaves $\mathcal Z'$ pointwise invariant
and maps $\g\setminus\mathcal Z$ to $\tge_{\varphi}\setminus\mathcal Z$.

The idea is first to map $\tg_{j}$ to $\widetilde{\tge}_{\varphi,j}$
on $\abs\xi\in[\nfrac\eta2+\eps,\eta-\eps]$
and to straight lines connecting
$\tg_{j}(\nfrac\eta2)$ to $\widetilde{\tge}_{\varphi,j}(\nfrac\eta2+\eps)$
and
$\widetilde{\tge}_{\varphi,j}(\eta-\eps)$ to $\tg_{j}(\eta)$
on $\abs\xi\in[\nfrac\eta2,\nfrac\eta2+\eps]\cup[\eta-\eps,\eta]$.
A second isotopy
on normal disks of $\tge_{\varphi}$
maps $\g\setminus\mathcal Z$
and the straight line inside $\mathcal Z$ on $\abs\xi\in[\eta-\eps,\eta]$ to $\tge_{\varphi}$.

\begin{figure}\centering
 \includegraphics[scale=.3]{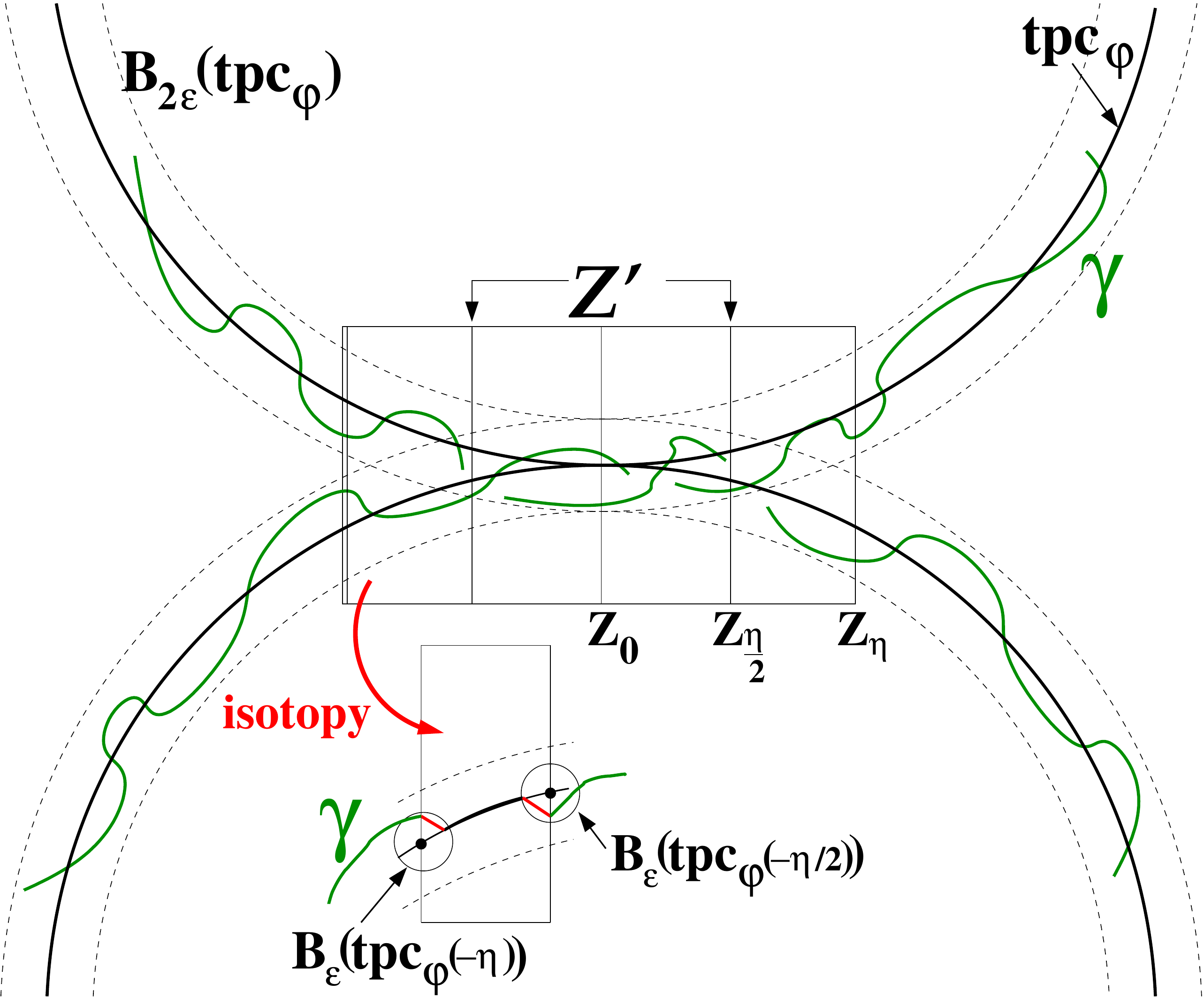}
 \caption{The handle isotopy on $\mathcal Z$: an intermediate stage, where one connects the part of $\tge_{\varphi}$ in $\mathcal{Z}\setminus\mathcal{Z'}$ to $\gamma$ by short straight segments.}\label{fig:zylinder}
\end{figure}

\begin{lemma}[Handle isotopy]\label{lem:isohandle}
 Let $\varphi\in(0,\pi]$, $\zeta\in\left(0,\tfrac1{96\pi}\right]$, and $\g\in C^{1}\rzd$ with
 \begin{equation*}
  \norm{\g-\tge_{\varphi}}_{C^{1}} \le \delta\equiv
  \delta_{\eps}
 \end{equation*}
 where $\delta_{\eps}>0$ is the constant from~\eqref{eq:lgr} and
 \begin{equation}\label{eq:epsilon}
  \eps\equiv\eps_{\zeta}:=\min\br{\nfrac\zeta{64}\sin\nfrac\varphi2,\tfrac1{20}\sqrt{\nfrac{\zeta}{8\pi}}}.
 \end{equation}
 Then there is an isotopy of $B_{2\eps}\br{\tge_{\varphi}}$
 which
 \begin{itemize}
 \item leaves $\mathcal Z'$ and $\partial B_{2\eps}\br{\tge_{\varphi}}$
 pointwise fixed,
 \item  deforms $\g\setminus\mathcal Z$ to $\tge_{\varphi}\setminus\mathcal Z$, and
 \item moves points by at most $4\eps$.
 \end{itemize}
\end{lemma}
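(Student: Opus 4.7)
My plan is to build $H$ as a composition of two isotopies $H=H^{(2)}\circ H^{(1)}$, following the strategy outlined immediately before the statement. As preparation, I apply Lemma~\ref{lem:lgr} with threshold $\zeta/2$ in place of $\eps$ to obtain the graph parametrizations $\tilde\g_{j}=\g\circ\phi_{j}$ of the two strands of $\g\cap\mathcal{Z}$ over the $\mathbf e_{2}$-axis, satisfying $\norm{\tilde\g_{j}-\Tge_{\varphi,j}}_{C^{1}}\le\zeta/2$. I then define an intermediate curve $\bar\g$ coinciding with $\g$ on $\mathcal{Z}'$ and outside $\mathcal{Z}$, equal to $\Tge_{\varphi}$ on $\abs\xi\in\sq{\nfrac\eta2+\eps,\eta-\eps}$, and linearly interpolating between $\tilde\g_{j}(\pm\nfrac\eta2)$ and $\Tge_{\varphi,j}(\pm(\nfrac\eta2+\eps))$ (respectively between $\Tge_{\varphi,j}(\pm(\eta-\eps))$ and $\tilde\g_{j}(\pm\eta)$) on the two transition intervals of length~$\eps$.

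The first isotopy $H^{(1)}$ is supported in $\mathcal{Z}\setminus\mathcal{Z}'$ and acts fibrewise on each disk $\mathcal{Z}_{\xi}$ with $\abs\xi\in\sq{\nfrac\eta2,\eta}$, translating $\tilde\g_{j}(\xi)$ to its $\bar\g$-target. Since Lemma~\ref{lem:zylinder} guarantees that both strands meet $\mathcal{Z}_{\xi}$ transversally in exactly one point each and that these two points stay at positive distance, a standard bump-function construction in the two-dimensional disk produces a smooth isotopy, pointwise fixed near $\partial\mathcal{Z}_{\xi}$, with displacement bounded by $\norm{\tilde\g_{j}-\Tge_{\varphi,j}}_{C^{0}}\le\zeta/2\le2\eps$. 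Concatenating these fibre isotopies, and using that source and target coincide at $\abs\xi=\nfrac\eta2$ (so matching at $\partial\mathcal{Z}'$ is automatic), yields the required isotopy.

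The second isotopy $H^{(2)}$ uses the normal-disk foliation of $\Tge_{\varphi}$ on $B_{2\eps}(\Tge_{\varphi})\setminus\mathcal{Z}'$. Each of the two circles composing $\Tge_{\varphi}$ has curvature $4\pi$, so the normal exponential map has injectivity radius $\nfrac{1}{4\pi}\gg 2\eps$ on each branch individually. Outside $\mathcal{Z}'$ (where $\abs{x_{2}}\ge\nfrac\eta2$) the two branches are separated by a multiple of $\zeta\sin(\nfrac\varphi2)$, obtained by comparing the two circular arcs at a common height and using $2\sin(\nfrac\varphi2)$ as the chord factor; by the choice of $\eps$ in~\eqref{eq:epsilon} this exceeds $4\eps$. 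Hence the $2\eps$-tubes around the two branches are disjoint, each normal disk meets $\bar\g$ in exactly one point, and $H^{(2)}_{t}$ is defined on each normal disk as a translation of that point toward $\Tge_{\varphi}$, cut off radially to fix $\partial B_{2\eps}(\Tge_{\varphi})$. The displacement is bounded by the disk radius $2\eps$, yielding total displacement $\le 4\eps$ for the composition.

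The main obstacle I anticipate is the geometric bookkeeping at the interface where the cylinder $\mathcal{Z}$ meets the normal-disk foliation: the inserted straight segments on $\abs\xi\in\sq{\eta-\eps,\eta}$ must lie inside the $2\eps$-tube around $\Tge_{\varphi}$ so that $H^{(2)}$ can subsequently sweep them onto $\Tge_{\varphi}$, while $H^{(1)}$ must also agree with the identity on $\partial\mathcal{Z}$ in a manner compatible with the cutoff that defines $H^{(2)}$. Both points are delicate but follow from the quantitative bound $\eps\le\tfrac{1}{20}\sqrt{\zeta/(8\pi)}=\nfrac{\eta}{20}$ in~\eqref{eq:epsilon}, which ensures that $\tilde\g_{j}$ and $\Tge_{\varphi,j}$ stay within a common narrow tube throughout the transition region, so that the linear interpolants remain $C^{1}$-close to $\Tge_{\varphi}$ and therefore inside its $2\eps$-neighbourhood.
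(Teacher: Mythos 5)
Your two-stage construction is the same one the paper carries out: a fibrewise isotopy on the disks $\mathcal Z_{\xi}$, $\abs\xi\in\sq{\nfrac\eta2,\eta}$, moving each strand onto $\Tge_{\varphi,j}$ in the middle range and onto straight interpolating segments in the two transition bands of width $\eps$, followed by an isotopy along those normal disks of $\tge_{\varphi}$ that do not meet $\mathcal Z'$. (The paper uses the Crowell--Fox ray map on disks where you use cut-off translations; that difference is immaterial.) There is, however, a concrete quantitative error. You apply Lemma~\ref{lem:lgr} with threshold $\zeta/2$ and then write $\norm{\tg_{j}-\Tge_{\varphi,j}}_{C^{0}}\le\zeta/2\le2\eps$; but \eqref{eq:epsilon} gives $\eps\le\nfrac\zeta{64}$, so $2\eps\le\nfrac\zeta{32}<\nfrac\zeta2$ and the last inequality is false. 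The hypothesis $\norm{\g-\tge_{\varphi}}_{C^{1}}\le\delta_{\eps}$ licenses applying Lemma~\ref{lem:lgr} with threshold $\eps=\eps_{\zeta}$ itself, yielding $\norm{\tg_{j}-\Tge_{\varphi,j}}_{C^{1}}\le\eps$, and it is this sharper bound you must use: not only for the $4\eps$ displacement estimate, but also to keep the two strands' operating regions in each fibre disjoint. The targets $\Tge_{\varphi,1}(\xi)$, $\Tge_{\varphi,2}(\xi)$ are only $\ge\nfrac\zeta8\sin\nfrac\varphi2\ge8\eps$ apart for $\abs\xi\ge\nfrac\eta2$ (this is the computation behind your ``chord factor'' remark, the paper's~\eqref{eq:disjoint-disks}), so if each strand could wander a distance $\zeta/2$ from its target, the two cut-off translations could overlap and $H^{(1)}$ would fail to be injective. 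Note also that Lemma~\ref{lem:zylinder} gives no quantitative lower bound on $\abs{a_{\xi}}$; the separation you need comes from the targets plus the $\eps$-closeness, not from that lemma.

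Two further points are asserted rather than proved. First, that the straight segments lie in the $2\eps$-neighbourhood of $\tge_{\varphi}$ does not follow from $\eps\le\nfrac\eta{20}$; it is a chord-versus-arc estimate over a parameter interval of length $\eps$ using $\abs{\Tge_{\varphi,j}''}\le6\pi$ (the paper isolates this as Lemma~\ref{lem:litn}, giving deviation $\le\sqrt2\cdot6\pi\eps^{2}+\eps<2\eps$). The bound $\eps\le\nfrac\eta{20}$ is instead what guarantees that the normal disks avoiding $\mathcal Z'$ still cover the segments at $\abs\xi\in\sq{\eta-\eps,\eta}$, since $\nfrac\eta2-\eps\ge9\eps$; this is the interface issue you flag, and you attach the right constant to the wrong claim. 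Second, the assertion that each normal disk meets the intermediate curve in exactly one point requires transversality estimates for both $\g$ and the inserted segments against the normal disks (angles bounded away from $\nfrac\pi2$, as in~\eqref{eq:straight-line-angle} and~\eqref{eq:axis-angle}); without these, the disk-by-disk definition of $H^{(2)}$ need not assemble into an isotopy carrying the curve onto $\tge_{\varphi}$. All of this is repairable along the lines you indicate, but it is where the actual work of the proof lies.
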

\begin{remark}\label{rem:iso}
From the proof it will become clear that the isotopy actually
deforms all of $\gamma$ outside a small $\varepsilon$-neighbourhood $B_\varepsilon(
\mathcal{Z}')$ of the smaller
cylinder $\mathcal{Z}'$ to $\tge_\varphi.$ In addition, in the small region $B_\varepsilon(\mathcal{Z}')\setminus \mathcal{Z}'$ the curve $\gamma$ is
deformed into a straight segment that lies in the $2\varepsilon$-neighbourhood
of $\tge_{\varphi}$.
\end{remark}
 Notice for the following proof that
the $\varepsilon$-neighbourhood of $\tge_\varphi$ coincides with the
union of all $\varepsilon$-normal disks of $\tge_\varphi$.

\begin{proof}
 On circular fibres we may employ an isotopy adapted from
 Crowell and Fox~\cite[App.~I, p.~151]{crowell-fox}.
 For an arbitrary closed circular planar disk $D$
 and given \emph{interior} points $p_{0},p_{1}\in D$ we may define
 an homeomorphism $g_{D,p_{0},p_{1}}:D\to D$
 by mapping any ray joining $p_{0}$ to a point $q$ on the boundary of $D$
 linearly onto the ray joining $p_{1}$ to $q$ so that
 $p_{0}\mapsto p_{1}$ and $q\mapsto q$.
 This leaves the boundary $\partial D$ pointwise invariant.
 Furthermore, $g_{D,p_{0},p_{1}}$ is continuous in $p_{0}$, $p_{1}$ and $D$ (thus especially in
 the center and the radius of $D$).
 Of course, as $g_{D,p_{0},p_{1}}$ maps $D$ onto itself,
 any point is moved by at most the diameter of~$D$.
 The isotopy is now provided by
 the homeomorphism
 \begin{equation}\label{eq:isotopy}
 H:[0,1]\times D\to[0,1]\times D, \qquad
 (\lambda,x)\mapsto(\lambda,g_{D,p_{0},(1-\lambda) p_{0}+\lambda p_{1}}(x))
 \end{equation}
 which analogously works for the ellipsoid.
 
 Now we apply this isotopy to any fibre $\mathcal Z_{\xi}$
 of
 \[ \mathcal Z\setminus\mathcal Z' = \bigcup_{\abs\xi\in[\eta/2,\eta]}\mathcal Z_{\xi}. \]
 Here, for $j=1,2$ and any $\xi$ satisfying
 $\abs\xi\in\sq{\nfrac\eta2,\eta}$,
 we let $\set{p_{0,j}}=\mathcal Z_{\xi}\cap
 \tilde\g_{j}$ and $D_{j}$ be the $2\eps$-ball
 centered at $p_{1,j}:=\widetilde{\tge}_{\varphi,j}(\xi)$.
 As $\eps<\nfrac\zeta4$ we have $D_{j}\subset \mathcal Z_{\xi}$,	
 
 By our choice of $\eps$,
 the disks $D_{1}$ and $D_{2}$ in each fibre are
 disjoint. To see this, we compute using
 $1-\cos\varphi=2\sin^{2}\nfrac\varphi2$
 \begin{align}\label{eq:disjoint-disks}
 \begin{split}
 \abs{\widetilde{\tge}_{\varphi,1}(\xi)-\widetilde{\tge}_{\varphi,2}(\xi)}
 &=\tfrac1{4\pi}\br{1-\cos\arcsin(4\pi\xi)}\sqrt{2-2\cos\varphi} \\
 &=\tfrac1{4\pi}\br{1-\sqrt{1-(4\pi\xi)^{2}}}\cdot2\sin\nfrac\varphi2 \\
 &=\frac{(4\pi\xi)^{2}}{1+\sqrt{1-(4\pi\xi)^{2}}}\cdot\frac{\sin\tfrac\varphi2}{2\pi}
 \ge4\pi\xi^{2}\sin\nfrac\varphi2 \\
 &\ge\pi\eta^{2}\sin\nfrac\varphi2=\nfrac\zeta8\sin\nfrac\varphi2
 \ge8\eps
 \end{split}
 \end{align}
 for any $\xi\in\sq{\nfrac\eta2,\eta}$,
 so $\dist(D_{1},D_{2})
 \ge8\eps-2\cdot2\eps=4\eps>0$.
 
 Now we construct an isotopy on the fibres
 contained in $\mathcal Z\setminus\mathcal Z'$,
 see Figure~\ref{fig:zylinder}.
 We will always employ the isotopy~\eqref{eq:isotopy}
 on the fibres with $p_{0,j}:=\tilde\g_j(\xi)$.

 For $\xi \in \sq{\nfrac\eta2,\nfrac\eta2+\eps}$
 we let $p_{1,j}$ be the intersection
 of $\mathcal Z_{\xi}$ with the straight line
 joining $\tilde\g_{j}(\nfrac\eta2)$ and $\widetilde{\tge}_{\varphi,j}(\nfrac\eta2+\eps)$.
 To this end we have to ensure that
 this line belongs to the $2\eps$-neighborhood
 of $\tge_{\varphi}$.
 (In fact, its interior points belong to $\mathcal Z\setminus\mathcal Z'$ since any cylinder is convex.)
 As $\tilde\g_{j}(\nfrac\eta2)$
 belongs to the $\eps$-neighborhood
 of $\tge_{\varphi}$,
 it is sufficient to apply Lemma~\ref{lem:litn} below.
 To this end, we let $f = \mathbb P\,\widetilde{\tge}_{\varphi,j}$
 where $\mathbb P$ denotes the projection to $\mathbf e_{2}^{\perp}$, and
 \[ f'(\xi) = \frac{\mathbb P\tge_{\varphi}'\br{\frac{\arcsin(4\pi\xi)}{4\pi}}}{\sqrt{1-(4\pi\xi)^{2}}},
 \quad
 f''(\xi) = \frac{\mathbb P\tge_{\varphi}''\br{\frac{\arcsin(4\pi\xi)}{4\pi}}}{{1-(4\pi\xi)^{2}}}
 + (4\pi)^{2}\xi\frac{\mathbb P\tge_{\varphi}'\br{\frac{\arcsin(4\pi\xi)}{4\pi}}}{\br{1-(4\pi\xi)^{2}}^{3/2}}. \]
 As $\abs{\tge_{\varphi}'}\equiv1$, $\abs{\tge_{\varphi}''}\equiv4\pi$
 (up to the points $t=0$, $t=\tfrac12$ of tangential intersection), we arrive at
 $\abs{f''}\le6\pi$
 for any $\xi\in\sq{-\frac1{16\pi},\frac1{16\pi}}$.
 Then, applying Lemma~\ref{lem:litn} to
 $f(\cdot-\nfrac\eta2)$ with $\ell=\eps$, $K=6\pi$, and
 $y=\tilde\g_{j}(\nfrac\eta2)$, the distance of any point of
 the straight line to $f$ is bounded by
 $\sqrt2\cdot 6\pi\eps^{2}+\eps<2\eps$
 for $\eps<\frac1{6\pi\sqrt2}$.
 For future reference we remark that,
 by $\abs{\widetilde{\tge}'_{\varphi,j}}\le\nfrac4{\sqrt{15}}$, the angle between
 the straight line and the $\mathbf e_{2}$-axis
 is bounded above by
 \begin{equation}\label{eq:straight-line-angle}
  \arctan\frac{\abs{\tg_{j}(\nfrac\eta2)-\widetilde{\tge}_{\varphi,j}(\nfrac\eta2)}
  +\abs{\widetilde{\tge}_{\varphi,j}(\nfrac\eta2)-\widetilde{\tge}_{\varphi,j}(\nfrac\eta2+\eps)}}{\br{\nfrac\eta2+\eps}-\nfrac\eta2}
  \le\arctan\tfrac{\eps+4\eps/\sqrt{15}}{\eps}<\tfrac25\pi.
 \end{equation}

 For $\xi \in \sq{\nfrac\eta2+\eps,\eta-\eps}$
 we let $p_{1,j} = \widetilde{\tge}_{\varphi,j}(\xi)$.
 
 For $\xi \in \sq{\eta-\eps,\eta}$
 we let $p_{1,j}$ be the intersection
 of $\mathcal Z_{\xi}$ with the straight line
 joining $\tilde\g_{j}(\eta)$ and $\widetilde{\tge}_{\varphi,j}(\eta-\eps)$.
 We argue as before using Lemma~\ref{lem:litn}.
 This particular straight line ends at one cap of $\mathcal Z$
 and will be moved to $\tge_{\varphi}$ by the second isotopy.
 
 The same construction can be applied for the corresponding negative
 values of $\xi$.
 Now we have obtained the situation sketched in Figure~\ref{fig:zylinder}.
 
 We consider the $2\eps$-neighborhood of $\tge_{\varphi}$.
 It consists of normal disks centered at the points of $\tge_{\varphi}$.
 We restrict to a neighborhood containing those normal disks
 that do not intersect $\mathcal Z'$.
 They cover
 the straight lines in $\mathcal Z$ at $\abs\xi \in \sq{\eta-\eps,\eta}$
 for otherwise there would be some normal disk (of radius $2\eps$)
 intersecting the straight line (at $\abs\xi\in[\eta-\eps,\eta]$)
 and $\mathcal Z'$ (at $\xi\in[-\nfrac\eta2,\nfrac\eta2]$).
 However, by construction, $(\eta-\eps)-\nfrac\eta2=\nfrac\eta2-\eps
 =\tfrac12\sqrt{\nfrac\zeta{8\pi}}-\eps\refeq{epsilon}\ge9\eps$,
 a contradiction.

 In order to apply the isotopy which moves the points of $\g$ to $\tge_{\varphi}$,
 we have to show
 that all points of $\g$ (and the straight lines as well) belong to
 the $2\eps$-neighborhood of $\tge_{\varphi}$ and transversally meet the
 corresponding normal disk.
 
 For the straight lines we have already seen (using
 Lemma~\ref{lem:litn}) that they lie inside
 the $2\eps$-neighborhood of $\tge_{\varphi}$.
 On $\abs\xi\le\eta\le\frac1{\sqrt3\cdot16\pi}$
 the angle $\alpha$ between the $\mathbf e_{2}$-axis
 and the tangent line to $\widetilde{\tge}_{\varphi,j}$ amounts at most
 to
 \begin{equation}\label{eq:axis-angle}
 \arcsin\frac{\frac1{16\pi\sqrt3}}{\frac1{4\pi}}=\arcsin\tfrac1{4\sqrt3}<\tfrac\pi{10};
 \end{equation}
 see Figure~\ref{fig:tangent}.
 \begin{figure}\centering
 \includegraphics[scale=.5]{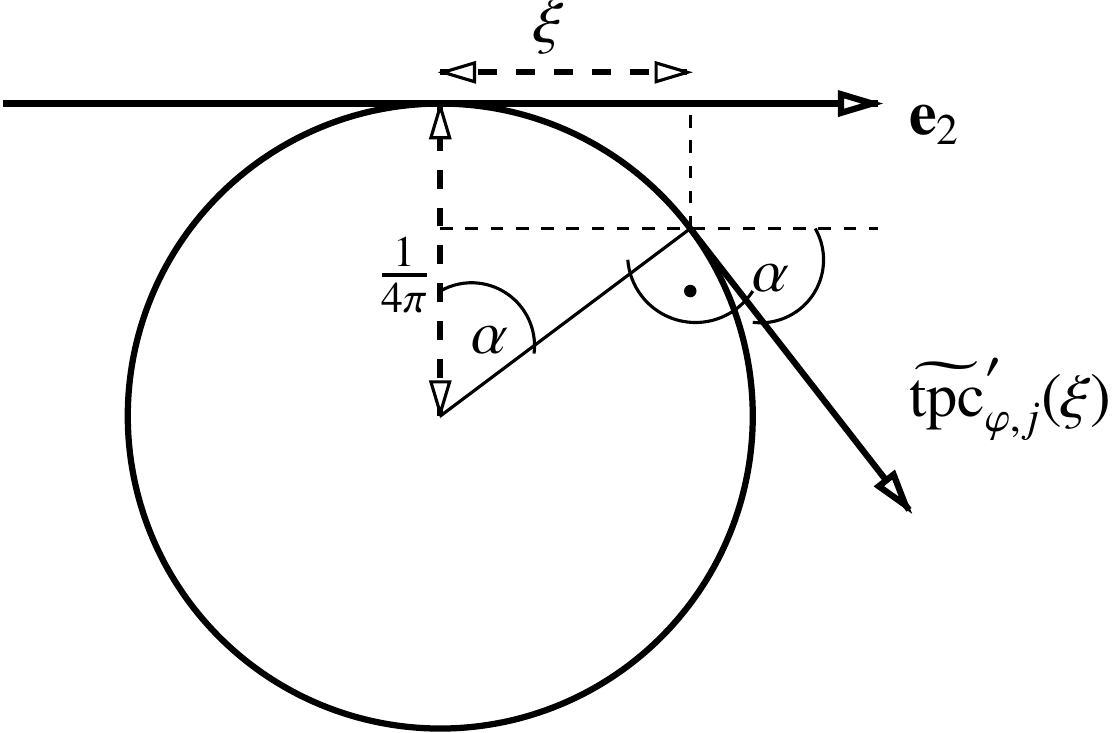}
 \caption{Elementary geometry leading to~\eqref{eq:axis-angle}: $\sin\alpha\le 4\pi\xi\le \tfrac{4\pi}{16\sqrt{3}\pi}=\tfrac{1}{4\sqrt{3}}$.}\label{fig:tangent}
 \end{figure}
 Therefore, using~\eqref{eq:straight-line-angle},
 the angle between the straight line
 and the tangent line to $\widetilde{\tge}_{\varphi,j}$
 is strictly smaller than $\frac\pi2$,
 which implies transversality.

 As calculated above, the distance between $\mathcal Z_{\eta}$
 and $\mathcal Z_{\eta/2}$ amounts to
 at least $10\eps$, so the straight line is covered
 by normal disks which do not intersect $\mathcal Z'$.
  
 For the other points of $\g$ outside $\mathcal Z$ we argue as
 in~\eqref{eq:tpc-transversal}.
\end{proof}

\begin{lemma}[Lines inside a graphical tubular neighborhood]\label{lem:litn}
 Let $f\in C^{2}([0,\ell],\R^{d})$, $\abs{f''}\le K$, $y\in\R^{d}$.
 Then,  $g(x):=f(x) - \br{f(0)+\frac x\ell(y-f(0))}$ satisfies
 \[ \abs{g(x)} \le \sqrt dK\ell^{2} + \abs{y-f(\ell)}
 \qquad\text{for any }x\in[0,\ell]. \]
\end{lemma}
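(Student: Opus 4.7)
My plan is to decompose $g$ into a standard ``deviation from secant'' piece plus a linear correction accounting for the fact that the straight segment ends at $y$ instead of $f(\ell)$. Concretely, set
\[ h(x) := f(x) - f(0) - \tfrac{x}{\ell}\bigl(f(\ell)-f(0)\bigr), \]
so that a direct computation gives $g(x) = h(x) + \tfrac{x}{\ell}\bigl(f(\ell)-y\bigr)$. The second summand is trivially bounded by $\abs{y-f(\ell)}$ on $[0,\ell]$, so everything reduces to estimating $\abs{h(x)}$ by $\sqrt d\, K\ell^2$.

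For the bound on $h$, I would work componentwise. Each coordinate $h_i$ satisfies $h_i(0)=h_i(\ell)=0$ and $h_i''(x)=f_i''(x)$, with $\abs{f_i''(x)}\le\abs{f''(x)}\le K$. Two standard routes give the desired pointwise estimate on $h_i$: either Taylor expansion around $0$ combined with the endpoint condition $h_i(\ell)=0$ to solve for the unknown derivative term, or (more cleanly) the Green's function representation for $-u''$ with Dirichlet data on $[0,\ell]$, which yields
\[ h_i(x)=-\int_0^\ell G(x,s)\,f_i''(s)\,\mathrm ds,\qquad G(x,s)\ge 0,\ \int_0^\ell G(x,s)\,\mathrm ds=\tfrac{x(\ell-x)}{2}. \]
Either way one obtains $\abs{h_i(x)}\le \tfrac{K}{2}x(\ell-x)\le \tfrac{K\ell^2}{8}$ for every $x\in[0,\ell]$.

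Summing the squares over $i=1,\dots,d$ gives $\abs{h(x)}\le\sqrt d\,\tfrac{K\ell^2}{8}\le\sqrt d\,K\ell^2$, and combining with the linear term from the decomposition yields
\[ \abs{g(x)}\le\abs{h(x)}+\abs{y-f(\ell)}\le\sqrt d\,K\ell^2+\abs{y-f(\ell)}, \]
as claimed. There is no real obstacle here; the lemma is an elementary Taylor-type estimate, and the only minor point is keeping track of the dimensional factor $\sqrt d$ which arises from passing between the componentwise and vector-norm estimates (and which is certainly loose, but sufficient for the applications in Lemma~\ref{lem:isohandle}).
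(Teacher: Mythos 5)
Your proof is correct. The decomposition $g(x)=h(x)+\frac x\ell\br{f(\ell)-y}$ is implicitly the same splitting the paper uses (it writes $g$ as the integral of $f'-\frac{f(\ell)-f(0)}{\ell}-\frac{y-f(\ell)}{\ell}$ and peels off the last summand, which contributes at most $\abs{y-f(\ell)}$), so the only genuine difference lies in how the secant-deviation piece is bounded: the paper applies the mean value theorem to each component $f_j'$ to produce $\sigma_j$ with $f_j'(\sigma_j)$ equal to the secant slope and then uses $\abs{f_j'(\xi)-f_j'(\sigma_j)}\le K\ell$ before integrating, whereas you use the two-point Taylor/Green's function representation for $h_i$ with Dirichlet data, which gives the sharper componentwise bound $\tfrac K2x(\ell-x)\le\tfrac{K\ell^2}8$. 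Both routes are elementary, both incur the same factor $\sqrt d$ from passing between componentwise and Euclidean-norm estimates, and yours simply buys a better constant, which is immaterial for the application in Lemma~\ref{lem:isohandle}.
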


\begin{proof}
 We compute
 \[ g(x) = \int_{0}^{x} \br{f'(\xi)-\frac{f(\ell)-f(0)}{\ell}-\frac{y-f(\ell)}{\ell}}\d\xi. \]
 By the mean value theorem, there are $\sigma_{1},\dots,\sigma_{d}\in(0,\ell)$
 with $f'_{j}(\sigma_{j})=\frac{f_{j}(\ell)-f_{j}(0)}{\ell}$, $j=1,\dots,d$, so
 \begin{align*}
  \abs{g(x)} &\le \int_{0}^{x} \sqrt{\abs{f'_{1}(\xi)-f'_{1}(\sigma_{1})}^{2}
  + \cdots + \abs{f'_{d}(\xi)-f'_{d}(\sigma_{d})}^{2}}\d\xi + \abs{y-f(\ell)} \\
  &\le\sqrt dK\ell^{2}+\abs{y-f(\ell)}.
 \end{align*}
\end{proof}

Recall the definition of the continuous
accumulating angle function $\beta:[-\eta,\eta]\to\R$ in \eqref{betadef},
and
$\Delta_{\beta} = \beta(\eta)-\beta(-\eta)$,
where $a_\xi$ is the segment 
connecting $\tg_1(\xi)$ and $\tg_2(\xi)$ (see \eqref{eq:a-xi}),  
and $\nu$ was given by \eqref{eq:nu}.

\begin{proposition}[Only $(2,b)$-torus knots are $C^1$-close to $\tge_{\varphi}$]\label{prop:braids}
 Let $\varphi\in(0,\pi]$, $\zeta\in\left(0,\tfrac1{96\pi}\right]$, and $\g\in C^{1}\rzd$ be embedded with
 \begin{equation*}
  \norm{\g-\tge_{\varphi}}_{C^{1}} \le \delta
 \end{equation*}
 where $\delta\equiv\delta_{\eps}>0$ and $\eps\equiv\eps_{\zeta}>0$
 are defined in~\eqref{eq:lgr} and~\eqref{eq:epsilon}.
 Let $b\in\Z$ denote the rounded value of
 $\Delta_{\beta}/\pi$.
 Then $b$ is an odd integer, and $\g$
 is unknotted if $b=\pm1$ and belongs to $\tkc$, if $|b|\ge 3.$
\end{proposition}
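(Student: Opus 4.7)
The plan has three parts: reduce to a normal-form configuration outside the cylinder via the handle isotopy, compute the boundary values of $\beta$, and then read off the knot type as the Markov closure of a 2-braid. Applying Lemma~\ref{lem:isohandle} produces an isotopic curve $\g_{*}$ that coincides with $\tge_{\varphi}$ outside $\mathcal Z$ and whose intersection with $\mathcal Z$ is, by Lemma~\ref{lem:zylinder}, the two graphs $\tg_{1},\tg_{2}$ over the $\mathbf e_{2}$-axis. Since $\g_{*}$ is still embedded, $a_{\xi}\neq 0$ on $[-\eta,\eta]$ and $\beta$ is continuous there, so it suffices to determine the knot class of $\g_{*}$.

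The key geometric observation forcing $b$ to be odd is that the strands $\Tge_{\varphi,1},\Tge_{\varphi,2}$ of the singular limit $\tge_{\varphi}$ \emph{switch circles} as $\xi$ crosses $0$: for $\xi>0$ one has $\Tge_{\varphi,1}(\xi)$ on the first circle of $\tge_{\varphi}$ (horizontal part along $+\mathbf e_{1}$) and $\Tge_{\varphi,2}(\xi)$ on the second circle (along $\nu':=\mathbf e_{1}\cos\varphi+\mathbf e_{3}\sin\varphi$), while for $\xi<0$ these roles are interchanged. A direct use of~\eqref{eq:tg8} yields
\[ \Tge_{\varphi,1}(\xi)-\Tge_{\varphi,2}(\xi)=\pm\tfrac{1-\cos\arcsin(4\pi\xi)}{4\pi}\cdot 2\sin\tfrac\varphi2\,\bigl(\mathbf e_{1}\sin\tfrac\varphi2-\mathbf e_{3}\cos\tfrac\varphi2\bigr), \]
with sign $+$ for $\xi>0$ and $-$ for $\xi<0$; the bracketed vector is $\nu$ rotated by $-\pi/2$ in $\mathbf e_{2}^{\perp}$. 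Hence for $\tge_{\varphi}$ itself the angle $\beta$ corresponds to $-\pi/2\pmod{2\pi}$ when $\xi>0$ and $+\pi/2\pmod{2\pi}$ when $\xi<0$. The $C^{1}$-closeness supplied by Lemma~\ref{lem:lgr} transfers these asymptotics to $\g_{*}$ up to arbitrarily small error, so $\beta(-\eta)\in\pi/2+2\pi\Z$ and $\beta(\eta)\in-\pi/2+2\pi\Z$ (up to such errors), which forces $\Delta_{\beta}\in-\pi+2\pi\Z$. In particular the rounded value $b$ of $\Delta_{\beta}/\pi$ is an odd integer.

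For the identification of the knot class, I would observe that the two handles of $\g_{*}$ outside $\mathcal Z$ are disjoint simple arcs, each lying on a single circle of $\tge_{\varphi}$ and joining same-side endpoints of $\mathcal Z$ (handle~1 the two $+\mathbf e_{1}$-side points, handle~2 the two $+\nu'$-side points). This realises the standard Markov closure of a 2-braid on the cap disks. Since $B_{2}$ is infinite cyclic with generator $\sigma_{1}$ (one half-twist, corresponding to a $\pi$-rotation of $a_{\xi}$), the braid inside $\mathcal Z$ is, up to braid-isotopy with endpoints on the caps fixed, equal to $\sigma_{1}^{b}$. The Markov closure of $\sigma_{1}^{b}$ is the $(2,b)$-torus link, a knot precisely when $b$ is odd: the unknot for $b=\pm1$ and the non-trivial $(2,b)$-torus knot in $\tkc$ for odd $|b|\ge 3$.

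The main obstacle is the careful bookkeeping in the second step. One must correctly identify which of the two circles of $\tge_{\varphi}$ carries each strand for $\xi<0$ versus $\xi>0$; this ``circle swap'' at the self-intersection point is what flips the direction of $a_{\xi}^{\tge_{\varphi}}$ and is the geometric origin of the odd multiple of $\pi$ in $\Delta_{\beta}$. A secondary subtlety is verifying that the handles perform the Markov-type closure rather than a cross-closure, which hinges on each handle lying entirely on a single circle of $\tge_{\varphi}$ and therefore joining same-side endpoints of the cylinder.
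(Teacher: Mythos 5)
Your proposal is correct and follows essentially the same route as the paper's proof: normalize the two handles via Lemma~\ref{lem:isohandle}, exploit the sign flip $\chi_{\xi}=-\chi_{-\xi}$ of the difference vector of the two strands of $\tge_{\varphi}$ across $\xi=0$ (your ``circle swap'') to force $\Delta_{\beta}$ into an odd multiple of $\pi$ up to a controlled angular error, and then read off the knot type as the Markov closure of $\sigma_{1}^{b}$ in $\mathcal B_{2}\cong\Z$. The only inaccuracy is your phrase ``up to arbitrarily small error'': the deviation of $a_{\pm\xi}$ from $\chi_{\pm\xi}$ is not arbitrarily small but only bounded by $\arcsin\frac{2\eps}{4\eps}=\frac\pi6$ at each cap (the endpoints lie in disks of radius $2\eps$ about $\widetilde{\tge}_{\varphi,j}(\pm\xi)$ whose mutual distance is merely $\ge 8\eps$ by~\eqref{eq:disjoint-disks}), which still suffices because the resulting total error $\tfrac\pi3<\tfrac\pi2$ keeps the rounding of $\Delta_{\beta}/\pi$ unambiguous.
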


\begin{figure}\centering
 \includegraphics[scale=.5]{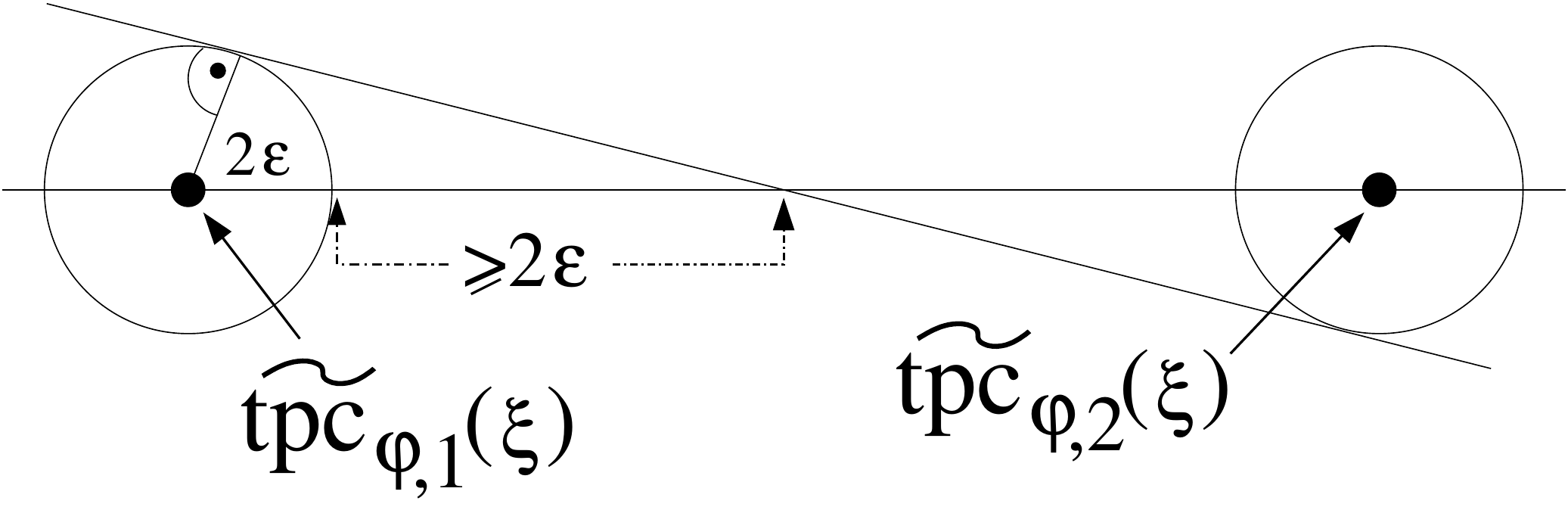}
 \caption{The maximal possible angle between the lines
 defined by $a_{\pm\xi}$ and $\nu$.}\label{fig:a-angle}
\end{figure}

\begin{proof}
 For each $\xi\in [\eta/2,\eta]$ (in fact,
 for any $\xi\in(0,\eta]$) the line $\R\nu$
  is perpendicular to the vectors
  $\chi_{\pm\xi}:=\widetilde{\tge}_{\varphi,1}(\pm\xi)- \widetilde{\tge}_{\varphi,2}(\pm\xi)$, and notice that $\chi_\xi=-\chi_{-\xi}$.

The endpoints $\tg_{1}(\pm\xi)$ and $\tg_{2}(\pm\xi)$
of the vectors $a_{\pm\xi}$ are contained in disks of radius $2\eps$
 inside $\mathcal Z_{\pm\xi}$ centered at $\widetilde{\tge}_{\varphi,j}(\pm\xi)$,
 $j=1,2$, for $\xi\in [\eta/2,\eta]$.
 According to~\eqref{eq:disjoint-disks},
 these disks are disjoint, having distance at least $4\eps$, so that
the usual (unoriented) angle between $\chi_{\pm\xi}$  and $a_{\pm\xi}$
 is bounded by $\arcsin\frac{2\eps}{4\eps}=\frac\pi6$, see Figure~\ref{fig:a-angle}.
Consequently, 
$$
\angle (\nu,a_\xi)\in [\tfrac\pi2-\tfrac\pi6,\tfrac\pi2+\tfrac\pi6]\quad\Foa
|\xi|\in [\tfrac{\eta}2,\eta];
$$
hence, according to~\eqref{betadef}, $\beta(-\eta)\in[\tfrac{3\pi}2-\tfrac\pi6,-\tfrac{3\pi}2+\tfrac\pi6]$.
For the range $\xi\in [\eta/2,\eta]$ such an explicit statement about 
$\beta(\xi)$
(taking values in $\R$) cannot be made,
since the vector $a_\xi$ may have rotated several times while $\xi$ traverses the
interval $[-\eta/2,\eta/2]$, but the vector $a_\xi$ points roughly into
the direction of $\chi_\xi$ for each $\xi\in [\eta/2,\eta]$, which implies 
that there is an integer $m\in\Z$ (counting those rotations) such that
$$
(2m+1)\pi-\tfrac{\pi}3\le \beta(\xi)-\beta(-\xi)
\le (2m+1)\pi+\tfrac{\pi}3\quad
\Foa \xi\in [\eta/2,\eta].
$$
This in turn yields that the rounded value  of $(\beta(\xi)-\beta(-\xi))/\pi$ for all
$\xi\in [\eta/2,\eta)$, and hence also $b$ equals
$(2m+1)$, an odd number.

Therefore, in order to determine the knot type
of $\g$, we may apply Lemma~\ref{lem:isohandle} to deform $\gamma$
into an ambient isotopic curve $\g_{*}$ and analyze that curve instead.
By Remark \ref{rem:iso} the previous arguments apply to $\g_{*}$
as well, in particular the crucial angle-estimate based on 
 Figure~\ref{fig:a-angle}. So the rounded value $b_*$
of $\Delta_{\beta_{*}}$
of the  angle $\beta_{*}(\xi)$
defined by $\mathbf e_{1}\cos\br{\nfrac\varphi2+\beta_{*}(\xi)}+\mathbf e_{3}\sin\br{\nfrac\varphi2+\beta_{*}(\xi)}=a_{*\xi}:=\tilde{\g}_{*1}(\xi)-\tilde{\g}_{*2}(\xi)$
coincides with $b$, since $\gamma_*$ coincides with
$\gamma$ in the subcylinder $\mathcal{Z}'$. 
(By construction we know (see  Lemma~\ref{lem:isohandle} and Remark
\ref{rem:iso})    that $\gamma_*\cap\mathcal{Z}$ consists of
two sub-arcs of $\g_{*}$ 
 transversally meeting each fibre $\mathcal{Z}_\xi$, $\xi\in [-\eta,\eta]$,
of the cylinder $\mathcal Z$, so the vectors $a^*_\xi$ are well-defined.)

 We may now consider the $C^{1}$-mapping
 $\mathfrak n:[-\nfrac\eta2,\nfrac\eta2]\to\S^{1}$,
 $\xi\mapsto\frac{a^*_{\xi}}{\abs{a^*_{\xi}}}\in \Span\{\mathbf e_1,\mathbf e_3\}$.
 By Sard's theorem, almost any direction $\tilde\nu\in\S^{1}$
 is a regular value of $\mathfrak n$, i.e.,
 its preimage consists of isolated (thus, by compactness,
 finitely many) points
 $-\nfrac\eta2\le\xi_{1}<\cdots<\xi_{k}\le\nfrac\eta2$
 (so-called regular points) 
 at which the derivative of $\mathfrak n$ does not vanish.
 At these points we face a self-intersection
 of the two strands inside $\mathcal Z$
 when projecting onto $\tilde\nu^{\perp}$.
 Due to $\mathfrak n'(\xi_{j})\ne0$, $j=1,\dots,k$,
 each of these points can be identified to be either an \emph{overcrossing} ($\overcrossing$) or
 an \emph{undercrossing} ($\undercrossing$).
 We choose some $\psi\in\R/2\pi\Z$ arbitrarily close
 to $\nfrac\varphi2$, such that
 \[ \tilde\nu = \mathbf e_{1}\cos\psi+\mathbf e_{3}\sin\psi \]
 is a regular value of $\mathfrak n$.
 
 As $\g_{*}$ coincides with $\tge_{\varphi}$
 outside $\mathcal Z$ and $a^*_{\xi}/\abs{a^*_{\xi}}$
 is bounded away from the projection line $\R\tilde\nu$, i.e., $\angle(a^*_\xi,
 \nu)\in [\nfrac\pi2-\nfrac\pi6,\nfrac\pi2+\nfrac\pi6]$,
for all $\abs\xi\in[\nfrac\eta2,\eta]$ 
 there are no crossings outside $\mathcal Z'$.
 
 In fact, the projection provides a two-braid presentation of the knot
 as we see two strands transversally passing through the fibres of a narrow
 cylinder in the same direction.
 This is a (two-) \emph{braid}.
 The fact that the strands' end-points
 on one cap of the cylinder are connected to the
 end-points on the opposite cap by two ``unlinked'' arcs (outside $\mathcal Z'$)
 provides a \emph{closure} of the braid.
 
 The isotopy class of any braid
 consisting of $n$ strands is uniquely characterized by
 a \emph{braid word}, i.e., an element of the
 group $\mathcal B_{n}$ which
 is given by $n-1$ generators $\sigma_{1},\dots,\sigma_{n-1}$
 and the relations
 \begin{equation}\label{eq:braid-rel}
 \begin{split}
 \sigma_{j}\sigma_{j+1}\sigma_{j}=\sigma_{j+1}\sigma_{j}\sigma_{j+1}
 &\quad\text{for }j=1,\dots,n-2, \\
 \text{and}\qquad \sigma_{j}\sigma_{k}=\sigma_{k}\sigma_{j}
 &\quad\text{for }1,\dots,j<k-1,\dots,n-2,
 \end{split}
 \end{equation}
 see Burde and Zieschang~\cite[Prop.~10.2, 10.3]{BZ}.
 Closures of braids (resulting in knots or links)
 are ambient isotopic
 if and only if the braids are \emph{Markov equivalent}.
 The latter means that the braids are connected by
 a finite sequence of braids where two consecutive
 braids are either conjugate or
 related by a \emph{Markov move}, see~\cite[Def.~10.21, Thm.~10.22]{BZ}.
 The latter replaces $\mathfrak z\in\mathcal B_{n-1}$
 by $\mathfrak z\sigma_{n-1}^{\pm1}$.
 
 In the special case of two braids this condition simplifies
 as follows.
 As $\mathcal B_{2}$ has only one generator,
 namely $\overcrossing$ with $(\overcrossing)^{-1}=\undercrossing$,
 in fact $\mathcal B_{2}\cong\Z$,
 conjugate braids are identical.
 As $\mathcal B_{1}=\set1$,
 a Markov move can only be applied to the word $1$,
 thus proving that the closed one-braid, i.e., the round circle,
 is ambient isotopic to the closures of both $\overcrossing$
 and $\undercrossing$, which settles the case $b=b_*=\pm 1.$
 
Assume  now $|b|\ge 3$.
 The braid represented by $\tg_{1}$ and $\tg_{2}$
 is characterized by the braid word $(\overcrossing)^{k}$
 for some $k\in\Z$.
 If $k$ were even we would arrive at a two-component link
 which is impossible.
 Any overcrossing $\overcrossing$
 is equivalent to half a rotation of $a_{\xi}$ in positive
 direction (with respect to the $\mathbf e_{1}$-$\mathbf e_{3}$-plane).
 This gives $k= b$.
 On the other hand,
 one easily checks from~\eqref{eq:torus-knot} that
 a $(2,b)$-torus knot
 has the braid word $\sigma_{1}^{-b}$, $b\in1+2\Z$, $b\ne\pm1$
 (cf.~Artin~\cite[p.~56]{artin}). 
\end{proof}

\begin{corollary}[Torus knots]\label{cor:braids}
 Any embedded $\g\in C^{1}\rzd$ with
 $\norm{\g-\tge_{0}}_{C^{1}} \le \tfrac1{100}$
 is either unknotted or belongs to $\tkc$ for some odd $b\ne\pm1$.
\end{corollary}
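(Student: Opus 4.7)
The plan is to reduce to Proposition~\ref{prop:braids} by applying a small ambient isotopy that separates the two overlapping halves of the doubly-covered circle. The structural Lemmata~\ref{lem:lgr} (local graph representation) and~\ref{lem:zylinder} (two strands in a cylinder) already hold for $\varphi\in[0,\pi]$ and thus apply verbatim at $\varphi=0$; only the handle isotopy (Lemma~\ref{lem:isohandle}) breaks down, as the disjointness estimate~\eqref{eq:disjoint-disks} degenerates when $\sin\br{\varphi/2}=0$.

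First, I invoke Lemma~\ref{lem:zylinder} with $\varphi=0$ to decompose $\g\cap\mathcal Z$ into two graphical strands $\tg_1,\tg_2$ over the $\uni2$-axis; outside $\mathcal Z$, the two exterior handles of $\g$ both lie $C^1$-close to the planar circle $\tge_0(\R/\Z)$. Next, I construct an ambient isotopy $\Phi\colon\R^3\to\R^3$, supported in a thin tubular neighbourhood $N$ of one of these handles and equal to the identity on $\mathcal Z'\cup(\R^3\setminus N)$, that rotates $N$ about an axis parallel to $\uni2$ by a fixed small angle $\varphi_0\in(0,\pi]$. After suitable calibration of $\varphi_0$ and the radial thickness of $N$, the deformed curve $\g^*:=\Phi\circ\g$ will satisfy $\norm{\g^*-\tge_{\varphi_0}}_{C^1}\le\delta_{\eps_\zeta}$ for some admissible $\zeta\in(0,\tfrac{1}{96\pi}]$ fulfilling the hypothesis of Proposition~\ref{prop:braids}.

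Since $\Phi$ is an ambient isotopy, $\g$ and $\g^*$ share their knot type. Applying Proposition~\ref{prop:braids} to $\g^*$ and $\tge_{\varphi_0}$ then classifies $\g^*$, and therefore $\g$, as either unknotted or as an element of $\tkc$ for some odd $|b|\ge3$.

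The main obstacle is the quantitative matching: the threshold $\delta_{\eps_\zeta}$ in Proposition~\ref{prop:braids} contains the factor $\sin\br{\varphi_0/2}$ through~\eqref{eq:epsilon} and shrinks as $\varphi_0\to 0$, while the perturbation $\Phi$ necessarily has $\norm{\Phi-\mathrm{id}}_{C^1}$ of order $\varphi_0$ on~$N$. The reconciliation rests on the fact that $\Phi$ acts only on~$N$ and leaves $\mathcal Z'$ pointwise fixed, so the $C^1$-discrepancy between $\g^*$ and $\tge_{\varphi_0}$ is concentrated in the narrow transition between $N$ and its complement; the explicit hypothesis $\norm{\g-\tge_0}_{C^1}\le\tfrac1{100}$ in the corollary is precisely what provides the slack needed to strike this balance.
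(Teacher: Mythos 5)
Your reduction to Proposition~\ref{prop:braids} has a genuine gap, and it sits exactly where the case $\varphi=0$ differs from $\varphi>0$. For $\varphi\in(0,\pi]$ the two circles of $\tge_{\varphi}$ are separated away from the tangent point (this is what estimate~\eqref{eq:disjoint-disks} quantifies), so the two arcs of $\g$ outside $\mathcal Z$ are genuinely ``unlinked'' handles and all crossings are confined to $\mathcal Z'$. For $\varphi=0$ the two halves of $\tge_{0}$ coincide, so the two strands of $\g$ run within $2\delta$ of \emph{each other} along the entire circle and may braid around one another anywhere on $\R/\Z$ --- indeed this is precisely how $(2,b)$-torus knots with arbitrarily large $\abs b$ arise inside the $\tfrac1{100}$-neighbourhood (take $\tau_{\rho}$ with $\rho$ small compared to $1/\abs b$). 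Consequently there is no ``thin tubular neighbourhood $N$ of one of these handles'' disjoint from the other handle: any such $N$ contains points of both strands, and an isotopy supported in $N$ that rotates it out of the plane either drags both strands along (yielding a curve close to a rotated copy of $\tge_{0}$, not to $\tge_{\varphi_{0}}$) or is not well defined. More fundamentally, a curve $C^{1}$-close to $\tge_{\varphi_{0}}$ has all of its two-strand braiding concentrated in $\mathcal Z'$, whereas in $\g$ that braiding may be distributed around the whole circle; a compactly supported rotation of a tube cannot transport those crossings into $\mathcal Z'$. So the quantitative calibration you flag at the end ($\delta_{\eps_{\zeta}}$ versus $\norm{\Phi-\mathrm{id}}_{C^{1}}$) is not the real obstacle --- the existence of $\Phi$ with the claimed properties is.

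The paper's own argument avoids any reduction. Since every normal disk of the circle $\tge_{0}(\R/\Z)$ is met transversally by $\g$ in exactly two points, the \emph{whole} curve is already a closed two-braid in a solid torus (``a deformed cylinder with its caps glued together''); a Sard-generic projection direction $\nu_{0}$ near $\uni3$, obtained from the Gau{\ss} map $(s,t)\mapsto\frac{\g(s)-\g(t)}{\abs{\g(s)-\g(t)}}$, yields a diagram with only over- and undercrossings, hence a braid word $\sigma_{1}^{k}$ with $k$ odd, whose closure is the unknot for $k=\pm1$ and a $(2,k)$-torus knot otherwise. If you wish to salvage a reduction-type proof you would first have to extract this global braid structure anyway, at which point the detour through Proposition~\ref{prop:braids} buys nothing.
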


\begin{sketch}
 We argue similarly to the preceding argument.
 The image of $\tge_{0}$ coincides with the circle of radius $\nfrac1{4\pi}$
 in the $\mathbf e_{1}$-$\mathbf e_{2}$-plane centered at $\nfrac{\mathbf e_{1}}{8\pi}$.
 Consider the $\frac1{100}$-neighborhood of $\tge_{0}$
 fibred by the normal disks of this circle. 
 Any of these normal disks is transversally met by $\g$ in precisely two points.
 Consider the Gau{\ss} map $\mathfrak n:\R/\Z\times\R/\Z\to\S^{2}$,
 $(s,t)\mapsto\frac{\g(s)-\g(t)}{\abs{\g(s)-\g(t)}}$.
 Off the diagonal, this map is well-defined and $C^{1}$.
 Sard's lemma gives the existence of some $\nu_{0}\in\S^{2}$
 arbitrarily close to $\mathbf e_{3}$,
 such that any crossing of $\g$ in the projection onto $\nu_{0}^{\perp}$
 is either an over- or an undercrossing.
 Here we face the situation of a deformed cylinder with its caps
 glued together.
 By stretching and deforming, we arrive at a usual braid representation.
 We conclude as before.
\end{sketch}


\section{Comparison  $(2,b)$-torus knots and energy estimates}\label{sect:torus}

Let $a,b\in\Z\setminus\set{-1,0,1}$ be coprime, i.e.,\@ $\gcd(\abs a,\abs b)=1$.
The \emph{$(a,b)$-torus knot class} $\tkc[a,b]$ contains the one-parameter
family of curves
\begin{equation}\label{eq:torus-knot}
 \tau_\rho:t\mapsto
 \begin{pmatrix}
  (1+\rho\cos(bt)) \cos(at) \\
  (1+\rho\cos(bt)) \sin(at) \\
     \rho          \sin(bt)
 \end{pmatrix}, \quad t\in\R/2\pi\Z,
\end{equation}
where the parameter $\rho\in(0,1)$ can be chosen arbitrarily.
For information on torus knots we refer to Burde and Zieschang~\cite[Chapters~3~E, 6]{BZ}.
 As $\tkc[-a,b]=\tkc[a,-b]$ \cite[Prop.~3.27]{BZ},
it suffices to consider $a>1$.
Since $\tkc[a,-b]$ contains the mirror images of $\tkc[a,b]$
we may also, keeping in mind this symmetry, pass to $b>1$.
Note, however, that the latter classes are in fact disjoint,
i.e., torus knots are not amphicheiral~\cite[Thm.~3.29]{BZ}.

We will later restrict to $a=2$; in this case $\gcd(2,b)=1$
holds for any odd $b$ with $|b|\ge 3$.
 The two (mirror-symmetric) trefoil knot classes
coincide with the $(2,\pm3)$-torus knot classes.
\begin{figure}
 \centering
 \includegraphics{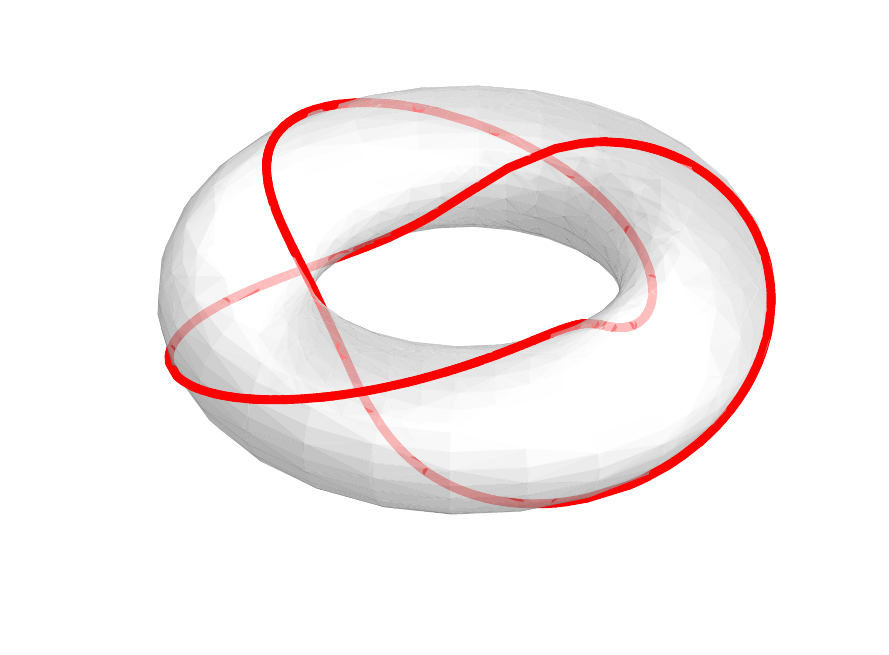}
 \caption{Plot of $\tau_{1/3}$ for the trefoil knot class $\tkc[2,3]$}
\end{figure}

The \emph{total curvature} of a given curve $\g\in H^2(\R/L\Z,\R^3)$, $L>0$, is given by
\begin{equation}\label{eq:tc}
\TC(\g)=\int_{\g}\kappa\d s = \int_{\R/L\Z} {\kappa} \abs{\dg}\d t = \int_0^{L} \frac{\abs{\ddg\wedge\dg}}{\abs{\dg}^2}\d t
 \stackrel{\abs\dg\equiv1}=
 \int_0^{L} {\abs{\ddg}}\d t.
\end{equation}

The torus knots $\tau_\rho$ introduced in~\eqref{eq:torus-knot}
lead to a family of comparison curves that approximate
the $a$-times covered circle $\tau_{0}$ with respect to the $C^{k}$-norm
for any $k\in\N$ as $\rho\searrow0$.
Rescaling and reparametrizing to arc-length
we obtain
$\tilde\tau_\rho\in\cC({{\tkc[a,b]}})$.
As this does not destroy $H^{2}$-convergence~\cite[Thm.~A.1]{reiter:rkepdc},
we find that the arclength parametrization $\tilde\tau_{0}$ of the
$a$-times covered circle lies in the (strong) $H^2$-closure of
$\cC({{\tkc[a,b]}})$.

\begin{lemma}[Bending energy estimate for comparison torus knots]\label{lem:tau-estimate}
 There is a constant $C=C(a,b)$ such that
 \begin{equation}\label{eq:tau-estimate}
  \Eb(\tilde\tau_\rho)\le (2\pi a)^2 + C\rho^2 \qquad\text{for all }\rho\in\sq{0,\tfrac{a}{4\sqrt{a^2+b^2}}}\,.
 \end{equation}
\end{lemma}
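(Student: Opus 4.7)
The plan combines a scaling identity with a reflection symmetry. Since $\Eb$ is reparametrization-invariant and transforms as $\Eb(\lambda\g)=\lambda^{-1}\Eb(\g)$ under rescaling by $\lambda>0$, the rescaled-and-reparametrized unit-length curve $\tilde\tau_\rho$ satisfies
\[ \Eb(\tilde\tau_\rho) = L(\rho)\,\Eb(\tau_\rho), \qquad L(\rho):=\length(\tau_\rho)=\int_0^{2\pi}\abs{\tau_\rho'(t)}\,\d t. \]
A direct differentiation of~\eqref{eq:torus-knot} gives
\[ \abs{\tau_\rho'(t)}^2 = a^2\br{1+\rho\cos(bt)}^2+b^2\rho^2, \]
so for $\rho$ in the stated range (in particular $\rho\le\tfrac14$) we have $\abs{\tau_\rho'}\ge\nfrac{3a}4>0$; $\tau_\rho$ is regular and all relevant integrands depend jointly smoothly on $(t,\rho)$.

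The key observation is an evenness in $\rho$: the identities $\cos(b(t+\pi/b))=-\cos(bt)$ and $\sin(b(t+\pi/b))=-\sin(bt)$ show that the shift $t\mapsto t+\pi/b$ followed by rotation by angle $a\pi/b$ about the $z$-axis maps $\tau_\rho$ onto $\tau_{-\rho}$. Since isometries and reparametrizations preserve both length and bending energy, $L(-\rho)=L(\rho)$ and $\Eb(\tau_{-\rho})=\Eb(\tau_\rho)$, so
\[ f(\rho):=\Eb(\tilde\tau_\rho)=L(\rho)\,\Eb(\tau_\rho) \]
is an even, smooth function of $\rho$ on $\sq{-\nfrac a{4\sqrt{a^2+b^2}},\nfrac a{4\sqrt{a^2+b^2}}}$. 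Using that $\tau_0$ is the $a$-times covered unit circle, $L(0)=2\pi a$ and $\Eb(\tau_0)=2\pi a$, so $f(0)=(2\pi a)^2$, and by evenness $f'(0)=0$.

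Taylor's theorem with integral remainder then gives
\[ f(\rho)-f(0) = \int_0^\rho(\rho-\sigma)\,f''(\sigma)\,\d\sigma \le \tfrac{\rho^2}{2}\sup_{\sigma\in[0,\nfrac a{4\sqrt{a^2+b^2}}]}\abs{f''(\sigma)}, \]
and setting $C:=\tfrac12\sup\abs{f''}$ yields~\eqref{eq:tau-estimate}. The main obstacle is ensuring that $\sup\abs{f''}$ is finite with a bound depending only on $a,b$: this reduces to differentiating twice under the integrals $L(\rho)=\int_0^{2\pi}\abs{\tau_\rho'}\,\d t$ and $\Eb(\tau_\rho)=\int_0^{2\pi}\abs{\tau_\rho''\wedge\tau_\rho'}^2\abs{\tau_\rho'}^{-5}\,\d t$, which is legitimate because the uniform bound $\abs{\tau_\rho'}\ge\nfrac{3a}4$ keeps the integrands and their $\rho$-derivatives up to order two continuous, hence bounded on the compact set $[0,2\pi]\times\sq{0,\nfrac a{4\sqrt{a^2+b^2}}}$. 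As an alternative (somewhat more tedious) route one could expand $\abs{\tau_\rho'}^2$ and $\abs{\tau_\rho''\wedge\tau_\rho'}^2$ as polynomials in $\rho$ and verify directly that the linear-in-$\rho$ terms in $L(\rho)\Eb(\tau_\rho)$ cancel upon using $\int_0^{2\pi}\cos(bt)\,\d t=\int_0^{2\pi}\sin(bt)\,\d t=0$; the symmetry argument above circumvents this calculation.
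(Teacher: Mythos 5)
Your proof is correct, and while it shares the paper's skeleton---the scaling identity $\Eb(\tilde\tau_\rho)=\length(\tau_\rho)\,\Eb(\tau_\rho)$, the uniform lower bound on $\abs{\tau_\rho'}$, and a second-order expansion at $\rho=0$---it replaces the paper's key computational step by a symmetry. The paper computes $\abs{\tau_\rho''}^2$ and $\sp{\tau_\rho'',\tau_\rho'}$ explicitly, expands $\abs{\tau_\rho'}^{-3}$ to first order in $\rho$, and observes that the linear terms integrate to zero because $\int_0^{2\pi}\cos(bt)\d t=0$; you instead note that the shift $t\mapsto t+\nfrac\pi b$ composed with a rotation about the $\mathbf e_{3}$-axis carries $\tau_\rho$ onto $\tau_{-\rho}$ (indeed $\cos(bt+\pi)=-\cos(bt)$, $\sin(bt+\pi)=-\sin(bt)$, and rotating by $-a\pi/b$ restores the angular factor), so that $f(\rho)=\length(\tau_\rho)\Eb(\tau_\rho)$ is even and smooth near $0$, forcing $f'(0)=0$; Taylor's theorem with integral remainder then yields the $C\rho^2$ bound with $C=\tfrac12\sup\abs{f''}$, which is finite by compactness since $\abs{\tau_\rho'}\ge\nfrac{3a}4$ keeps all integrands and their $\rho$-derivatives continuous on $[0,2\pi]\times[-\rho_{0},\rho_{0}]$. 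Your route buys brevity and a conceptual explanation of \emph{why} no linear term appears; the paper's explicit computation buys an in-principle computable constant and, more importantly, the formulas for $\abs{\tau_\rho'}^2$ and $\abs{\tau_\rho''}^2$ together with the bound $\abs{\tau_\rho'}^2\ge\tfrac7{16}a^2$, which are reused verbatim in the proof of the ropelength estimate (Proposition~\ref{prop:torus-rate}) to obtain the uniform curvature bound there. Either way the lemma is established.
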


\begin{proof}
 We begin with computing the first derivatives of $\tau_\rho$,
 \begin{align}
 \tau_\rho'(t)
 &=
 \begin{pmatrix}
  -b\rho\sin(bt)\cos(at)-a(1+\rho\cos(bt))\sin(at) \\
  -b\rho\sin(bt)\sin(at)+a(1+\rho\cos(bt))\cos(at) \\
  b\rho\cos(bt)
 \end{pmatrix}, \notag\\
 \tau_\rho''(t)
 &=  -
 \begin{pmatrix}
  \br{a^2+(a^2+b^2)\rho\cos(bt)} \cos(at) - 2ab\rho\sin(bt)\sin(at) \\
  \br{a^2+(a^2+b^2)\rho\cos(bt)} \sin(at) + 2ab\rho\sin(bt)\cos(at) \\
  b^2\rho\sin(bt)
 \end{pmatrix}, \notag\\
 \abs{\tau_\rho'(t)}^2
 &=
 b^2\rho^2 + a^2\br{1+\rho\cos(bt)}^2\notag \\
 &= a^2  + 2a^2\cos(bt)\rho + \br{b^2+a^2\cos^2(bt)}\rho^2, \label{first-deriv-taurho}\\
 \abs{\tau_\rho''(t)}^2
 &=
 a^4 + 2a^2(a^2+b^2)\cos(bt)\rho + \sq{(4a^2+b^2)b^{2} + a^{2}(a^2-2b^2)\cos^{2}(bt)}\rho^2,\notag \\
 \sp{\tau_\rho''(t),\tau_\rho'(t)}
 &=
 -a^2b\rho\sin(bt)\br{1+\rho\cos(bt)},\notag
 \end{align}
 which by means of the Lagrange identity implies
 \begin{align*}
 \Eb(\tau_\rho)
 &=
 \int_0^{2\pi}\frac{\abs{\tau_\rho''(t)\wedge\tau_\rho'(t)}^2}{\abs{\tau_\rho'(t)}^5}\d t
 =\int_0^{2\pi}\frac{\abs{\tau_\rho''(t)}^2\abs{\tau_\rho'(t)}^2-\sp{\tau_\rho''(t),\tau_\rho'(t)}^2}{\abs{\tau_\rho'(t)}^5}\d t \\
 &=\int_0^{2\pi}\br{\frac{\abs{\tau_\rho''(t)}^2}{\abs{\tau_\rho'(t)}^3}-\frac{\sp{\tau_\rho''(t),\tau_\rho'(t)}^2}{\abs{\tau_\rho'(t)}^5}}\d t.
 \end{align*}
 As 
\begin{equation}\label{abl-reg}
\abs{\tau_\rho'(t)}^2 \ge \tfrac{7}{16}a^2\quad\textnormal{
because $(a^{2}+b^{2})\rho^{2}\le \tfrac1{16}a^2$ and $2a^{2}\rho\le \tfrac12a^{2}$}
\end{equation}
 the subtrahend in $\Eb(\tau_\rho)$
and the third term in the expression for $|\tau_\rho''|^2$ are bounded by $C\rho^2$ uniformly in~$t$.
 Expanding $z^{-3/2} = 1-\tfrac32z+\mathcal{O}(z^2)$ as $z\to 0$
 we derive
 \[ |\tau_\rho'(t)|^{-3}
 = {\br{b^2\rho^2 + a^2\br{1+\rho\cos(bt)}^2}^{-3/2}}
 = \frac1{a^{3}} - \frac{3}{a^{3}}\cos\br{bt}\rho + \mathcal O(\rho^{2}), \]
 which yields
 \begin{align*}
 \Eb(\tau_\rho) = &\int_0^{2\pi}\br{a^4 + 2a^2(a^2+b^2)\cos(bt)\rho}\br{\frac1{a^{3}} - \frac{3}{a^{3}}\cos\br{bt}\rho + \mathcal O(\rho^{2})}\d t + 
\mathcal{O}(\rho^2)\\
  = &\int_0^{2\pi}\br{a + \tfrac{2b^2-a^2}{a}\cos\br{bt}\rho}\d t + \mathcal{O}(\rho^2)=2\pi a+ \mathcal{O}(\rho^2)\quad\textnormal{as $\rho\searrow 0$.}
 \end{align*}
 Finally, in order to pass to $\tilde\tau_\rho$, we recall that $\Eb$ is invariant under reparametrization
 and $\Eb(r\g)=r^{-1}\Eb(\g)$ for $r>0$. So the claim follows for $r:=\length(
\tau_\rho)^{-1}$ by
 \begin{align*}
  \length(\tau_\rho)
  &= \int_0^{2\pi} \abs{\tau_\rho'(t)}\d t
  = \int_0^{2\pi} \sqrt{b^2\rho^2 + a^2\br{1+\rho\cos(bt)}^2}\d t \\
  &= \int_0^{2\pi} \br{a + a\cos(bt)\rho + \mathcal O(\rho^2)}\d t
  = 2\pi a + \mathcal O(\rho^2)\quad\textnormal{as $\rho\to 0$.}
 \end{align*}
\end{proof}

\begin{proposition}[Ropelength estimate for comparison torus knots]\label{prop:torus-rate}
We have
\begin{equation*}
 \Er(\tilde\tau_\rho) \le \frac C\rho \qquad
 \text{for }0<\rho\ll1,
\end{equation*}
where $C$ is a constant depending only on $a$ and $b$.
\end{proposition}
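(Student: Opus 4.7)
The plan is to bound the thickness $\triangle[\tilde\tau_\rho]$ from below by a constant multiple of $\rho$; since $\Er(\tilde\tau_\rho)=1/\triangle[\tilde\tau_\rho]$ this yields the claim. Because thickness is invariant under reparametrization and scales homothetically under rescaling, it suffices to show $\triangle[\tau_\rho]\ge c\rho$ for the original parametrization \eqref{eq:torus-knot}; the bounded length $\length(\tau_\rho) = 2\pi a + \mathcal O(\rho^{2})$ established at the end of the proof of Lemma~\ref{lem:tau-estimate} then absorbs the rescaling factor.

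I would invoke the Gonzalez--Maddocks characterization \cite{gm}: for an embedded $C^{1,1}$ curve
\[
\triangle[\g] = \min\!\left(\tfrac{1}{\kappa_{\max}(\g)},\ \tfrac12\,\textnormal{dcsd}[\g]\right),
\]
where $\textnormal{dcsd}[\g]$ denotes the infimum of $\abs{\g(s)-\g(t)}$ over $s\ne t$ at which the chord $\g(s)-\g(t)$ is orthogonal to both tangents $\g'(s)$ and $\g'(t)$. The curvature part is straightforward from the formulas gathered in the proof of Lemma~\ref{lem:tau-estimate}: using $\abs{\tau_\rho'}^{2}\ge\tfrac{7}{16}a^{2}$ (cf.~\eqref{abl-reg}) and $\abs{\tau_\rho''}\le a^{2}+\mathcal O(\rho)$, one obtains $\kappa_{\tau_\rho}\le C$ uniformly in $\rho\in[0,\rho_{0}]$, so $1/\kappa_{\max}\ge c\rho$ for small $\rho$.

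For the doubly-critical self-distance, a short computation using double-angle identities yields
\[
\abs{\tau_\rho(s)-\tau_\rho(t)}^{2} = 4\rho^{2}\sin^{2}\!\tfrac{b(s-t)}{2} + 4(1+\rho\cos bs)(1+\rho\cos bt)\sin^{2}\!\tfrac{a(s-t)}{2}.
\]
The second summand vanishes only at $s-t=2\pi k/a$, $k\in\{1,\dots,a-1\}$ (modulo $2\pi$); coprimality $\gcd(a,b)=1$ then forces $bk\not\equiv 0\bmod a$, so $\abs{\sin(\pi bk/a)}\ge\sin(\pi/a)$ and the first summand contributes $\abs{\tau_\rho(s)-\tau_\rho(t)}\ge 2\rho\sin(\pi/a)$. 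Continuity extends this bound to a fixed neighbourhood of each such parameter separation, and away from these neighbourhoods the second summand alone dominates by a $\rho$-independent constant. Near the diagonal the chord is asymptotically tangential rather than perpendicular, so a standard implicit-function argument rules out doubly-critical pairs with tiny parameter separation for small $\rho$. Combining these, $\textnormal{dcsd}[\tau_\rho]\ge c'\rho$, hence $\triangle[\tau_\rho]\ge\min(1/C,c'\rho/2)\ge c''\rho$, which completes the proof.

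The main technical obstacle is the exclusion of new doubly-critical pairs near the diagonal as $\rho\downarrow 0$; this reduces to showing that doubly-critical pairs of $\tau_\rho$ depend continuously on $\rho$ and arise only as perturbations of the ``coincidence pairs'' of the $a$-times covered circle $\tau_{0}$, which lie precisely at parameter separations $2\pi k/a$, $k=1,\dots,a-1$.
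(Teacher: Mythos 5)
Your overall strategy coincides with the paper's: reduce to $\triangle[\tau_\rho]\ge c\rho$ via the characterization of thickness as $\min\left(1/\kappa_{\max},\tfrac12\mathrm{dcsd}\right)$ (which, incidentally, is Theorem~1 of Litherland et al.~\cite{lsdr} rather than \cite{gm}), bound the curvature uniformly in $\rho$, and bound chord lengths from below by $c\rho$ away from the diagonal. Your explicit identity $\abs{\tau_\rho(s)-\tau_\rho(t)}^2=4\rho^2\sin^2\tfrac{b(s-t)}{2}+4(1+\rho\cos bs)(1+\rho\cos bt)\sin^2\tfrac{a(s-t)}{2}$ is correct, and the ensuing case distinction (separations near $2\pi k/a$ with $k\not\equiv0$, where coprimality gives $\abs{\sin(\pi bk/a)}\ge\sin(\pi/a)$ and hence distance $\ge c\rho$, versus separations bounded away from all $2\pi k/a$, where the second summand dominates) is a cleaner and more explicit route than the paper's, which instead argues geometrically that the angle function $s\mapsto bs$ sweeps through disjoint arcs of length $\pi/a$ on the surface of the $\rho$-torus.

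The one step you should not delegate to ``a standard implicit-function argument'' is the exclusion of doubly-critical pairs near the diagonal. Continuity of the set of doubly-critical pairs in $\rho$ is exactly the delicate point, because the limit $\tau_0$ is the non-embedded $a$-fold circle, whose doubly-critical pairs include all antipodal pairs in addition to the coincidence pairs; so your closing claim that they ``arise only as perturbations of the coincidence pairs'' is not accurate, and a perturbation argument from a non-embedded limit is shaky. No such argument is needed: the paper's Lemma~\ref{lem:qtb} settles this with a two-line Taylor expansion, showing that for any arclength-parametrized curve with curvature $\le\kappa_0$ one has $\abs{\sp{\g(s)-\g(t),\g'(t)}}\ge w(1-\kappa_0 w)\ge\tfrac9{10}w$ for arc-length separation $w<1/(10\kappa_0)$, so the chord--tangent angle stays below $\pi/6$ and no doubly-critical pair can occur there. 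Since you already have the uniform curvature bound, substituting this quantitative estimate for your implicit-function step closes the gap and makes your proof complete.
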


\begin{proof}
  The invariance of $\Er$ under reparametrization, scaling, and translation
 implies $\Er(\tilde\tau_\rho)=\Er(\tau_\rho)$. 
 According to Lemma~\ref{lem:tau-estimate} above, the
 squared curvature of~$\tau_{\rho}$
 amounts to
 \[ \frac{\abs{\tau_\rho''(t)\wedge\tau_\rho'(t)}^{2}}{\abs{\tau_\rho'(t)}^6}
  \le \frac{\abs{\tau_\rho''(t)}^2}{\abs{\tau_\rho'(t)}^4} \]
 which is uniformly bounded independent of $\rho$ and $t$ as long as
$\rho$ is in the range required in~\eqref{eq:tau-estimate} by some $\kappa_{0}>0$ (combine \eqref{first-deriv-taurho} with \eqref{abl-reg}).
 By $\length(\tau_{\rho}|_{[s,t]})$ we denote the length
 of the shorter subarc of~$\tau_{\rho}$ connecting the points
 $\tau_{\rho}(s)$
 and $\tau_{\rho}(t)$.
 As $\abs{\tau_{\rho}'}$ uniformly converges to $a$ 
as $\rho\to 0$ (see \eqref{first-deriv-taurho}),
 we may find some $\rho_{0}\in\left(0,\tfrac{a}{4\sqrt{a^2+b^2}}\right]$ 
such that
 $a\abs{s-t}_{\R/2\pi\Z}\ge\frac12\length(\tau_{\rho}|_{[s,t]})$
 for any $\rho\in(0,\rho_{0}]$.
 The estimate
 \begin{equation}\label{eq:lower-dist-bound}
  \abs{\tau_{\rho}(s)-\tau_{\rho}(t)} \ge c\rho
  \quad\text{for any }s,t\in\R/2\pi\Z \text{ with }
  \abs{s-t}_{\R/2\pi\Z}\ge\tfrac1{20a\kappa_{0}}
  \text{ and }\rho\in(0,\rho_{0}]
 \end{equation}
 which will be proven below for some uniform $c>0$ now implies
 \[ \abs{\tau_{\rho}(s)-\tau_{\rho}(t)} \ge c\rho
 \quad\text{for any }s,t\in\R/2\pi\Z \text{ with }
 \mathscr L(\tau_\rho|_{[s,t]})\ge\tfrac1{10\kappa_0}
  \text{ and }\rho\in(0,\rho_{0}]. \]
 As $\length(\tau_{\rho})$ is also uniformly bounded, we may apply
 Lemma~\ref{lem:qtb} below, which yields the desired.
 
 It remains to verify~\eqref{eq:lower-dist-bound}.
 Recall that the limit curve $\tau_{0}$ is an $a$-times covered circle
 parametrized by uniform speed $a$. Fix $t\in\R/2\pi\Z$ and
 consider the image of~$\tau_{\rho}(s)$ restricted to
 $s\in\R/2\pi\Z$ with
 $\abs{s-t}_{\R/\frac{2\pi}a\Z}<\frac\pi{2ab}$,
 i.e.\@ 
 $s\in t+\frac{2\pi}a\Z+(-\frac\pi{2ab},\frac\pi{2ab})$,
 see Figure~\ref{fig:torus-reg}.
 It consists of $a$ disjoint arcs (on the $\rho$-torus,
 in some neighborhood of $\tau_{0}(t)$).
 The associated angle function~$s\mapsto bs$ (see~\eqref{eq:torus-knot})
 strides across \emph{disjoint} regions of length $\frac\pi a$ on $\R/2\pi\Z$ for each of
 those arcs.
 These regions have positive distance $\ge\frac\pi{a}$
 on the surface of the $\rho$-torus which
 leads to a uniform positive lower bound on $\frac1\rho\abs{\tau_{\rho}(s)-\tau_{\rho}(t)}$
 for $s\in\R/2\pi\Z$ where
 $\abs{s-t}_{\R/\frac{2\pi}a\Z}<\frac\pi{2ab}$ but
 $\abs{s-t}_{\R/2\pi\Z}>\frac\pi{2ab}$.
 The latter restriction reflects the fact that each arc has zero
 distance to itself.

 \begin{figure}
 \centering
 \includegraphics{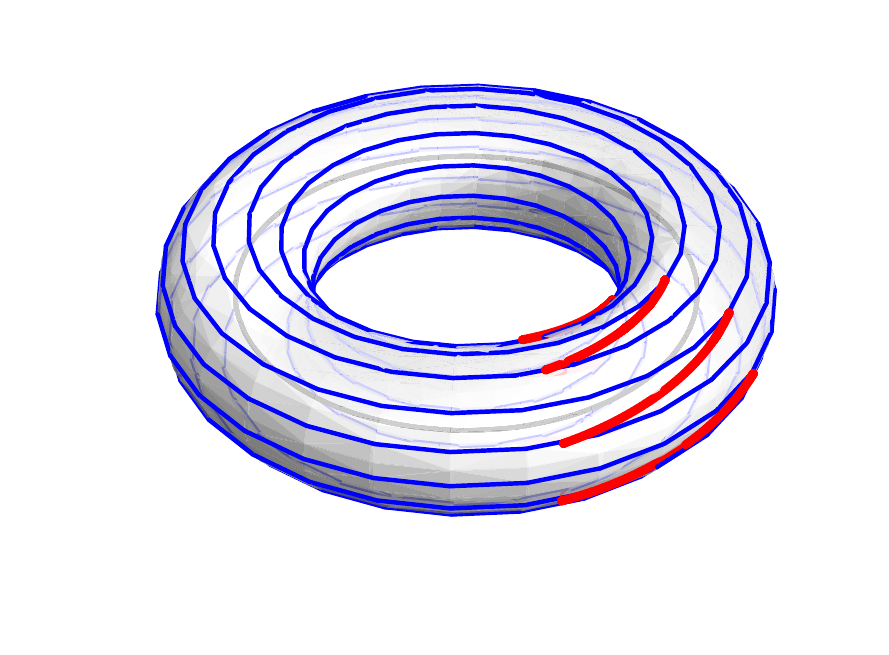}
 \caption{Plot of the regions of $\tau_{1/3}$
 for $a=7$, $b=3$ (red; with respect to a fixed $t\in\R/\Z$)
 mentioned in the proof of Proposition~\ref{prop:torus-rate}.
 The blue lines  visualize the
 (intrinsic) distance between the strands.}
 \label{fig:torus-reg}
\end{figure}
 
 If, on the other hand,
 $\abs{s-t}_{\R/\frac{2\pi}a\Z}\ge\lambda:=\min\br{\frac\pi{2ab},\frac1{20a\kappa_{0}}}$,
 we may derive
 \begin{align*}
  \abs{\tau_{\rho}(s)-\tau_{\rho}(t)}
  \ge\abs{\tau_{0}(s)-\tau_{0}(t)} - 2\rho
  &\ge2\sin\br{\tfrac{a}{2}\abs{s-t}_{\R/2\pi\Z}}-2\rho_{0} \\
  &\ge2\sin\br{\tfrac{a}{2}\abs{s-t}_{\R/\frac{2\pi}a\Z}}-2\rho_{0}
  \ge2\sin{\tfrac{a\lambda}{2}}-2\rho_{0}.
 \end{align*}
 Diminishing $\rho_{0}$ if necessary, the right-hand side is positive for all
 $\rho\in(0,\rho_{0}]$.
\end{proof}

\begin{lemma}[Quantitative thickness bound]\label{lem:qtb}
 Let $\g\in C^{2}\rzd$ be a regular curve with
 uniformly bounded curvature $\kappa\le\kappa_{0}$, $\kappa_{0}>0$, and
 assume that
 \[ \delta := \inf\sett{\abs{\g(s)-\g(t)}}{s,t\in\R/\Z,\mathscr L(\g|_{[s,t]})\ge\tfrac1{10\kappa_0}} \]
 is positive
 where $\length(\g|_{[s,t]})$ denotes the length of the shorter sub-arc of~$\g$
 joining the points $\g(s)$ and $\g(t)$.
 Then $\triangle[\g]\ge\min\br{\tfrac\delta2,\frac1{\kappa_{0}}}>0$, thus
 \[ \Er(\g)\le{\max\br{\tfrac2\delta,\kappa_0}}{\mathscr L(\g)}<\infty. \]
\end{lemma}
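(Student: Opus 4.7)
The plan is to reduce the lemma to the Gonzalez--Maddocks characterization of thickness~\cite{gm} for $C^{2}$ curves,
\[
\triangle[\g] = \min\br{\tfrac{1}{\sup\kappa},\,\tfrac{1}{2}\,\mathrm{dcsd}[\g]},
\]
where $\mathrm{dcsd}[\g]$ denotes the \emph{doubly critical self-distance}, i.e.\ the infimum of $\abs{\g(s)-\g(t)}$ over those pairs $s\neq t$ for which the chord $\g(s)-\g(t)$ is perpendicular to both unit tangents $\dg(s)$ and $\dg(t)$. The hypothesis $\kappa\le\kappa_{0}$ immediately yields $1/\sup\kappa\ge 1/\kappa_{0}$, so the whole matter reduces to proving $\mathrm{dcsd}[\g]\ge\delta$.

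To establish this, I would show that the shorter sub-arc between any two members of a doubly critical pair has length at least $\pi/(2\kappa_{0})$, which comfortably exceeds the threshold $1/(10\kappa_{0})$ of the hypothesis. Reparametrize this shorter sub-arc by arc-length on $[0,\ell]$ with $\g(0)=\g(s)$, $\g(\ell)=\g(t)$, and suppose for contradiction that $\ell<\pi/(2\kappa_{0})$. The angle $\theta(\sigma)$ between the unit tangents $\dg(\sigma)$ and $\dg(0)$ is bounded by the total turning, and therefore $\theta(\sigma)\le\int_{0}^{\sigma}\abs{\ddg(\tau)}\,d\tau\le\kappa_{0}\sigma<\pi/2$, whence
\[
(\g(\ell)-\g(0))\cdot\dg(0) = \int_{0}^{\ell}\dg(\sigma)\cdot\dg(0)\,d\sigma = \int_{0}^{\ell}\cos\theta(\sigma)\,d\sigma \ge \int_{0}^{\ell}\cos(\kappa_{0}\sigma)\,d\sigma = \frac{\sin(\kappa_{0}\ell)}{\kappa_{0}} > 0,
\]
contradicting the double-criticality condition that this inner product vanish. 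Hence every doubly critical pair has shorter-arc separation $\ge\pi/(2\kappa_{0})>1/(10\kappa_{0})$, and the hypothesis of the lemma yields $\abs{\g(s)-\g(t)}\ge\delta$, so $\mathrm{dcsd}[\g]\ge\delta$. Combined with $1/\sup\kappa\ge 1/\kappa_{0}$ this gives $\triangle[\g]\ge\min(\delta/2,1/\kappa_{0})$, and the ropelength bound $\Er(\g)=\mathscr L(\g)/\triangle[\g]\le\max(\kappa_{0},2/\delta)\,\mathscr L(\g)$ follows at once.

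The only substantive input is the Gonzalez--Maddocks identification of thickness with the minimum of the local osculating radius and half the doubly-critical self-distance. A self-contained alternative would work directly from $\triangle[\g]=\inf R(\g(u),\g(v),\g(w))$, splitting on whether the three parameters $u,v,w$ contain a pair with shorter-arc separation $\ge 1/(10\kappa_{0})$: the first case is disposed of by the law of sines ($2R\ge\abs{\g(u)-\g(v)}\ge\delta$), while in the second all three points lie on a sub-arc of length $<1/(5\kappa_{0})<\pi/(2\kappa_{0})$ and $R\ge 1/\kappa_{0}$ follows from a Schur-type comparison with a planar circle of curvature $\kappa_{0}$ together with an estimate of the deflection angle at the middle point via the tangent's total turning. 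The main obstacle along this direct route is the trigonometric bookkeeping required to reach the sharp constant $1/\kappa_{0}$ rather than $1/(2\kappa_{0})$; the Gonzalez--Maddocks shortcut avoids it cleanly.
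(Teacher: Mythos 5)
Your proof is correct and follows essentially the same route as the paper's: both reduce the statement to the characterization $\triangle[\g]=\min\br{1/\max\kappa,\tfrac12\,\mathrm{dcsd}}$ (which the paper takes from Litherland et al.~\cite{lsdr}, not from~\cite{gm}) and then show that a doubly critical pair cannot occur at small arclength separation, so that the hypothesis on $\delta$ controls the doubly critical self-distance. The only difference is the elementary estimate ruling out nearby doubly critical pairs --- you integrate $\cos\theta(\sigma)\ge\cos(\kappa_{0}\sigma)$ to get the sharper threshold $\pi/(2\kappa_{0})$, while the paper uses a Taylor expansion with integral remainder to get $\abs{\sp{\g(s)-\g(t),\dg(t)}}\ge\tfrac9{10}\abs{s-t}$ on the range $\abs{s-t}\le\tfrac1{10\kappa_{0}}$ --- and both suffice.
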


\begin{proof}
 As the quantities in the statement do not depend on the actual
 parametrization and distances, thickness, and the reciprocal
 curvature are positively homogeneous of degree one,
 there is no loss of generality in assuming arc-length parametrization.
 According to~\cite[Thm.~1]{lsdr}, the thickness equals
 the minimum of $\nfrac1{\max\kappa}\ge\nfrac1{\kappa_{0}}$
 and one half of the \emph{doubly critical self-distance}, that is,
 the infimum over all distances $\abs{\g(s)-\g(t)}$
 where $s,t\in\R/\Z$ satisfy $\dg(s)\perp\g(s)-\g(t)\perp\dg(t)$.
 By our assumption we only need to show that the doubly critical self-distance
is not attained on the parameter range where
  $\length(\g|_{[s,t]})=\abs{s-t}_{\R/\Z}<\tfrac1{10\kappa_0}$.
 
 To this end we show that any angle between
 $\dg(t)$ and $\g(s)-\g(t)$
 is  smaller than $\frac\pi6<\frac\pi2$ if $\abs{s-t}_{\R/\Z}\le\tfrac1{10\kappa_0}$.
 We obtain for $w:=\abs{s-t}_{\R/\Z}\in\sq{0,\frac1{10\kappa_{0}}}$
 (note that $\frac1{10\kappa_{0}}<\frac12$ by Fenchel's theorem)
 \begin{align*}
  \abs{\sp{\g(s)-\g(t),\dg(t)}}
  &= \abs{(s-t) + (s-t)^{2}\int_{0}^{1}(1-\th)\sp{\ddg(t+\th(s-t)),\dg(t)}\d\th} \\
  &\ge w\br{1-\kappa_{0}w} \ge \tfrac9{10}w.
 \end{align*}
 Using the Lipschitz continuity of~$\g$, this yields
 for the angle $\a\in[0,\frac\pi2]$ between the lines parallel to
 $\g(s)-\g(t)$ and $\dg(t)$
 \[ \cos\a
 = \frac{\abs{\sp{\g(s)-\g(t),\dg(t)}}}{\abs{\g(s)-\g(t)}}
 \ge \tfrac9{10} > \tfrac12\sqrt3=\cos\tfrac\pi6
 \qquad\Longrightarrow\qquad\a<\tfrac\pi6. \]
\end{proof}

Combining  Lemma~\ref{lem:tau-estimate} with the previous ropelength
estimate we can use the comparison torus knots to obtain non-trivial
growth estimates on the total energy $\Eth$ and on the ropelength
$\Er$ of minimizers $\g_\th$.

\begin{proposition}[Total energy growth rate for minimizers]\label{prop:estimate-torus}
 For $a,b\in\Z\setminus\set{-1,0,1}$ with $\gcd(\abs a,\abs b)=1$ there is a positive constant  $C=C(a,b)$ such that
 any sequence $\seq[\th>0]\gth$ of $E_\th$-minimizers in $\cC(\mathcal{T}(a,b))$ satisfies
 \begin{equation}\label{eq:energy-growth-rate}
  \Eth(\gth) \le (2a\pi)^{2}+C{\th^{2/3}}
 \end{equation}
  and if $a=2$
 \begin{equation}\label{eq:ropelength-growth-rate}
  \Er(\gth) \le C{\th^{-1/3}}.
 \end{equation}
 \end{proposition}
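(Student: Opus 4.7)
The plan is to use the explicit comparison curves $\tilde\tau_\rho\in\cC(\tkc[a,b])$ from~\eqref{eq:torus-knot}, combine the bending-energy bound of Lemma~\ref{lem:tau-estimate} with the ropelength bound of Proposition~\ref{prop:torus-rate}, and then optimize the parameter $\rho$ against $\th$.

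By minimality of $\gth$, for every admissible $\rho>0$ I would write
\[
\Eth(\gth)\le\Eth(\tilde\tau_\rho)=\Eb(\tilde\tau_\rho)+\th\,\Er(\tilde\tau_\rho)\le(2\pi a)^{2}+C_{1}\rho^{2}+\frac{C_{2}\th}{\rho}.
\]
An elementary one-variable minimization of the last two terms gives the optimizer $\rho_{*}=\br{C_{2}\th/(2C_{1})}^{1/3}\asymp\th^{1/3}$, at which both penalty terms are of order $\th^{2/3}$. Plugging $\rho=\rho_{*}$ (which lies in the common admissible range of both comparison estimates once $\th$ is small enough) yields~\eqref{eq:energy-growth-rate}.

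For~\eqref{eq:ropelength-growth-rate} in the case $a=2$ the idea is to bound $\Eb(\gth)$ from below by $(4\pi)^{2}=(2\pi a)^{2}$. Since $\gth\in\cC(\tkc[2,b])$ is embedded (finite total energy implies positive thickness, hence no double points) and represents a non-trivial knot class of bridge index two, the classical Fáry--Milnor theorem together with Cauchy--Schwarz applied to the unit-speed parametrization gives
\[
(4\pi)^{2}\le\TC(\gth)^{2}\le\length(\gth)\cdot\Eb(\gth)=\Eb(\gth).
\]
Subtracting this lower bound from~\eqref{eq:energy-growth-rate} leaves $\th\,\Er(\gth)=\Eth(\gth)-\Eb(\gth)\le C\th^{2/3}$, and division by $\th$ produces~\eqref{eq:ropelength-growth-rate}.

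The only non-routine point I anticipate is the bookkeeping needed to ensure that the optimal $\rho_{*}\asymp\th^{1/3}$ actually respects both the interval $(0,a/(4\sqrt{a^{2}+b^{2}})]$ from Lemma~\ref{lem:tau-estimate} and the (unspecified) smallness bound from Proposition~\ref{prop:torus-rate}; this restricts the estimates to some range $\th\in(0,\th_{0}]$, and any remaining values of $\th$ can be absorbed into the constant $C$ by using a fixed comparison curve $\tilde\tau_{\rho_{0}}$. Conceptually, the exponents $2/3$ and $-1/3$ are pinned down by the elementary trade-off $\min_{\rho>0}\sq{\rho^{2}+\th/\rho}$, so the substantive inputs are really only the two comparison estimates and the Fáry--Milnor lower bound.
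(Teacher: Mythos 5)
Your proposal is correct and follows essentially the same route as the paper: the paper likewise bounds $\Eth(\gth)\le\Eth(\tilde\tau_{\th^{1/3}})$ using Lemma~\ref{lem:tau-estimate} and Proposition~\ref{prop:torus-rate} (your one-variable optimization just explains why $\rho\asymp\th^{1/3}$ is the right choice), and then derives~\eqref{eq:ropelength-growth-rate} from the classic F\'ary--Milnor bound $(4\pi)^2+\th\Er(\gth)\le\Eth(\gth)$ exactly as you do. Your remark about keeping $\rho_*$ in the admissible range for small $\th$ and absorbing the remaining $\th$ into the constant is a fair piece of bookkeeping that the paper leaves implicit.
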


\begin{proof}
 The first claim immediately follows by $\Eth(\gth)\le\Eth(\tilde{\tau}_{\th^{1/3}})$ from Lemma~\ref{lem:tau-estimate} and
 Proposition~\ref{prop:torus-rate}.
 The classic F\'ary--Milnor Theorem applied to $\g_\th$ gives
 $(4\pi)^{2} + \th\Er(\gth) \le \Eth(\gth)$.
 Now the first estimate for $a=2$ implies the second one.
\end{proof}
\section{Crookedness estimate and the elastic $(2,b)$-torus knot\\
Proofs of the main theorems}\label{sec:5}

In his seminal article~\cite{milnor} on the F\'ary--Milnor theorem,
Milnor
derived
the lower bound for the total curvature of knotted arcs
by studying the \emph{crookedness} of a curve and relating it to
the total curvature.
For some \emph{regular} curve $\g\in C^{0,1}\rzd$,
i.e., a Lipschitz-continuous mapping $\R/\Z\to\R^{d}$
which is not constant on any open subset of $\R/\Z$,
the crookedness of~$\g$ is the infimum over all $\nu\in\S^{2}$ of
\[ \mu(\g,\nu) := \#\sett{t_{0}\in\R/\Z}{t_{0} \textnormal{ is a local maximizer of }
t\mapsto\sp{\g(t),\nu}_{\R^{3}}}. \]
We briefly cite the main properties,
proofs can be found in~\cite[Sect.~3]{milnor}.

\begin{proposition}[Crookedness]\label{prop:crook}
 Let $\g\in C^{0,1}\rzd$ be a regular curve. Then
 \[ \TC(\g) = \tfrac12\int_{\S^{2}}\mu(\g,\nu)\d\area(\nu). \]
 Any partition of $\R/\Z$ gives rise to an inscribed regular polygon $p\in C^{0,1}\rzd$
 with $\mu(p,\nu)\le\mu(\g,\nu)$ for all directions $\nu\in\S^{2}$ that are
 not perpendicular to some edge of~$p$.
\end{proposition}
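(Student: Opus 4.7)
The plan is to establish both parts by first reducing the first identity to the polygonal case via an explicit spherical geometry computation, and then deriving the comparison estimate by a direct ordering argument on the subdivision values of the height function $h_\nu(t):=\sp{\g(t),\nu}$.

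For part one, I would start with a closed polygon $p$ with vertices $v_1,\ldots,v_n$ and unit edge directions $e_1,\ldots,e_n$, where $e_i=(v_{i+1}-v_i)/|v_{i+1}-v_i|$ and let $\alpha_i\in[0,\pi]$ denote the exterior angle at $v_i$, so that $\TC(p)=\sum_i\alpha_i$. For a direction $\nu\in\S^2$ that is not perpendicular to any edge the function $h_\nu$ is piecewise linear with non-vanishing slopes, and $v_i$ is a local maximum precisely when $\sp{e_{i-1},\nu}>0>\sp{e_i,\nu}$. Thus $\mu(p,\nu)=\#\{i:\sp{e_{i-1},\nu}>0>\sp{e_i,\nu}\}$ for a.e.\@ $\nu$. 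The spherical lune $L_i:=\{\nu\in\S^2:\sp{e_{i-1},\nu}>0>\sp{e_i,\nu}\}$ is bounded by two great semicircles meeting at a dihedral angle equal to $\alpha_i$, so $\area(L_i)=2\alpha_i$. Summing indicators and integrating gives
\[ \int_{\S^2}\mu(p,\nu)\d\area(\nu)=\sum_{i=1}^n\area(L_i)=2\sum_{i=1}^n\alpha_i=2\TC(p), \]
which is the polygonal version of the identity.

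To pass to a general regular Lipschitz curve $\g$, I would approximate $\g$ by a sequence of inscribed polygons $p_k$ whose subdivision meshes tend to zero; by Milnor's definition of total curvature as the supremum of $\TC(p_k)$ over such inscribed polygons (together with the standard fact that the polygonal total curvature converges to $\int_\g\kappa\d s$ for rectifiable curves of finite total curvature, and is $+\infty$ otherwise), $\TC(p_k)\to\TC(\g)$. On the other hand $\mu(p_k,\nu)$ is monotone non-decreasing in $k$ under refinement at a.e.\@ $\nu$, with pointwise limit $\mu(\g,\nu)$ (this is where the comparison proved next is invoked in its full generality). Monotone convergence then transfers the identity from $p_k$ to $\g$.

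For part two, I would fix an inscribed polygon $p$ coming from a partition $0\le t_0<t_1<\cdots<t_{n-1}<1$, i.e.\@ $v_i=\g(t_i)$, and consider any $\nu\in\S^2$ not perpendicular to any edge of $p$. Then the values $h_\nu(t_0),\ldots,h_\nu(t_{n-1})$ are pairwise distinct between consecutive indices, and the local maxima of $p$ with respect to $\nu$ are exactly those $v_i$ with $h_\nu(t_{i-1})<h_\nu(t_i)>h_\nu(t_{i+1})$. For any such $i$, the continuous function $h_\nu$ restricted to the arc $[t_{i-1},t_{i+1}]$ attains its maximum at some $s_i\in(t_{i-1},t_{i+1})$, because the value at the interior point $t_i$ strictly exceeds both endpoint values; such an $s_i$ is automatically a local maximum of $h_\nu$ on $\R/\Z$. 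If $i<j$ are two distinct indices corresponding to local maxima of $p$, then $j\ge i+2$, so the open intervals $(t_{i-1},t_{i+1})$ and $(t_{j-1},t_{j+1})$ are disjoint, and hence $s_i\ne s_j$. The correspondence $i\mapsto s_i$ is therefore injective into the set of local maxima of $h_\nu$, yielding $\mu(p,\nu)\le\mu(\g,\nu)$.

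The main conceptual step is the area-of-lunes identification in part one, while the comparison in part two is combinatorial once one observes that strict inequalities between successive partition heights (guaranteed by excluding edge-perpendicular directions) force the maximum on each comparison arc into its interior. The only delicate point in merging the two parts is justifying the passage to the limit in the integral identity; this is handled cleanly by the monotonicity of $\mu(p_k,\nu)$ under refinement together with monotone convergence, using part two to produce the required pointwise inequality at almost every direction.
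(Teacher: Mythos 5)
The paper itself offers no proof of this proposition---it is quoted from Milnor and the reader is referred to \cite[Sect.~3]{milnor}---so there is nothing internal to compare against; what you have written is essentially a reconstruction of Milnor's original argument. Both of your main computations are correct: for a polygon, the set of directions $\nu$ for which $v_i$ is a local maximum is the lune $\sett{\nu}{\sp{e_{i-1},\nu}>0>\sp{e_i,\nu}}$ of opening angle $\alpha_i$ and hence of area $2\alpha_i$, which sums to $2\TC(p)$; and for part two, the observation that each polygonal local maximum forces an \emph{interior} maximizer $s_i$ of $t\mapsto\sp{\g(t),\nu}$ on $(t_{i-1},t_{i+1})$, combined with the fact that polygonal local maxima are separated by at least one intermediate vertex so that these open arcs are pairwise disjoint, gives a genuine injection into the local maximizers of $h_\nu$ and hence $\mu(p,\nu)\le\mu(\g,\nu)$.

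The one place where your argument has a real gap is the limit passage in part one. Monotone convergence requires the pointwise a.e.\ convergence $\mu(p_k,\nu)\nearrow\mu(\g,\nu)$, and you justify this by saying it is ``where the comparison proved next is invoked in its full generality.'' But part two only delivers the upper bound $\mu(p_k,\nu)\le\mu(\g,\nu)$ (together with monotonicity under refinement, by applying it to a polygon inscribed in a finer polygon); it says nothing about the reverse inequality $\sup_k\mu(p_k,\nu)\ge\mu(\g,\nu)$, which is exactly what is needed to conclude $\tfrac12\int_{\S^2}\mu(\g,\nu)\d\area(\nu)\le\TC(\g)$ rather than only ``$\ge$''. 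This direction requires its own argument: given $m$ local maximizers of $h_\nu$ for a direction $\nu$ in the full-measure set where $h_\nu$ has no degenerate critical behaviour, one must insert partition points at (or near) these maximizers and at intermediate parameters where $h_\nu$ takes strictly smaller values, so that the resulting inscribed polygon already exhibits $m$ local maxima; this is the content of Milnor's Section~3 and cannot be extracted from the comparison inequality alone. With that step supplied, your proof is complete and coincides with the classical one.
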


The following statement is the heart of the argument  for Theorem~\ref{thm:main}.

\begin{lemma}[Crookedness estimate]\label{lem:crook}
 Let $\varphi\in(0,\pi]$,
 \[ \zeta := \tfrac12\min\br{\tfrac{1}{8\pi}\sin\nfrac\varphi4,\tfrac1{96\pi}}, \]
 and $\g\in C^{1,1}\rzd$ be
  a non-trivially knotted, arclength parametrized curve
 with  
 \begin{equation}\label{eq:g-phi}
  \norm{\g-\tge_{\varphi}}_{C^{1}}\le\delta
 \end{equation}
  where $\delta\equiv\delta_{\eps}>0$ and $\eps\equiv\eps_{\zeta}>0$
 are defined in~\eqref{eq:lgr} and~\eqref{eq:epsilon}.
 Then the set
 \[ \mathcal B(\g) := \sett{\nu\in\S^{2}}{\mu(\g,\nu)\ge3} \]
 is measurable with respect to the two-dimensional Hausdorff measure $\sH^2$ on~$\S^{2}$ and satisfies
 \[ \area{(\mathcal B(\g))} \ge
 \tfrac\pi{16}\cdot\frac{\varphi}{\Er(\g)}. \]
\end{lemma}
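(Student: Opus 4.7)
The plan is to exhibit a two-dimensional open subset of $\S^{2}$ on which $\mu(\gamma,\nu)\ge3$ and bound its $\sH^{2}$-measure from below; measurability of $\mathcal{B}(\gamma)$ follows at once from lower semi-continuity of $\nu\mapsto\mu(\gamma,\nu)$ on regular curves (cf.\@ \cite[Sect.~3]{milnor}). First I would invoke Proposition~\ref{prop:braids} to infer $\gamma\in\tkc$ for some odd $b$ with $|b|\ge3$, and apply Lemma~\ref{lem:zylinder} to decompose $\gamma\cap\mathcal{Z}$ into two strands $\tilde\gamma_{1},\tilde\gamma_{2}$ parametrized as $C^{1}$-graphs over the $\mathbf{e}_{2}$-axis via $\xi\in[-\eta,\eta]$. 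The angle argument in the proof of Proposition~\ref{prop:braids} furnishes $|\Delta_{\beta}|\ge|b|\pi-\tfrac\pi3\ge\tfrac{8\pi}3$, so the separation vector $a_{\xi}=\tilde\gamma_{1}(\xi)-\tilde\gamma_{2}(\xi)$ performs at least one complete revolution in the plane $\mathbf{e}_{2}^{\perp}$ as $\xi$ traverses $[-\eta,\eta]$. Outside $\mathcal{Z}$, Lemma~\ref{lem:isohandle} allows me to assume (via ambient isotopy, using Remark~\ref{rem:iso}) that $\gamma$ consists of two ``handles'' coinciding with the two circles of $\tge_{\varphi}$, which simplifies the counting of local maxima.

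Next I would parametrize a neighbourhood of $\mathbf{e}_{2}^{\perp}\cap\S^{2}$ by $\nu(\psi,\sigma):=\cos\sigma\,(\mathbf{e}_{1}\cos\psi+\mathbf{e}_{3}\sin\psi)+\sin\sigma\,\mathbf{e}_{2}$ and consider the rectangle $R:=I\times[-\sigma_{0},\sigma_{0}]$, where $I\subset\R/2\pi\Z$ is an interval of length $\varphi$ lying ``between'' the two circle planes of $\tge_{\varphi}$, and $\sigma_{0}>0$ will be chosen proportional to the thickness $\triangle[\gamma]=1/\Er(\gamma)$. For any such $\nu=\nu(\psi,\sigma)$ I would verify that $h(t):=\langle\gamma(t),\nu\rangle$ has at least three local maxima: two come from the handles (for $\psi\in I$ neither circle plane is near-perpendicular to $\nu$, so each handle carries a maximum close to the antipode of the tangential intersection point on the corresponding circle), and a third comes from inside $\mathcal{Z}$; since $a_{\xi}$ performs a full revolution, there exists $\xi_{*}\in(-\eta,\eta)$ at which $a_{\xi_{*}}/|a_{\xi_{*}}|$ is parallel to the $\mathbf{e}_{2}^{\perp}$-component of $\nu$, and at this point the ``upper'' of the two strands produces a genuine interior local maximum of $h$.

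Finally, the Jacobian of $\nu(\cdot,\cdot)$ equals $\cos\sigma\ge\tfrac12$, so $\sH^{2}\bigl(\nu(R)\bigr)\ge\tfrac12\cdot\varphi\cdot2\sigma_{0}$; taking $\sigma_{0}$ linear in $\triangle[\gamma]$ and tuning the constants yields $\area(\mathcal{B}(\gamma))\ge\tfrac{\pi\varphi}{16\Er(\gamma)}$. The hard part will be verifying that the candidate third local maximum at $\xi_{*}$ is really a local maximum of $h$ on the full curve, not absorbed by the smooth drift of the midpoint curve $\tfrac12(\tilde\gamma_{1}+\tilde\gamma_{2})$ or by the component $\sin\sigma$ of $\nu$ along the cylinder axis. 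The amplitude driving the third maximum is $\tfrac12|a_{\xi_{*}}|$, which is controlled from below by the thickness of $\gamma$; balancing this against the competing drifts is exactly what restricts $\sigma_{0}$ to be of order $\triangle[\gamma]$, thereby producing the factor $1/\Er(\gamma)$ in the bound.
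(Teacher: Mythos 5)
Your high-level picture is the right one --- the braiding of the two strands inside the cylinder forces a third local maximum of the height function for a set of directions whose measure is of order $\varphi\cdot\triangle[\g]$ --- but the proposal has two genuine gaps, and the second one is exactly where the paper's proof does its real work. First, you cannot invoke Lemma~\ref{lem:isohandle} to ``assume'' that the handles coincide with the half-circles of $\tge_{\varphi}$: crookedness $\mu(\g,\nu)$ and ropelength are not invariants of ambient isotopy, so the estimate must be proved for $\g$ itself. The $C^{1}$-closeness \eqref{eq:g-phi} is what locates the two handle maxima (and the paper never deforms $\g$ in the proof of the crookedness lemma; the isotopy is only used in Proposition~\ref{prop:braids} to identify the knot class).

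Second, and more seriously, the existence of the third local maximum is not established by exhibiting a single parameter $\xi_{*}$ with $a_{\xi_{*}}$ parallel to the equatorial component of $\nu$. Along a strand one has $\sp{\tg_{1}(\xi),\nu}=\sp{c(\xi),\nu}+\tfrac12\sp{a_{\xi},\nu}$ with $c=\tfrac12(\tg_{1}+\tg_{2})$; the oscillation you want to exploit has amplitude $\tfrac12\abs{a_{\xi}}\ge\triangle[\g]$, but the drift of $c$ inside $\mathcal Z$ is of order $\zeta$, and by \eqref{eq:zeta-thick} one only knows $\zeta\ge\triangle[\g]$ --- the competing terms are of the \emph{same} order, so the amplitude-versus-drift comparison you sketch does not close, and moreover the rotation speed of $\beta$ is not bounded below, so $\xi_{*}$ need not even be a critical point of the height function. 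The paper circumvents this by working with \emph{inscribed polygons} (via Proposition~\ref{prop:crook}, whose crookedness bounds that of $\g$ from below for a.e.\@ direction): it uses the full strength of $\abs{\Delta_{\beta}}\ge3\pi-\tfrac\pi3$ (not merely one revolution) to produce, for a.e.\@ $\psi$, \emph{three} crossing parameters $\xi_{A}<\xi_{B}<\xi_{C}$ with two separation parameters $\xi_{E},\xi_{F}$ interlaced, proves $\abs{a_{\xi_{E}}},\abs{a_{\xi_{F}}}\ge2\triangle[\g]$ via the doubly critical self-distance, and then runs a three-case analysis on which side of the chord through the crossing points the separated vertices lie, obtaining in each case an inscribed polygon with three local maxima for all directions in an angular interval of width $\ge\tfrac\pi4\triangle[\g]$. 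What you label ``the hard part'' is precisely this construction; without it (or a substitute of comparable strength) the proof is incomplete.
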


\begin{proof}
 As to the measurability of $\mathcal B(\g)$,
 we may consider the two-dimensional Hausdorff measure $\area$
 on $\S^{2}\setminus\mathscr N$
 where $\mathscr N$ denotes the set of measure zero where $\mu(\g,\cdot)$
 is infinite.
 From the fact 
 that $\mu(\g,\cdot)$ is $\area$-a.e.\@ lower semi-continuous on $\S^{2}
 \setminus\mathscr N$
 if $\g\in C^{1}$,
 we infer that $\sett{\nu\in\S^{2}\setminus\mathscr N}{\mu(\g,\nu)\le2}$ is closed,
 thus measurable.
 Therefore, its complement (which coincides with $\mathcal B(\g)$
 up to a set of measure zero) is also measurable.

 As $\g$ is embedded and $C^{1,1}$, its thickness
 $\triangle[\g]=\nfrac1{\Er(\g)}$ is positive.

 We consider the diffeomorphism
 $\Phi:(0,\pi)\times\R/2\pi\Z\to\S^{2}\setminus\set{\pm\mathbf e_{2}}$,
 \[ (\th,\psi)\mapsto
 \begin{pmatrix}-\sin\th\sin\psi\\\cos\th\\\sin\th\cos\psi \end{pmatrix}. \]

 We aim at showing that, for a.e.\@ $\psi\in[\nfrac\varphi4,\nfrac{3\varphi}4]$,
 there is some sub-interval $J_{\psi}\subset(\tfrac\pi6,\tfrac{5\pi}6)$
 with $\abs{J_{\psi}}\ge\tfrac\pi4\triangle[\g]$ and
 $\Phi(\th,\psi)\in\mathcal B(\g)$ for any $\th\in J_{\psi}$.
 In this case we have
 \[ \area(\mathcal B(\g))
 = \iint_{\S^{2}\setminus\set{\pm\mathbf e_{2}}}\chi_{\mathcal B(\g)}\d\area
 =\int_{0}^{\pi}\int_{\R/2\pi\Z}\chi_{\Phi^{-1}(\mathcal B(\g))}\sin\th\d\psi\d\th
 \ge\tfrac\pi{16}\varphi\triangle[\g]. \]
 
 For given $\varphi\in (0,\pi]$ let
 \[ \tilde{\zeta} := 2\zeta = \min\br{\tfrac{1}{8\pi}\sin\nfrac\varphi4,\tfrac1{96\pi}} \]
 and denote the corresponding cylinder by $\widetilde{\mathcal Z}$,
 cf.\@ Section~\ref{sec:class}.
 For an arbitrary $\psi\in[\nfrac\varphi4,\nfrac{3\varphi}4]$.
 The orthogonal projection onto $\nu_{\psi}^{\perp}$ for
 \[ \nu_{\psi} := \cos\psi\mathbf e_{1}+\sin\psi\mathbf e_{3}
 = \Phi(\tfrac\pi2,\psi-\tfrac\pi2) \]
 will be denoted by $\P$.
 Note that $\nu_{\psi}\perp\Phi(\th,\psi)$ for any $\th\in(0,\pi)$.

 Firstly we note that the half circles $h$
 of $\tge_{\varphi}$ apart from the cylinder $\widetilde{\mathcal Z}$,
 namely $\tge_{\varphi}$
 restricted to $[\tfrac18,\tfrac38]$ and $[\tfrac58,\tfrac78]$ respectively,
 do not interfere with $\widetilde{\mathcal Z}$ when projected onto $\nu_{\psi}^{\perp}$,
  more precisely, $\P h\cap\P\widetilde{\mathcal Z}=\emptyset$.
 To see this, consider the projection $\Pi$ onto $\mathbf e_{2}^{\perp}$,
 see Figure~\ref{fig:e2}.
 The two segments connecting $\tge_{\varphi}(\tfrac18)$
 to $\tge_{\varphi}(\tfrac38)$, and $\tge_{\varphi}(\tfrac58)$
 to $\tge_{\varphi}(\tfrac78)$, respectively,  are mapped onto
 points  $P,P'\in\mathbf e_2^\perp$ under the projection $\Pi$,
 and $P$ and $P'$ separate $\Pi(h)$ and $\Pi(\widetilde{\mathcal Z})$.
 From Figure~\ref{fig:e2} we read off that
 no projection line of $\P$ meets both
 $\mathcal Z$ and $h$,
since otherwise, such a projection line (parallel to $\nu_\psi$
and orthogonal to $\mathbf e_{2}$ by definition)
projected onto $\mathbf e_2^\perp$
under $\Pi$, would intersect both, $\Pi(\widetilde{\mathcal Z})$ and $\Pi(h)$,
which is impossible.

 \begin{figure}\centering
 \includegraphics[scale=.5]{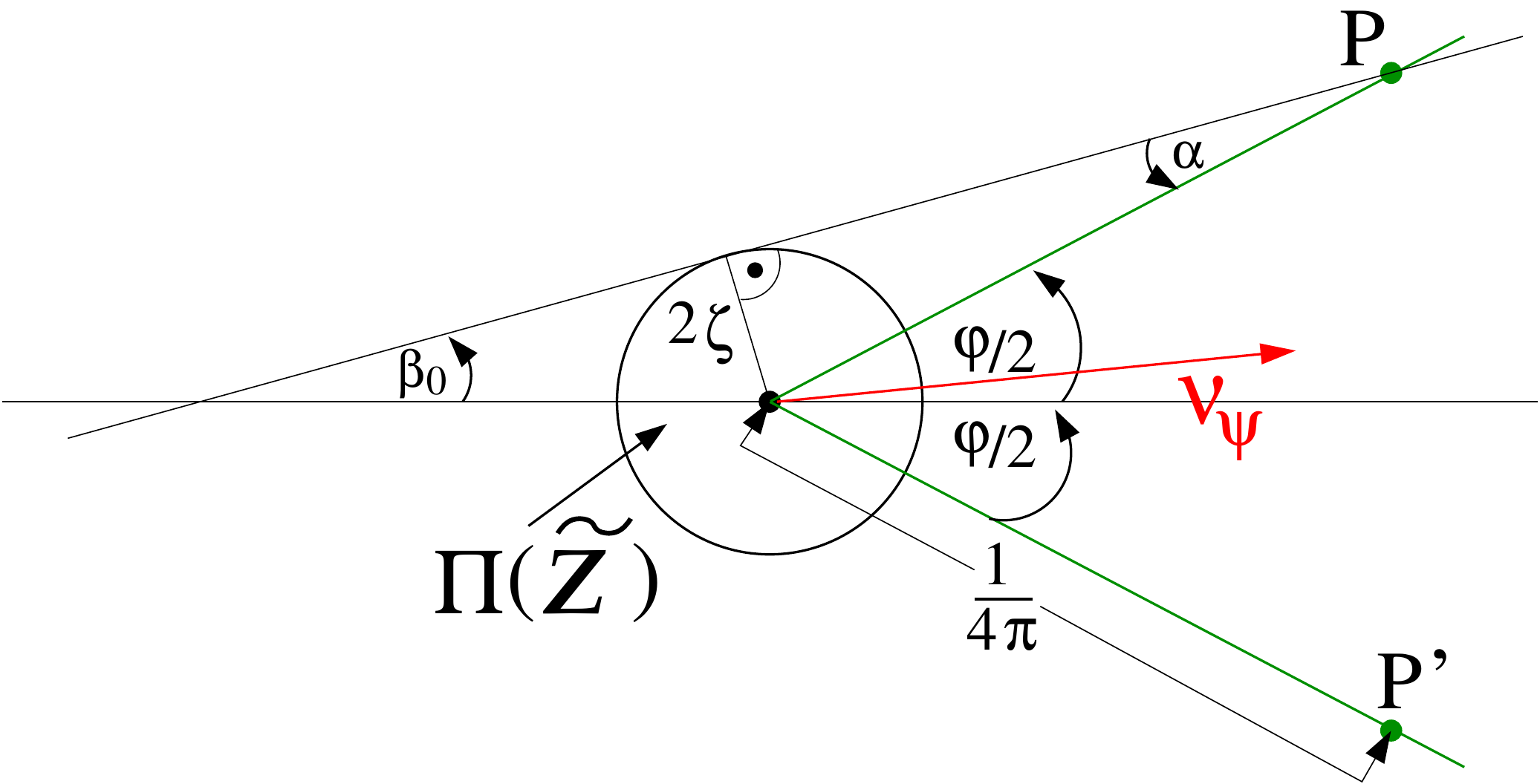}
 \caption{Under the projection $\Pi$ onto $\mathbf e_{2}^{\perp}$,
 the cylinder $\widetilde{\mathcal Z}$ is mapped to a disk.
 Each line enclosing an angle smaller than $\beta_{0}$
 with $\nu$ cannot meet both this circle and (the line) $\Pi(h)$.
 Note that $\beta_{0}=\nfrac\varphi2-\alpha$
 and $\sin\a=8\pi\zeta\le\tfrac12\sin\nfrac\varphi4<\sin\nfrac\varphi4$,
 so $\a<\nfrac\varphi4$ and $\beta_{0}>\nfrac\varphi4$.}\label{fig:e2}
 \end{figure}
 
 \begin{figure}\centering
 \includegraphics[scale=.5]{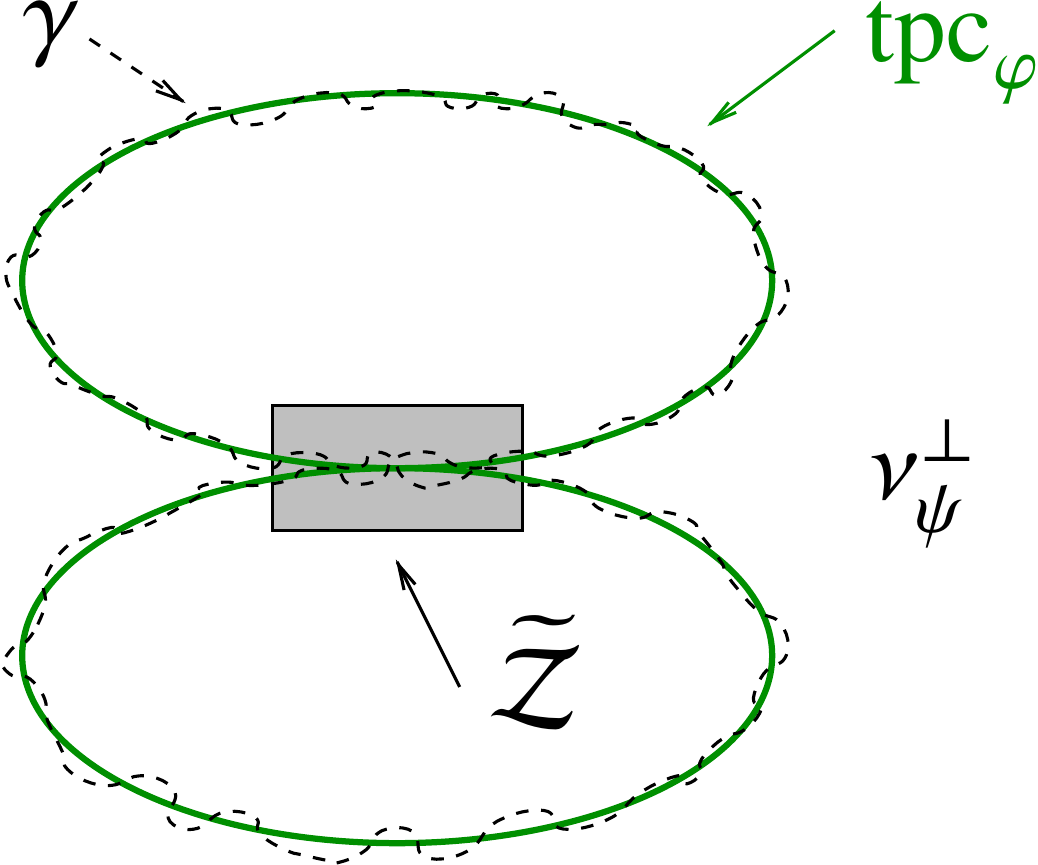}
 \caption{Projection onto $\nu_{\psi}^{\perp}$.}\label{fig:ellipses}
 \end{figure}

 Now we consider the projection $\P$, see Figure~\ref{fig:ellipses}.
 By construction there is (in the projection) one ellipse-shaped component
 on both sides of the line $\R\mathbf e_{2}$.
 As shown in~\eqref{eq:axis-angle},
 the angle between $\tge_{\varphi}'$ and $\mathbf e_{2}$
 is bounded by~$\tfrac\pi{10}$ on $|\xi|\le \tfrac1{16\sqrt{3}\pi}$.

 Proceeding as in~\eqref{eq:arccos}
 and using~\eqref{eq:g-phi}, we arrive at
 \begin{align*}
 \angle\br{\tg_{j}',\widetilde{\tge}_{\varphi,j}'}
 &\le \arccos\frac{\sp{\tg_{j}',\widetilde{\tge}_{\varphi,j}'}}{\abs{\tg_{j}'}\abs{\widetilde{\tge}_{\varphi,j}'}}
 \le \arccos\frac{\abs{\widetilde{\tge}_{\varphi,j}'}-\eps}{\abs{\widetilde{\tge}_{\varphi,j}'}+\eps}
 \;\refeq{arccos-x}\le\sqrt{\frac{2\pi\eps}{\abs{\widetilde{\tge}_{\varphi,j}'}+\eps}} \\
 &\le\sqrt{2\pi\eps}\;\refeq{epsilon}
 \le\sqrt{\tfrac1{10}\sqrt{\tfrac1{8\cdot96}}}
 < 0.1
 < \tfrac\pi{10}, \qquad j=1,2.
 \end{align*}
 Therefore the secant defined by two points of $\g$ 
 inside $\widetilde{\mathcal Z}$,
 either  both on $\tg_{1}$, or  both on $\tg_{2}$,
 always encloses with $\mathbf e_{2}$ an angle of at most $\tfrac\pi5$,
  and the same holds true for the projection onto $\nu_{\psi}^{\perp}$.
 
 Now we pass to the cylinder $\mathcal Z$
 corresponding to $\zeta = \nfrac{\tilde{\zeta}}2$.
 Note that the distance of $\partial\mathcal Z$ to $\partial\widetilde{\mathcal Z}$
 is bounded below by $\min\br{\zeta,\br{\sqrt2-1}\sqrt{\nfrac\zeta{8\pi}}}
 \ge\zeta$
 and that
 \begin{equation}\label{eq:zeta-thick}
  \zeta\ge\triangle[\g]
 \end{equation}
 for otherwise the strands of $\g$ would not fit into $\mathcal Z$
  which is guaranteed by Lemma~\ref{lem:zylinder}.
 
 Applying Proposition~\ref{prop:braids} (using Sard's theorem) and assuming $b\ge3$ (the case $b\le-3$
 being symmetric;  recall that $b=\pm1$ leads to the unknot)
 there are for a.e.\@ $\psi\in[\nfrac\varphi4,\nfrac{3\varphi}4]$ points
 \[ -\eta<\xi_{A}<\xi_{E}<\xi_{B}<\xi_{F}<\xi_{C}<\eta \]
 such that
 $a_{\xi_X}$ is parallel to $\nu_{\psi}$ (thus $\P a_{\xi_X}=0$)
 for $X\in\set{A,B,C}$ while $a_{\xi_X}$ is perpendicular
 to $\nu_{\psi}$ for $X\in\set{E,F}$ and
 \[ \beta(\xi_{A}) + \pi = \beta(\xi_{B}) = \beta(\xi_{C}) - \pi. \]
 We claim
 \begin{equation}\label{eq:thickness-estimate}
  \abs{a_{\xi_{E}}},\abs{a_{\xi_{F}}}\ge2\triangle[\g].
 \end{equation}
 To see this, consider the map $[-\eta,\eta]^{2}\to(0,1)$,
 $(\xi_{1},\xi_{2})\mapsto\abs{\tg_{1}(\xi_{1})-\tg_{2}(\xi_{2})}$.
 There is at least one global minimizer $(\tilde\xi_{1},\tilde\xi_{2})$,
 and consequently we have $\tg_{1}'(\tilde\xi_{1})
 \perp \tg_{1}(\tilde\xi_{1})-\tg_{2}(\tilde\xi_{2}) \perp \tg_{2}'(\tilde\xi_{2})$.
 For any $\tilde\eps>0$ we may choose some $\tilde\xi_{\tilde\eps}\in[-\eta,\eta]$
 close to $\tilde\xi_{1}$ such that the radius $\tilde\rho_{\tilde\eps}$ of the circle
 passing through $\tilde\g_{1}(\tilde\xi_{1})$,
 $\tilde\g_{2}(\tilde\xi_{2})$, $\tilde\g_{1}(\tilde\xi_{\tilde\eps})$
 satisfies
 $\abs{\tg_{1}(\tilde\xi_{1})-\tg_{2}(\tilde\xi_{2})}
 \ge2\tilde\rho_{\tilde\eps}-\tilde\eps\ge2\triangle[\g]-\tilde\eps$
 (cf.\@ Litherland et al.~\cite[Proof of Thm.~3]{lsdr}).
 
 We let
 \begin{align*}
 A&:=\P\tg_{1}(\xi_{A})=\P\tg_{2}(\xi_{A}), &
 E&:=\P\tg_{1}(\xi_{E}), &
 E'&:=\P\tg_{2}(\xi_{E}), \\
 B&:=\P\tg_{1}(\xi_{B})=\P\tg_{2}(\xi_{B}), &
 F&:=\P\tg_{1}(\xi_{F}), &
 F'&:=\P\tg_{2}(\xi_{F}), \\
 C&:=\P\tg_{1}(\xi_{C})=\P\tg_{2}(\xi_{C}),
 \end{align*}
 and denote the secant through $A$ and $C$ by $g$
 which defines two half planes, $G$ and $G'$.
 As shown before, this line encloses an angle of
 at most $\nfrac\pi5$ with the $\mathbf e_{2}$-axis.
 Therefore, the line orthogonal to and bisecting $\overline{AC}$,
 is spanned by $\Phi(\th_{0},\psi)\in\nu_{\psi}^{\perp}\cap\S^{2}$ for
 some $\th_{0}\in[\nfrac{3\pi}{10},\nfrac{7\pi}{10}]$ and meets $\P\g$
 outside $\P\mathcal Z$ in two points, $D\in G$, $D'\in G'$.

 We arrange the labels of these half planes so that $AEFCDAE'F'CD'A$ is a closed
 polygon inscribed in $\P\g$.
 We aim at showing that, for some inscribed (sub-) polygon~$p$,
 there is some sub-interval $J_{\psi}\subset(\tfrac\pi6,\tfrac{5\pi}6)$
 with $\abs{J_{\psi}}\ge\frac{\triangle[\g]}{10}$ and
 \[ 3\le\mu(p,\Phi(\th,\psi))<\infty \quad\text{for all }
 \th\in J_{\psi}. \]
 The definition of $\mu(\cdot,\cdot)$ implies
 that the corresponding polygon inscribed in the (non-projected)
 curve $\g_{*}$ also satisfies the latter estimate.
 We construct~$p$ as follows.
 We distinguish three cases which are depicted in Figure~\ref{fig:cases}.

 \begin{figure}\centering
 \includegraphics[scale=.4]{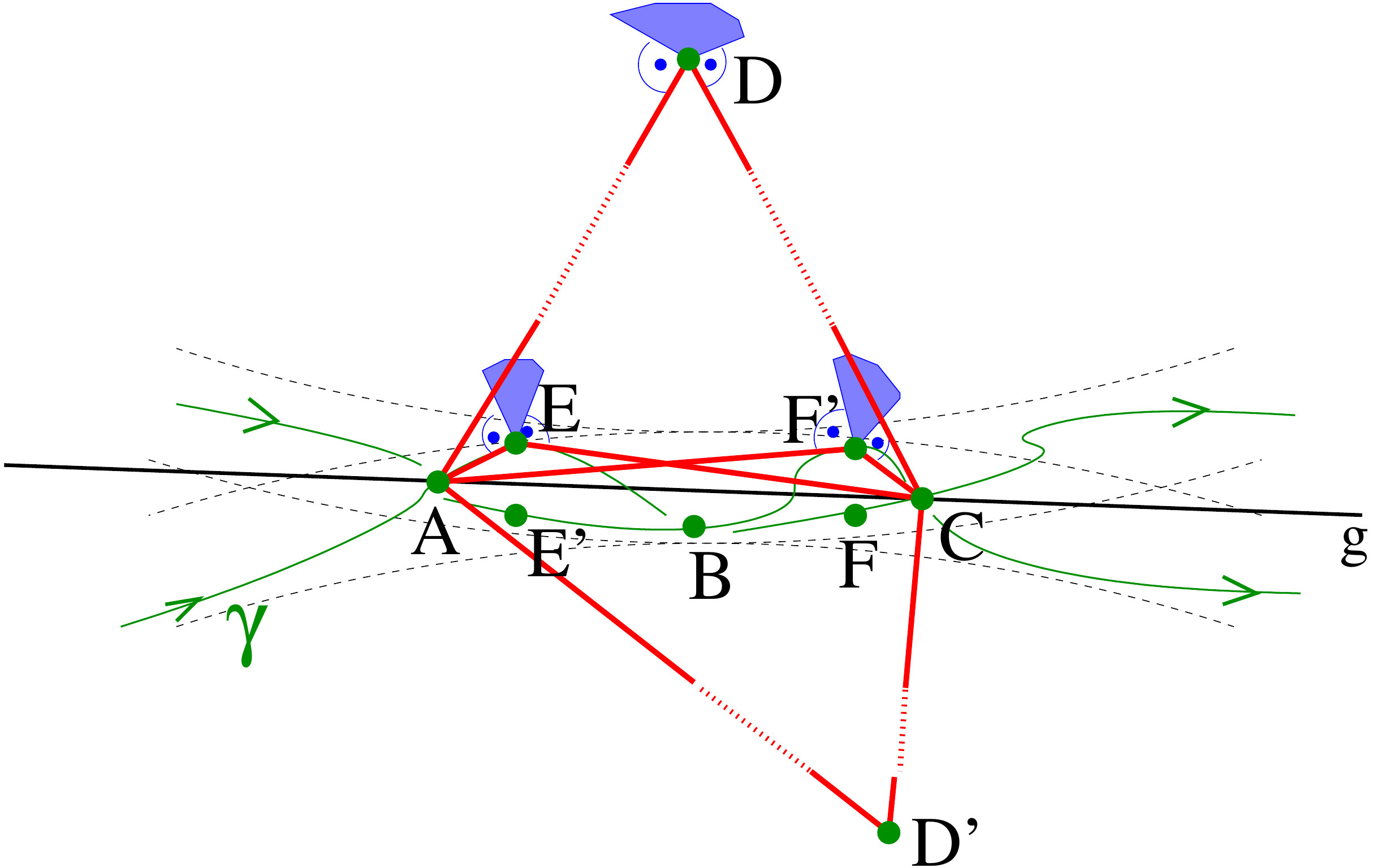}
 \includegraphics[scale=.4]{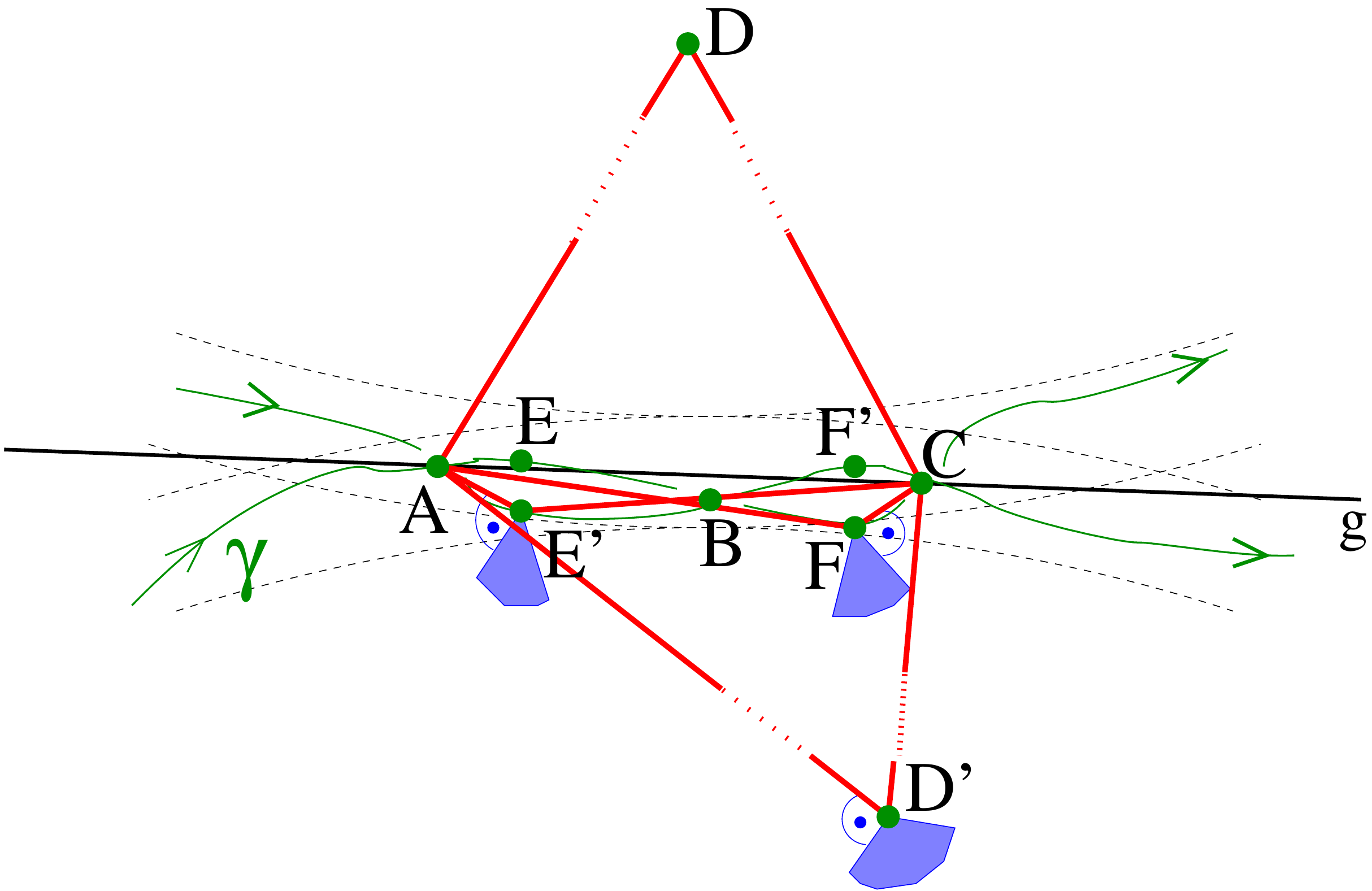}
 \includegraphics[scale=.4]{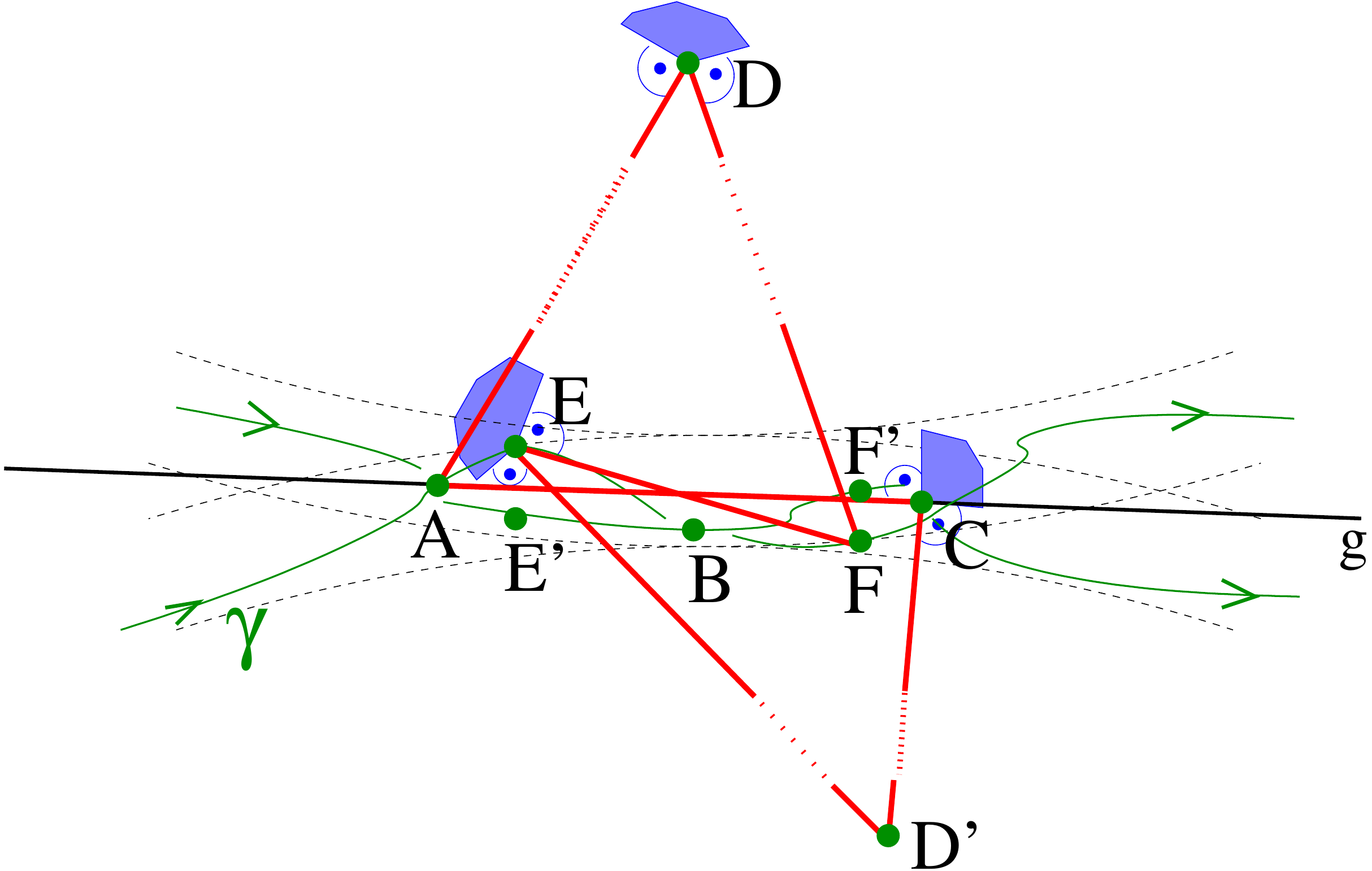}
 \caption{Three cases in the drawing plane $\nu_\psi^\perp$
 discussed in the proof of Lemma~\ref{lem:crook}. Intersecting the blue
 shaded regions leads to the angular region $J_\psi$ in each case.}\label{fig:cases}
 \end{figure}
 If
 \begin{equation}\label{eq:crookcase}
  \dist(E,g),\dist(F',g)\ge\tfrac{\triangle[\g]}2\qquad\text{and}\qquad E,F'\in G
 \end{equation}
 then, using~\eqref{eq:zeta-thick}, the polygon
 $AECDAF'CD'A$ has three local maxima at $E$, $D$, and $F'$
 when projected onto any $\Phi(\th,\psi)\in\S^{2}$ with
 $\abs{\th-\th_{0}}\le\arctan {\triangle[\g]}$.
As $\arctan x\ge\tfrac\pi4x$ for $x\in[0,1]$,
 the set $J_{\psi}:=(\th_{0}-\tfrac\pi4\triangle[\g],\th_{0}+\tfrac\pi4\triangle[\g])$ is contained in $(\tfrac\pi6,\tfrac{5\pi}6)$
 due to $\tfrac\pi4\triangle[\g]\refeq{zeta-thick}\le\tfrac\pi4\zeta
 \le\tfrac1{8\cdot96}<0.0014$
 and satisfies $\abs{J_{\psi}}=\tfrac\pi2\triangle[\g]$.
 
 Now assume that \emph{both} $E$ and $F'$ do not meet the (first) conditions in~\eqref{eq:crookcase}, so,  by~\eqref{eq:thickness-estimate},
 \[ \dist(F,g),\dist(E',g)\ge\tfrac{\triangle[\g]}2\qquad\text{and}\qquad E',F\in G'. \]
 But this is symmetric to~\eqref{eq:crookcase}
 since the polygon $AFCDAE'CD'A$
 has three local \emph{minima} at $E'$, $D'$, and $F$
 when projected onto any $\Phi(\th,\psi)\in\S^{2}$ with
 $\abs{\th-\th_{0}}\le\arctan\triangle[\g]$.
 Employing the same argument as for~\eqref{eq:crookcase},
 this yields the desired as the number of local maxima and minima agrees.
 
 Finally we have to deal with the ``mixed case''.
 Without loss of generality we may assume
 \[ \dist(E,g),\dist(F,g)\ge\tfrac{\triangle[\g]}2\qquad\text{and}\qquad E\in G,F\in G'. \]
 But now
 the polygon
 $EFDACD'E$ has three local maxima at $E$, $D$, and $C$
 when projected onto any $\Phi(\th,\psi)\in\S^{2}$ with
 $\abs{\th-\th_{0}}\le\arctan\triangle[\g]$
 and $\sp{\Phi(\th,\psi),C-A}>0$
 which completes the proof with $\abs{J_{\psi}}=\tfrac\pi4\triangle[\g]$.
\end{proof}

\begin{proof}[Theorem~\ref{thm:main}]
Let us now assume that $\tge_{\varphi}$, $\varphi\in (0,\pi]$,
is the $C^1$-limit  of $\Eth$-minimizers ~$\br{\gth}_{\th>0}$ as 
$\th\to 0$.
In the light of Proposition~\ref{prop:estimate-torus} for $a=2$,
\ref{prop:crook} and Lemma~\ref{lem:crook} we arrive at
\begin{align*}
 C\th^{2/3}\;&\refeq{energy-growth-rate}{\ge}\; \Eb(\gth)-(4\pi)^{2} = \int_{\R/\Z}\abs{\ddgth}^{2}-(4\pi)^{2}
 \ge \br{\int_{\R/\Z}\abs{\ddgth}}^{2}-(4\pi)^{2} \\
 &=\TC(\gth)^{2}-(4\pi)^{2}\ge 4\pi\br{\TC(\gth)-4\pi} = 2
 \pi\int_{\S^{2}}\br{\mu\br{\gth,\nu}-2}\d A(\nu) \\
 &\ge 2\pi\area\br{\mathcal B(\gth)}\ge \frac {\varphi\pi^2}{8\Er(\gth)} \overset{\eqref{eq:ropelength-growth-rate}}{\ge} c\th^{1/3}
\end{align*}
for some positive constant $c$ depending on $\varphi$
and $\th>0$ sufficiently small,
which is a contradiction as~$\th\searrow0$. Thus we have proven that
the limit curve $\g_0\in\scl[\tkc]$ is isometric to the twice covered circle~$\tge_0$.
\end{proof}

Our Main Theorem now reads as follows.

\begin{theorem}[Two bridge torus knot classes]\label{thm:elastic-shapes}
For any knot class $\K$ the following statements are equivalent.
\begin{enumerate}
\item\label{item:class} $\K$ is the $(2,b)$-torus knot class for some odd integer $b\in\Z$ where $\abs b\ge3$;
\item\label{item:inf}
  $\inf_{\cC(\K)}\Eb=(4\pi)^2$;
\item\label{item:C1} $\tge_{\varphi}$ belongs to the $C^{1}$-closure of
$\cC(\K)$ for some $\varphi\in[0,\pi]$ and $\K$ is not trivial;
\item\label{item:some-elastic} for some $\varphi\in[0,\pi]$
the pair of tangentially intersecting circles $\tge_{\varphi}$
is an elastic knot for $\cC(\K)$;
\item\label{item:0-elastic} the unique elastic knot for $\cC(\K)$ is the twice covered circle $\tge_{0}$.
\end{enumerate}
\end{theorem}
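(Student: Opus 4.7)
The plan is to establish the cycle of implications \eqref{item:class} $\Rightarrow$ \eqref{item:inf} $\Rightarrow$ \eqref{item:0-elastic} $\Rightarrow$ \eqref{item:some-elastic} $\Rightarrow$ \eqref{item:C1} $\Rightarrow$ \eqref{item:class}, assembling results from Sections~\ref{sec:shape}, \ref{sec:class}, and~\ref{sec:5} rather than proving anything new. Each step is essentially a repackaging of a lemma proved earlier, so the main work is careful bookkeeping of what has been shown where.

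For \eqref{item:class} $\Rightarrow$ \eqref{item:inf}: the arclength-reparametrized comparison torus knots $\tilde\tau_\rho\in\cC(\K)$ from Lemma~\ref{lem:tau-estimate} satisfy $\Eb(\tilde\tau_\rho)\to(4\pi)^2$ as $\rho\to0$ (with $a=2$), giving $\inf_{\cC(\K)}\Eb\le(4\pi)^2$; the matching lower bound $(4\pi)^2$ is the F\'ary--Milnor estimate applied to non-trivial knots (since $\K=\cT(2,b)$ is non-trivial for $|b|\ge3$). For \eqref{item:inf} $\Rightarrow$ \eqref{item:0-elastic}: by Proposition~\ref{prop:constcurv} every elastic knot $\g_0$ for $\K$ has constant curvature $4\pi$; since $\inf_{\cC(\K)}\Eb=(4\pi)^2>(2\pi)^2$ the class $\K$ is non-trivial, so Proposition~\ref{prop:nonembedded} forces $\g_0$ to possess a double point, and Corollary~\ref{cor:tg8} then identifies $\g_0=\tge_\varphi$ for some $\varphi\in[0,\pi]$. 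Since $\g_0$ lies in the $C^1$-closure of $\cC(\K)$, Proposition~\ref{prop:braids} (if $\varphi>0$) or Corollary~\ref{cor:braids} (if $\varphi=0$) forces $\K$ to be either trivial or a $(2,b)$-torus knot class with odd $|b|\ge3$; non-triviality of $\K$ rules out the first alternative, so $\K=\cT(2,b)$. Finally Theorem~\ref{thm:main} pins the admissible opening angle down to $\varphi=0$, giving uniqueness of the elastic knot as $\tge_0$.

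The remaining implications are essentially definitional. For \eqref{item:0-elastic} $\Rightarrow$ \eqref{item:some-elastic} take $\varphi=0$. For \eqref{item:some-elastic} $\Rightarrow$ \eqref{item:C1}: every elastic knot lies by construction in $\overline{\cC(\K)}^{C^1}$, and $\K$ cannot be trivial because the unique elastic unknot is the once-covered round circle (Proposition~\ref{prop:nonembedded}), whereas every $\tge_\varphi$, $\varphi\in[0,\pi]$, has a self-intersection at the origin. For \eqref{item:C1} $\Rightarrow$ \eqref{item:class}: apply Proposition~\ref{prop:braids} or Corollary~\ref{cor:braids} to a $C^1$-approximating sequence in $\cC(\K)$; stability of the knot type under $C^1$-perturbations (see \cite{dej,blatt:isot,reiter:isot}) shows that $\K$ is unknotted or a $(2,b)$-torus knot class, and non-triviality excludes the unknot.

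There is no serious obstacle here: the heavy analytic work (the generalised F\'ary--Milnor theorem, the Schur-type rigidity in Proposition~\ref{prop:minarc}, the braid-word identification in Proposition~\ref{prop:braids}, and the crookedness argument in Theorem~\ref{thm:main}) has already been carried out. The only point requiring mild care is to consistently invoke Corollary~\ref{cor:braids} rather than Proposition~\ref{prop:braids} whenever $\varphi=0$, and to exclude the trivial knot class at each stage where it might accidentally satisfy a hypothesis (this is where Proposition~\ref{prop:nonembedded} and the elementary strict inequality $(2\pi)^2<(4\pi)^2$ are used).
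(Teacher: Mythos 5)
Your proposal is correct and assembles exactly the same ingredients as the paper's proof (Lemma~\ref{lem:tau-estimate} with Theorem~\ref{thm:fm}, Propositions~\ref{prop:constcurv}, \ref{prop:nonembedded}, \ref{prop:braids}, Corollaries~\ref{cor:tg8}, \ref{cor:braids}, and Theorem~\ref{thm:main}), each invoked for the same purpose. The only difference is organizational: the paper first closes the sub-cycle \eqref{item:class}$\Rightarrow$\eqref{item:inf}$\Rightarrow$\eqref{item:C1}$\Rightarrow$\eqref{item:class} and then handles \eqref{item:0-elastic} and \eqref{item:some-elastic}, whereas you run a single five-step cycle; this is a trivial reordering, not a different argument.
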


\begin{proof}[Theorem~\ref{thm:elastic-shapes}]
\ref{item:class} $\Rightarrow$ \ref{item:inf} follows from Theorem~\ref{thm:fm}
and the estimate in Lemma~\ref{lem:tau-estimate}
for the comparison torus curves defined in~\eqref{eq:torus-knot}.

\ref{item:inf} $\Rightarrow$ \ref{item:C1}: As the bending energy of
the circle amounts to $(2\pi)^{2}$, the knot class cannot be trivial.
From Proposition~\ref{prop:constcurv} we infer that any elastic knot
(which exists and belongs to the $C^1$-closure of $\cC(\K)$ according 
to Theorem~\ref{thm:existence-limit}) has constant
curvature $4\pi$ a.e.
Proposition \ref{prop:nonembedded} guarantees that any such elastic
knot must have double points, which
permits to apply
Corollary~\ref{cor:tg8}.
Thus, such an  elastic knot coincides (up to isometry)
with a tangential pair of circles $\tge_\varphi$ for some $\varphi\in [0,\pi]$, 
which
therefore lies in the $C^1$-closure of $\cC(\K)$ as desired.

\ref{item:C1} $\Rightarrow$ \ref{item:class} is an immediate consequence of
Proposition~\ref{prop:braids} and Corollary~\ref{cor:braids}. 

Hence, so far we have shown the equivalence of the first three items.

\begin{figure}\centering
 \includegraphics[scale=.5]{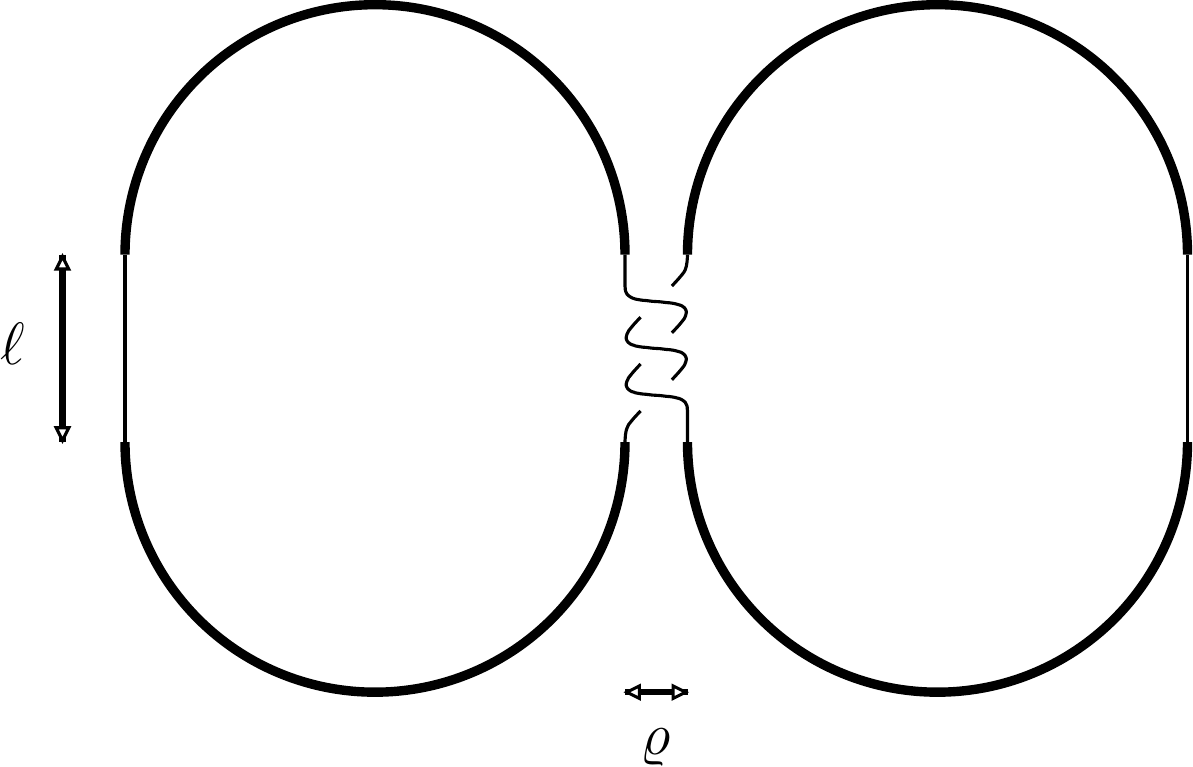}
 \caption{Explicit construction proving that $\tge_{\varphi}$ is in the
 strong $C^k$-closure of $\cC(\K)$
 for any $\varphi\in(0,\pi]$ and $k\in\N$.}\label{fig:hinge}
\end{figure}

\ref{item:class} $\Rightarrow$ \ref{item:0-elastic}
is just the assertion of Theorem~\ref{thm:main}.

\ref{item:0-elastic} $\Rightarrow$ \ref{item:some-elastic} is immediate.

\ref{item:some-elastic} $\Rightarrow$ \ref{item:class}:
Since elastic knots for $\cC(\K)$ lie in the $C^1$-closure of $\cC(\K)$
by Theorem \ref{thm:existence-limit}
we find by Proposition~\ref{prop:braids} for $\varphi\in (0,\pi]$,  and by
Corollary~\ref{cor:braids} for $\varphi =0$ that $\K=\cT(2,b)$
for some odd $b\in\Z$, $\abs b\ge3$, or that $\K$
is trivial. The latter can be excluded since the unique elastic unknot is
the once-covered circle by
Proposition~\ref{prop:nonembedded}.
\end{proof}
\begin{remark}[Strong $C^k$-closure]
Using an explicit construction like the one
indicated in Figure \ref{fig:hinge} one
can show as well that each item above is also equivalent to the following:

\emph{For every  $\varphi\in[0,\pi]$
the corresponding tangential pair of circles $\tge_{\varphi}$ belongs to the $C^{k}$-closure of $\cC(\K)$ for any  $k\in\N$, and $\K$ is not trivial.}
\end{remark}

\begin{appendix}

\section{The F\'ary--Milnor theorem for the $C^1$-closure of the knot class}

\begin{theorem}[Extending the F\'ary--Milnor theorem to the $C^1$-closure of $\mathcal C(\K)$]\label{thm:fm}
 Let $\K$ be a non-trivial knot class and let
 $\g$ belong to the closure of $\cC(\K)$ with respect to the $C^1$-norm.
 Then  the total curvature~\eqref{eq:tc} satisfies
 \begin{equation}\label{eq:fm}
  T(\g)\ge4\pi.
 \end{equation}
\end{theorem}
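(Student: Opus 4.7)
The plan is to transfer the classical Fáry--Milnor lower bound from embedded representatives of $\K$ to a (possibly non-embedded) $C^1$-limit by combining Denne's theorem on essential alternating quadrisecants with a polygonal Fáry--Milnor argument. First choose a sequence $(\gamma_k)_k\subset\cC(\K)$ with $\gamma_k\to\gamma$ in $C^{1}(\R/\Z,\R^{3})$. Each $\gamma_k$ is a smooth embedded representative of the nontrivial knot class $\K$, so Denne's theorem supplies an essential alternating quadrisecant: four parameters $t^k_1<t^k_2<t^k_3<t^k_4$ and a line $\ell_k\subset\R^3$ through the four points $\gamma_k(t^k_i)$, arranged on $\ell_k$ in the alternating order dual to the cyclic order on the curve.

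Next, using compactness of $(\R/\Z)^4$ together with compactness in the space of affine lines meeting the uniformly bounded set $\gamma(\R/\Z)$, one extracts a subsequence with $t^k_i\to t^*_i$ and $\ell_k\to\ell^*$. Uniform convergence $\gamma_k\to\gamma$ then places the four limit points $P_i:=\gamma(t^*_i)$ on $\ell^*$ in the inherited alternating order. The first crucial step is to exclude degenerations, namely to show that the four parameters $t^*_i$ and the four points $P_i$ are pairwise distinct. Here the \emph{essentialness} of Denne's quadrisecants is decisive: essentiality is a topologically stable condition that forbids the four-point configuration from collapsing in the limit without the nearby $\gamma_k$ already violating the knot class $\K$.

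Assuming the limiting alternating quadrisecant is nondegenerate, form an inscribed polygon $P\subset\gamma$ with vertices $\gamma(s_1),\dots,\gamma(s_n)$ containing $\{t^*_1,\dots,t^*_4\}$ among the $s_j$, refined in a way dictated by the quadrisecant. The companion polygon $P_k\subset\gamma_k$ with the same parameters then inherits the alternating quadrisecant, and for a sufficiently fine refinement $P_k$ is isotopic to $\gamma_k$, hence knotted of type $\K$. The classical polygonal version of Fáry--Milnor gives $\TC(P_k)\ge 4\pi$ for every large $k$. Since the total curvature of a polygon depends continuously on its vertices (away from degenerate configurations), $\TC(P)=\lim_k\TC(P_k)\ge 4\pi$, and by the elementary monotonicity of total curvature under inscription, $\TC(\gamma)\ge\TC(P)\ge 4\pi$.

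The principal obstacle is the nondegeneracy step. Without any uniform thickness bound on the $\gamma_k$ — which is genuinely unavailable precisely in the situations of interest, where $\gamma$ is non-embedded — the parameters $t^k_i$ or the points $\gamma_k(t^k_i)$ could a priori coalesce in the limit. Ruling this out will rely on essentiality rather than any metric information, the idea being that collapse would force the quadrisecant on $\gamma_k$ itself to become non-essential for large $k$, contradicting Denne's theorem. A parallel subtlety concerns the final polygonal step: arranging that the partition produces $P_k$ isotopic to $\gamma_k$ uniformly in $k$ requires combinatorial input from the alternating quadrisecant structure rather than a uniform thickness-based estimate.
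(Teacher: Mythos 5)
Your overall strategy --- Denne's alternating quadrisecants plus the inscribed quadrilateral $ABCDA$ whose four exterior angles are each $\pi$ --- is exactly the paper's starting point, but the step you flag as ``the principal obstacle'' is not merely a technical gap to be filled: the nondegeneracy you hope for is \emph{false}, and essentiality cannot rescue it. The limit curves of interest are precisely the non-embedded ones (e.g.\ $\Eth$-minimizing trefoils converging to the doubly covered circle), and for such sequences the thickness of $\g_k$ tends to zero, so the four quadrisecant points of $\g_k$ genuinely coalesce in pairs in the limit. Essentiality is a property of the quadrisecant relative to the complement of the embedded knot $\g_k$ for each fixed $k$; it imposes no separation of the four points that is uniform in $k$ once $\triangle[\g_k]\to0$, and nothing about the knot class of the nearby $\g_k$ is violated by their collapse. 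A proof along your lines therefore cannot avoid the degenerate configurations; it must handle them. This is what the paper does: after extracting limits $(a,b,c,d)$ of the quadrisecant parameters it runs an exhaustive case analysis over all coincidence patterns ($B=C$; $A=B=C$; $A=B=C=D$; with or without loops between coalescing parameters), using Lemma~\ref{lem:apprtang} --- if two parameter sequences merge, the unit secant $\protect\uvector{X_kY_k}$ converges to the tangent $\g'(z)$ of the limit curve --- together with Lemma~\ref{lem:stop} to insert auxiliary vertices $M_{BC}$, $B_\eps$, $C_\eps$, etc. The alternating order $A,C,B,D$ on the secant line then forces the exterior angles of these auxiliary inscribed polygons to sum to at least $4\pi$ (only the triple coincidence $a=b=c$ is outright impossible, since it would give $\g'(b)=-\g'(b)$).

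A secondary issue: your final step routes through making the inscribed polygon $P_k$ isotopic to $\g_k$ and invoking the polygonal F\'ary--Milnor theorem. Besides requiring a fineness of the partition that you cannot control uniformly in $k$ without thickness bounds, this detour is unnecessary. In the nondegenerate case the quadrilateral $ABCDA$ inscribed in the \emph{limit} curve has total curvature exactly $4\pi$ for purely geometric reasons (consecutive edges lie on the quadrisecant line and point in opposite directions), and Milnor's monotonicity of total curvature under inscription finishes the argument; no knottedness of the polygon is used anywhere in the paper's proof.
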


We begin with two auxiliary tools and abbreviate,
for any two vectors $X,Y\in\R^{3}$,
\[ \overrightarrow{XY} := Y-X. \]

\begin{lemma}[Approximating tangents]\label{lem:apprtang}
Let 
 $\seqn{\g}\subset C^1(\R/\Z,\R^3)$ be a sequence of 
 embedded arclength parametrized closed curves
 converging with respect to the $C^1$-norm to some limit curve~$\g $.
 Assume that there are  sequences 
 $\seqn x,\seqn y\subset [0,1)$ of parameters
 satisfying $x_k<y_k$ for all $k\in\N$ and
 $x_k,y_k\to z\in [0,1)$ as $k\to\infty$. Then one has for
  $X_k:=\g_k(x_k)$, $Y_k:=\g_k(y_k)$ 
 \begin{equation}\label{eq:direconv}
  \uvector{X_kY_k}=\frac{Y_k-X_k}{|Y_k-X_k|}
  \xrightarrow{k\to\infty} \g'(z)\in\mathbb S^2.
 \end{equation}
\end{lemma}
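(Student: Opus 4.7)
The plan is to represent the secant vector as an integral of tangent vectors, exploit the $C^{1}$-convergence to replace $\gamma_k'$ by $\gamma'$ uniformly, and then use continuity of $\gamma'$ at $z$ together with $|\gamma'(z)|=1$ to identify the limit direction.

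Concretely, set $h_k := y_k - x_k > 0$. Since $x_k, y_k \to z$ with $x_k < y_k$ in $[0,1)$, we have $h_k \to 0$ (so in particular the intrinsic distance and the Euclidean difference coincide for $k\gg1$). The fundamental theorem of calculus gives
\[
\frac{Y_k - X_k}{h_k} \;=\; \frac{1}{h_k}\int_{x_k}^{y_k} \gamma_k'(t)\, \d t
\;=\; \frac{1}{h_k}\int_{x_k}^{y_k} \gamma'(t)\,\d t \;+\; \frac{1}{h_k}\int_{x_k}^{y_k}\!\bigl(\gamma_k'(t)-\gamma'(t)\bigr)\,\d t .
\]
The second term is bounded in norm by $\|\gamma_k'-\gamma'\|_{C^0}\to0$. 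The first term is the average of $\gamma'$ over $[x_k,y_k]$, and since $\gamma'\in C^{0}(\R/\Z,\R^3)$ (as a uniform limit of continuous maps) and $x_k,y_k\to z$, this average converges to $\gamma'(z)$. Hence
\[
\frac{Y_k - X_k}{h_k} \;\xrightarrow{k\to\infty}\; \gamma'(z).
\]

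Because each $\gamma_k$ is parametrized by arclength, the uniform limit $\gamma'$ also satisfies $|\gamma'|\equiv 1$, so in particular $|\gamma'(z)|=1$. Consequently $|Y_k-X_k|/h_k\to 1$, which already implies $Y_k\neq X_k$ for all sufficiently large~$k$, so that the normalization in \eqref{eq:direconv} is well-defined. Dividing numerator and denominator by $h_k$ and passing to the limit then yields
\[
\uvector{X_kY_k} \;=\; \frac{(Y_k-X_k)/h_k}{\bigl|(Y_k-X_k)/h_k\bigr|} \;\xrightarrow{k\to\infty}\; \frac{\gamma'(z)}{|\gamma'(z)|} \;=\; \gamma'(z),
\]
as desired.

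There is no serious obstacle here; the only point requiring a bit of care is the separation of the secant quotient into an averaging term and an error term, and the observation that the $C^1$-convergence hypothesis is precisely what makes the error term tend to zero uniformly in the (shrinking) interval $[x_k,y_k]$. Embeddedness of the $\gamma_k$ plays no role in the proof — the conclusion holds for arbitrary $C^1$ arclength parametrized loops converging in $C^{1}$.
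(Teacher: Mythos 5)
Your proof is correct and follows essentially the same route as the paper's: both write the secant as an average of $\g_k'$ over $[x_k,y_k]$, use the $C^1$-convergence to control the error term, and invoke continuity of $\g'$ together with $|\g'|\equiv1$ to identify the limit. The only cosmetic differences are that the paper first extracts a convergent subsequence of the unit secants via compactness of $\S^2$ and uses embeddedness to guarantee $X_k\ne Y_k$ for \emph{all} $k$, whereas you obtain $X_k\ne Y_k$ for $k\gg1$ directly from $|Y_k-X_k|/h_k\to1$.
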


\begin{proof}

Since all $\g_k$ are injective we find that $X_k\not= Y_k$ for $k\in\N$, so
that the unit vectors $\uvector{X_kY_k}$ are well-defined, and a subsequence
(still denoted by $\uvector{X_kY_k}$) converges to  some limit unit vector
$d\in\S^2$ because $\S^2$ is compact. It suffices to show that $d=\g'(z)$
to conclude  that the \emph{whole} sequence
converges to $\g'(z),$ since any other subsequence of the $\uvector{X_kY_k}$
with limit $\tilde{d}\in\S^2$ satisfies $\tilde{d}=\g'(z)$ as well. 

 We compute
 \begin{align}\label{deviation}
 \begin{split}
  &{\uvector{X_kY_k}-\frac{\g(y_k)-\g(x_k)}{y_k-x_k}}
  ={\frac{\g_k(y_k)-\g_k(x_k)}{\abs{\g_k(y_k)-\g_k(x_k)}}-\frac{\g(y_k)-\g(x_k)}{y_k-x_k}} \\
  &=\frac{\int_0^1\dg_k(x_k+\theta(y_k-x_k))\d\theta}{\abs{\int_0^1\dg_k(x_k+\theta(y_k-x_k))\d\theta}}-\int_0^1\dg(x_k+\theta(y_k-x_k))\d\theta \\
  &=\br{\frac1{\abs{\int_0^1\dg_k(x_k+\theta(y_k-x_k))\d\theta}}-1}{\int_0^1\dg_k(x_k+\theta(y_k-x_k))\d\theta} + {} \\
  &\qquad{}+\int_0^1(\dg_k-\dg)(x_k+\theta(y_k-x_k))\d\theta \\
  &=\frac{1-\abs{\int_0^1\dg_k(x_k+\theta(y_k-x_k))\d\theta}}{\abs{\int_0^1\dg_k(x_k+\theta(y_k-x_k))\d\theta}}{\int_0^1\dg_k(x_k+\theta(y_k-x_k))\d\theta} + {} \\
  &\qquad{}+\int_0^1(\dg_k-\dg)(x_k+\theta(y_k-x_k))\d\theta.
 \end{split}
 \end{align}
 From the assumptions we infer
 
\begin{align*} 
 1&\ge\abs{\int_0^1\dg_k(x_k+\theta(y_k-x_k))\d\theta} \\
 &\ge
 1-2\|\g_k'-\g'\|_{L^\infty}-\abs{\int_0^1(\g'(x_k+\theta(y_k-x_k))-\g'(x_k))
 \d\theta}\\
 &\ge  1-o(1)\quad\textnormal{as $k\to\infty$,}
 \end{align*}
 so that in particular $\big|\int_0^1\dg_k(x_k+\theta(y_k-x_k))\d\theta\big|\ge 1/2$
 for $k\gg 1$. Inserting these two facts into \eqref{deviation}
 yields
 \begin{align*}
  &\abs{\uvector{X_kY_k}-\frac{\g(y_k)-\g(x_k)}{y_k-x_k}}
  \le 2\abs{1-\Big|\int_0^1\dg_k(x_k+\theta(y_k-x_k))\d\theta\Big|} + \lnorm[\infty]{\dg_k-\dg}=o(1)
 \end{align*}
 as $k\to\infty.$
\end{proof}

\begin{lemma}[Regular curves cannot stop]\label{lem:stop}\hfill\\
 Let $\g\in C^1\rzd$ be regular, i.e., $|\g'|>0$ on $\R/\Z$, and let
 $0\le x<y<1$ such that $\g(x)=\g(y)$.
 Then there is some $z\in(x,y)$ such that $\g(x)\ne\g(z)\ne\g(y)$.
\end{lemma}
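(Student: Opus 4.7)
The plan is to argue by contradiction, exploiting the fact that the coincidence $\g(x)=\g(y)$ forces the two strict inequalities in the desired conclusion to merge into one. Writing $p:=\g(x)=\g(y)$, the assertion $\g(x)\ne\g(z)\ne\g(y)$ is equivalent to the single condition $\g(z)\ne p$, so my task reduces to producing just one $z\in(x,y)$ with $\g(z)\ne p$.

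To set up the contradiction I will assume the opposite, namely that $\g(z)=p$ for every $z\in(x,y)$. Continuity of $\g\in C^{1}(\R/\Z,\R^{3})$ propagates this identity to the closed interval $[x,y]$, making $\g$ constant on that interval. Differentiating then yields $\dg(z)=0$ for all $z\in(x,y)$, which stands in direct conflict with the standing regularity hypothesis $\abs{\dg}>0$ on $\R/\Z$. This contradiction will produce the required point $z$.

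There is no substantive obstacle in this short argument. The only point worth flagging is the initial logical reduction: under $\g(x)=\g(y)$ the conjunction of $\g(z)\ne\g(x)$ and $\g(z)\ne\g(y)$ collapses to the single requirement $\g(z)\ne p$, and its negation therefore corresponds to full constancy of $\g$ on $(x,y)$ rather than to a finitely determined condition. Once this reduction is in place, the regularity assumption closes the proof immediately.
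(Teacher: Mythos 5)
Your proof is correct and is essentially the same contradiction argument as the paper's: negating the conclusion forces $\g$ to be constant on the interval $(x,y)$ of positive length, so $\g'$ vanishes there, contradicting regularity. The explicit logical reduction to the single condition $\g(z)\ne p$ is a nice clarification but does not change the substance.
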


\begin{proof}
 Assuming the contrary leads to a situation where $\g$ is constant on an interval of positive measure,
 i.e.\@ $\dg$ vanishes contradicting $\abs\dg >0$.
\end{proof}

\begin{proof}[Theorem~\ref{thm:fm}]
 Let $\seqn{\g}$ be a sequence of knots in~$\cC(\K) $ converging with respect to the $C^1$-norm to some limit curve~$\g$.
 By Denne's theorem~\cite[Main Theorem, p. 6]{denne}, 
 each knot $\g_k$, $k\in\N$, has an alternating quadrisecant, 
 i.e., there are numbers
 $0\le a_k<b_k<c_k<d_k<1$ such that the points $A_k:=\g_k(a_k)$, $B_k:=\g_k(b_k)$, $C_k:=\g_k(c_k)$, $D_k:=\g_k(d_k)$ are collinear
 and appear in the order $A$, $C$, $B$, $D$ on the quadrisecant line, see Figure~\ref{fig:fm1}.
 (Note that our labeling of these points differs from that in~\cite{denne}.)
 \begin{figure}
  \includegraphics[scale=.5]{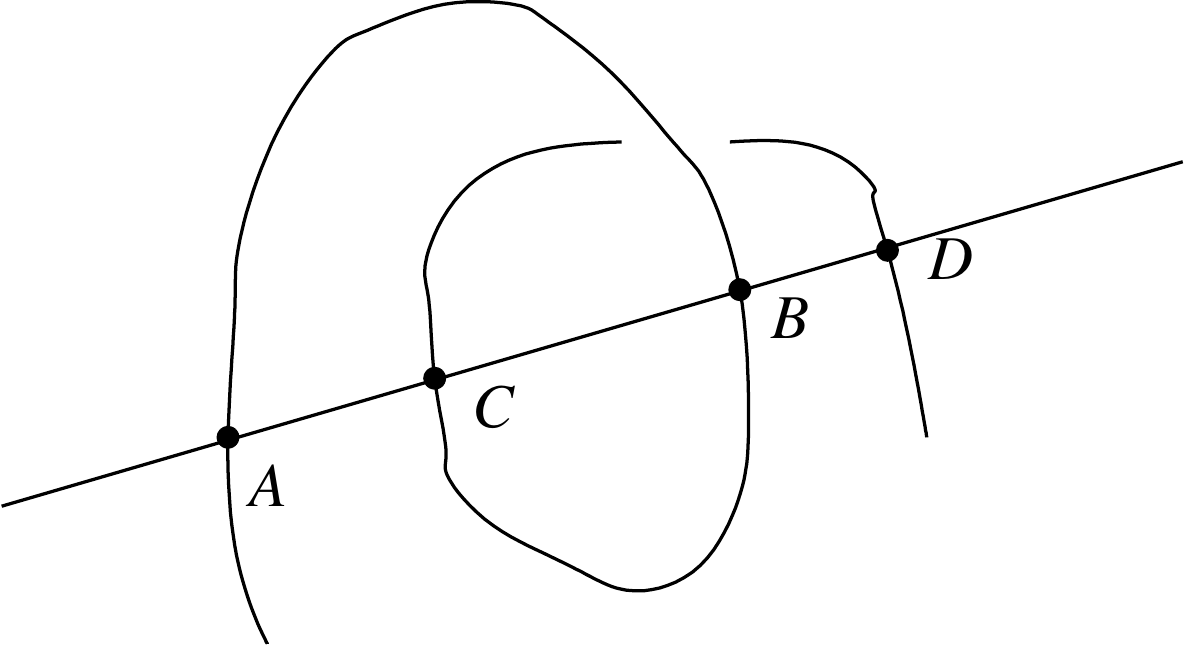}
  \centering
  \caption{An alternating quadrisecant}
  \label{fig:fm1}
 \end{figure}
 Without loss of generality we may assume $a_k\equiv0$. By compactness, we may pass to a subsequence (without relabelling)
 such that $(a_k,b_k,c_k,d_k)$ converges to $(a,b,c,d)$ as $k\to\infty$. Of course,
 \begin{equation*}
  0=a\le b\le c\le d\le 1.
 \end{equation*}
 Note that some or all of the corresponding points $A,B,C,D\in\R^3$, which are still collinear, may coincide and that $1\equiv0$ in $\R/\Z$.
 By Milnor~\cite[Theorem 2.2]{milnor} the total curvature of~$\g$ is bounded from below by the total curvature of any 
inscribed (closed) polygon.
 For each possible location of the points $A,B,C,D$ we estimate the total curvature of $\gamma$ by means
of suitably chosen inscribed polygons.
 \renewcommand{\labelenumi}{\arabic{enumi}.}
 \renewcommand{\labelenumii}{\arabic{enumi}.\arabic{enumii}.}
 \renewcommand{\labelenumiii}{\arabic{enumi}.\arabic{enumii}.\arabic{enumiii}.}
 \renewcommand{\labelenumiv}{\arabic{enumi}.\arabic{enumii}.\arabic{enumiii}.\arabic{enumiv}.}
 \renewcommand{\theenumi}{\arabic{enumi}}
 \renewcommand{\theenumii}{\arabic{enumii}}
 \renewcommand{\theenumiii}{\arabic{enumiii}}
 \renewcommand{\theenumiv}{\arabic{enumiv}}
 \makeatletter\renewcommand\p@enumiii{\theenumi.\theenumii.}\makeatother
 \begin{enumerate}[font=\bfseries,
align=left, leftmargin=0pt, labelindent=\parindent,
listparindent=\parindent, labelwidth=0pt, itemindent=1 ex]
  \item If $\abs{BC}>0$ the polygon $ABCDA$ inscribed in $\gamma$ is non-degenerate in the sense that $0=a<b<c<d<1$.
  All exterior angles angles of $ABCDA$ equal $\pi$ and sum up to~$4\pi$; hence 
 \eqref{eq:fm} holds.
  \item $\abs{BC}=0$, i.e.\@ $B=C$
  \begin{enumerate}[font=\bfseries,
align=left, leftmargin=0pt, labelindent=\parindent,
listparindent=\parindent, labelwidth=0pt, itemindent=1 ex]
   \item $\min\br{\abs{AC},\abs{BD}}>0$
   \begin{enumerate}[font=\bfseries,
align=left, leftmargin=0pt, labelindent=\parindent,
listparindent=\parindent, labelwidth=0pt, itemindent=1 ex]
    \item\label{item:approx} If $b=c$, i.e.\@ there is no loop of $\g$ between~$B$ and~$C$, we may consider the polygon
    $AB_\eps C_\eps DA$ with $B_\eps:=\g(b-\eps)$ and $C_\eps:=\g(c+\eps)$.
    Note that, in general, the point $B=C$ does not belong to the polygon, see Figure~\ref{fig:fm2}.
    By arc-length parametrization, $B_\eps$ and $C_\eps$ cannot coincide for $0<\eps\ll1$.
    \begin{figure}
     \centering
     \includegraphics[scale=.5]{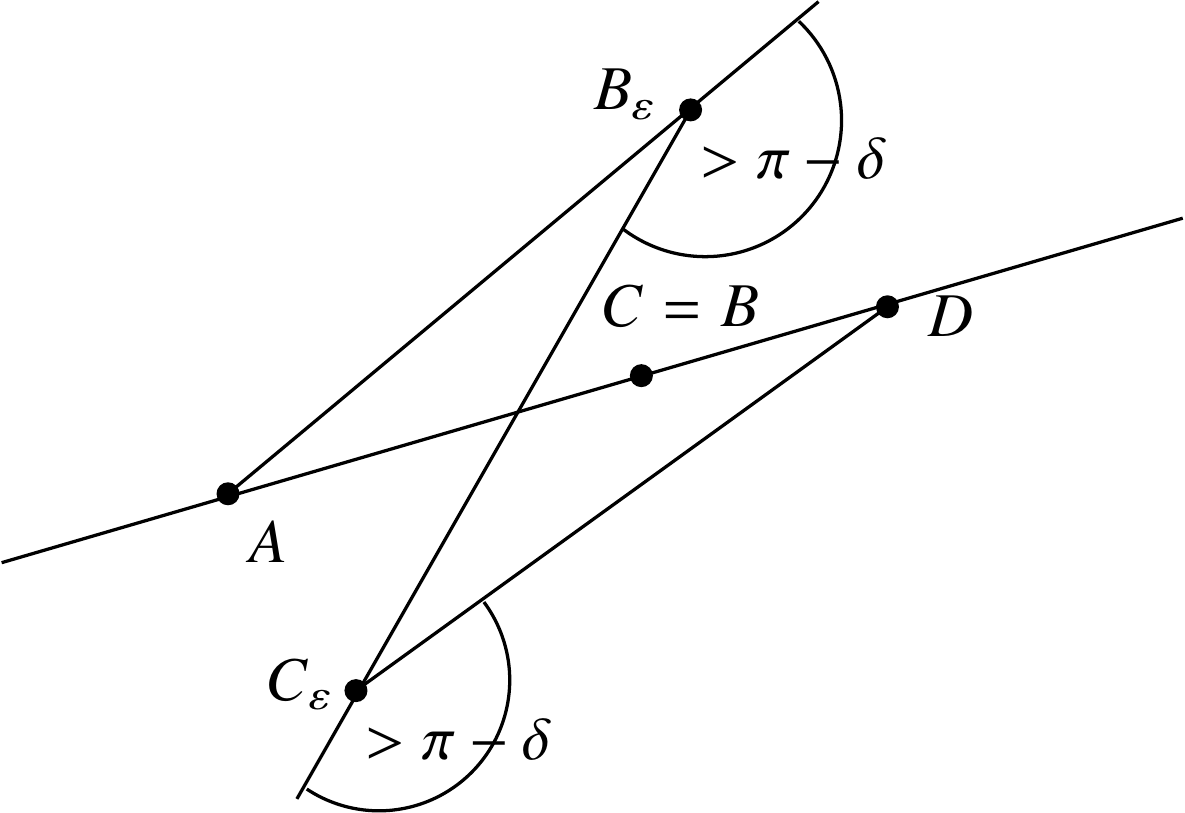}%
     \includegraphics[scale=.5]{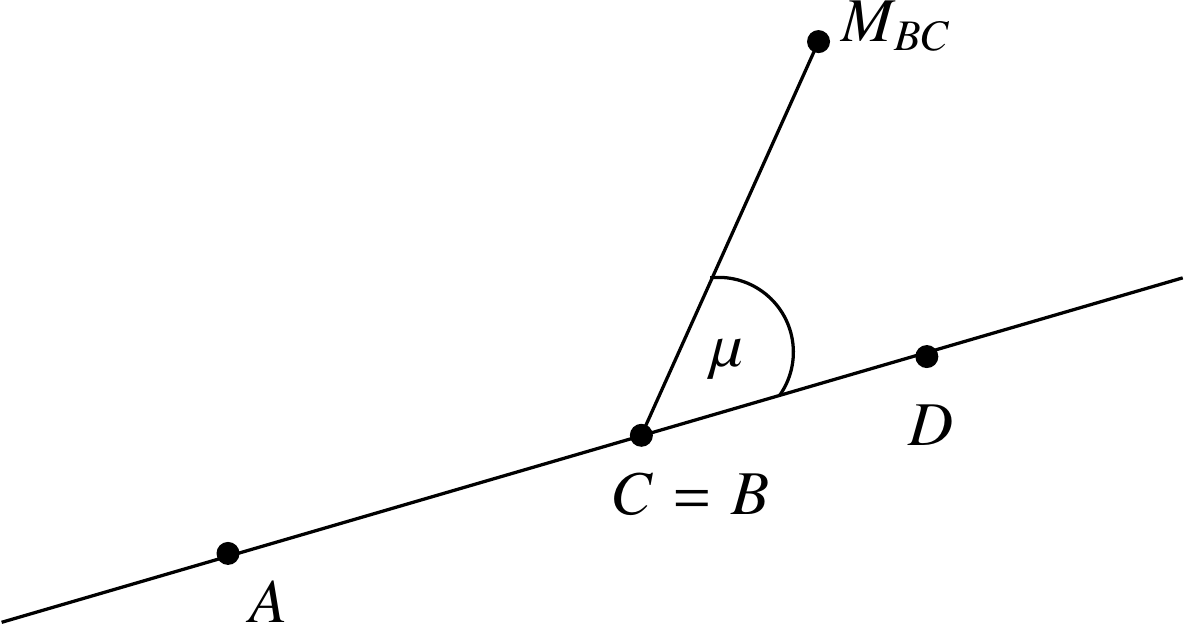}
     \caption{(left) Situation~\ref{item:approx},
     (right) Situation~\ref{item:auxpoint}.}
     \label{fig:fm2}
     \label{fig:fm3}
    \end{figure}
    We deduce that $\uvector{B_\eps C_\eps}$
    converges to $\dg(b)$ as $\eps\to 0$. From Lemma~\ref{lem:apprtang} we infer that $\dg(b)$ points towards~$A$
    (due to $\uvector{B_k A_k}=\uvector{B_k C_k}\to\gamma'(b)$ and the order $A_kC_kB_kD_k$ of the approximating points on the respective quadrisecant).
    Therefore, since $B_\eps\to B$ and $C_\eps\to C$ as $\eps\to 0$, for given $\delta>0$ we obtain some $\eps_\delta>0$ such that
    $\angle\br{\overrightarrow{AB_\eps},\overrightarrow{B_\eps C_\eps}}>\pi-\delta$, 
    $\angle\br{\overrightarrow{B_\eps C_\eps},\overrightarrow{C_\eps D}}>\pi-\delta$,
$\angle\br{\overrightarrow{C_\eps D},\overrightarrow{D A}}>\pi-\delta$, and
$\angle\br{\overrightarrow{D A},\overrightarrow{A B_\eps}}>\pi-\delta$
    for all $\eps\in(0,\eps_\delta]$. We arrive at a lower estimate of $4\pi-4\delta$ for the total curvature of the polygon $AB_\eps C_\eps DA$
    which is a lower bound for the total curvature of~$\g$. Letting $\delta\searrow0$ yields the
    desired.
    \item\label{item:auxpoint} If $b<c$ there is a loop of $\g$ between~$B$ and~$C$ according to 
Lemma \ref{lem:stop} such that we may choose some $m_{bc}\in(b,c)$ with
    $B=C\ne M_{BC}:=\g(m_{bc})$. The total curvature of $\g$ is bounded below by the total curvature of the polygon $ABM_{BC}CDA$.
    Let $\mu:=\angle\br{\overrightarrow{AB},\overrightarrow{BM_{BC}}}$.
    We obtain
    \begin{align*}
     \angle\br{\overrightarrow{AB},\overrightarrow{BM_{BC}}} &= \mu, &
     \angle\br{\overrightarrow{BM_{BC}},\overrightarrow{M_{BC}C}} &= \pi, &
     \angle\br{\overrightarrow{M_{BC}C},\overrightarrow{CD}} &= \pi-\mu, \\
     \angle\br{\overrightarrow{CD},\overrightarrow{DA}} &= \pi, &
     \angle\br{\overrightarrow{DA},\overrightarrow{AB}} &= \pi.
    \end{align*}
   \end{enumerate}
   \item $\abs{AC}=0$, $\abs{BD}>0$, i.e.\@ $A=B=C\ne D$
    \begin{enumerate}[font=\bfseries,
align=left, leftmargin=0pt, labelindent=\parindent,
listparindent=\parindent, labelwidth=0pt, itemindent=1 ex]
     \item\label{item:contrad}The situation $a=b=c$ cannot occur as applying Lemma~\ref{lem:apprtang} twice
      (and recalling the order $A_kC_kB_kD_k$ of the approximating points on the respective quadrisecant) yields $\dg(b)=-\dg(b)$
      contradicting $\abs{\dg(b)}=1$.
     \item\label{item:approx-auxpoint} If $a=b<c$ we may simultaneously apply the techniques from~\ref{item:approx}
      and~\ref{item:auxpoint} considering the polygon $A_\eps BB_\eps M_{BC}CDA_\eps$ with
      $A_\eps:=\g(a-\eps)$ and $B_\eps:=\g(b+\eps)$.
      Note that, in contrast to~\ref{item:approx}, we inserted the point $B=C=A$ between $A_\eps$ and $B_\eps$
      which is admissible due to $a-\eps<a=b<b+\eps$.
      We obtain the triangles $BB_\eps M_{BC}$ and $CDA_\eps$. We label the \emph{inner} angles (starting from $B=C$
in each of the two triangles)
      by $\zeta_\eps$, $\eta_\eps$, $\theta_\eps$ and $\lambda_\eps$, $\mu_\eps$, $\nu_\eps$ and define
      \begin{align*}
       \sigma_\eps &:= \angle\br{\overrightarrow{A_\eps B},\overrightarrow{BB_\eps}}, &
       \tau &:= \angle\br{\overrightarrow{M_{BC}C},\overrightarrow{CD}},
      \end{align*}
      see Figure~\ref{fig:fm4}.
      \begin{figure}
       \centering
       \includegraphics[scale=.5]{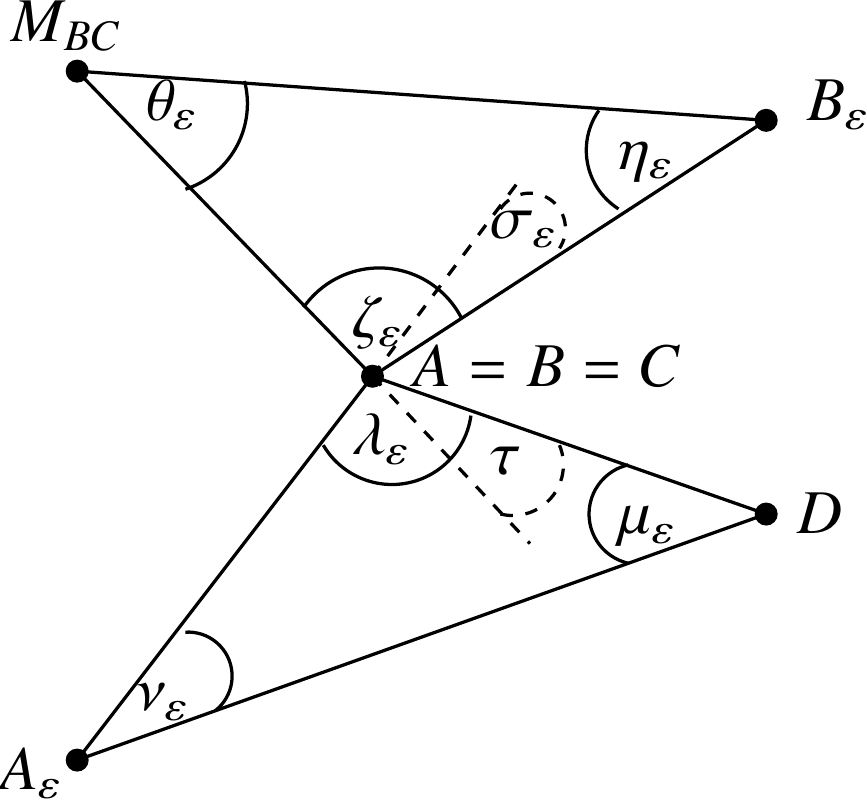}\qquad
       \includegraphics[scale=.5]{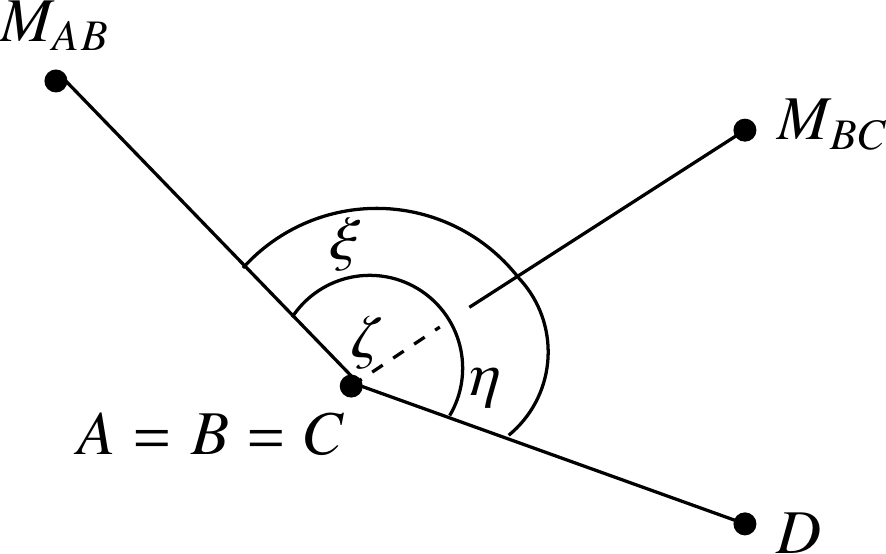}
       \caption{(left) Situation~\ref{item:approx-auxpoint}. Note that, in general, the triangles $BB_\eps M_{BC}$ and $CDA_\eps$
       are not coplanar and the angles $\sigma_\eps$ and $\tau$ do not belong to any of the planes defined by the triangles.
       (right) Situations~\ref{item:spherical} and~\ref{item:3loops}. For the latter one has to replace~$D$ by~$M_{DA}$.}
       \label{fig:fm4}
       \label{fig:fm5}
      \end{figure}
      Therefore, the total curvature of the polygon amounts to the sum of \emph{exterior} angles
      \begin{equation*}
       \sigma_\eps + \br{\pi-\eta_\eps} + \br{\pi-\theta_\eps} + \tau + \br{\pi-\mu_\eps} + \br{\pi-\nu_\eps}
       = \sigma_\eps + \tau + 2\pi + \zeta_\eps + \lambda_\eps.
      \end{equation*}
      Since $\uvector{A_\eps B}$ and $\uvector{BB_\eps}$ \/\/ approximate $\dg(b)$,
      which is a positive multiple of $\overrightarrow{CD}$ (because $c<d$ for $C\not= D$ which implies
$\uvector{C_k D_k}=\uvector{A_k C_k}\to\gamma'(b)=\uvector{C D}=\uvector{B D}$ by Lemma \ref{lem:apprtang}),
      we deduce $\sigma_\eps\to 0$, $\zeta_\eps\to\pi-\tau$ and $\lambda_\eps\nearrow\pi$ as $\eps\searrow0$
      from Lemma~\ref{lem:apprtang}.
     \item For the case $a<b=c$ we consider the polygon $AM_{AB}B_\eps CC_\eps DA$ for $B_\eps:=\gamma(b-\eps)$ and
$C_\eps:=\gamma(c+\eps)$
      which may be treated similarly to~\ref{item:approx-auxpoint}.
     \item\label{item:spherical} If $a<b<c$ we consider the polygon $AM_{AB}BM_{BC}CDA$ applying the technique
      from~\ref{item:auxpoint} twice. Defining
      \begin{align*}
       \xi&:=\angle\br{\overrightarrow{BM_{AB}},\overrightarrow{BM_{BC}}}, &
       \eta&:=\angle\br{\overrightarrow{CM_{BC}},\overrightarrow{CD}}, &
       \zeta&:=\angle\br{\overrightarrow{AD},\overrightarrow{AM_{AB}}}
      \end{align*}
      as indicated in Figure~\ref{fig:fm5} we arrive at
      \begin{align*}
       \angle\br{\overrightarrow{AM_{AB}},\overrightarrow{M_{AB}B}} &= \pi, &
       \angle\br{\overrightarrow{M_{AB}B},\overrightarrow{BM_{BC}}} &= \pi-\xi, &
       \angle\br{\overrightarrow{BM_{BC}},\overrightarrow{M_{BC}C}} &= \pi, \\
       \angle\br{\overrightarrow{M_{BC}C},\overrightarrow{CD}} &= \pi-\eta, &
       \angle\br{\overrightarrow{CD},\overrightarrow{DA}} &= \pi, &
       \angle\br{\overrightarrow{DA},\overrightarrow{AM_{AB}}} &= \pi-\zeta
      \end{align*}
      so the total curvature of the polygon amounts to~$6\pi-\xi-\eta-\zeta$.
      Consider the unit vectors $\uvector{AM_{AB}}$, $\uvector{AM_{BC}}$, and $\uvector{AD}$.
      Unless they are coplanar they define a unique triangle on~$\mathbb S^2$ of area $<2\pi$ which is equal to the
      sum of the three angles between them, namely $\xi$, $\eta$, and $\zeta$.
      Therefore, we obtain the estimate $\xi+\eta+\zeta\le 2\pi$.
    \end{enumerate}
   \item $\abs{AC}>0$, $\abs{BD}=0$, i.e.\@ $A\ne B=C=D$. This case is symmetric to the preceding one.
   \item $\abs{AC}=\abs{BD}=0$, i.e.\@ $A=B=C=D$.
    For $\Box_1,\Box_2,\Box_3,\Box_4\in\set{{=},{<}}$ we abbreviate
    \begin{equation*}
     (\Box_1,\Box_2,\Box_3,\Box_4) := \br{0\equiv a\Box_1b\Box_2c\Box_3d\Box_41\equiv0}.
    \end{equation*}
    Obviously there are $16$ cases.
   \begin{enumerate}[font=\bfseries,
align=left, leftmargin=0pt, labelindent=\parindent,
listparindent=\parindent, labelwidth=0pt, itemindent=1 ex]
    \item Impossible cases: as shown in~\ref{item:contrad}, there cannot arise two neighboring equality signs.
     This excludes the following nine situations:
     $({=},{=},{=},{=})$,
     $({=},{=},{=},{<})$,
     $({=},{=},{<},{=})$,
     $({=},{=},{<},{<})$,
     $({=},{<},{=},{=})$,
     $({=},{<},{<},{=})$,
     $({<},{=},{=},{=})$,
     $({<},{=},{=},{<})$,
     $({<},{<},{=},{=})$.
    \item\label{item:2loops} Two loops: for the situation $({=},{<},{=},{<})$ we consider the polygon
     \begin{equation*}
      A_\eps BB_\eps M_{BC}C_\eps DD_\eps M_{DA}A_\eps
     \end{equation*}
     with $A_\eps:=\g(a-\eps)$, $B_\eps:=\g(b+\eps)$, $C_\eps:=\g(c-\eps)$, and $D_\eps:=\g(d+\eps)$.
     We obtain the (in general non-planar) quadrilaterals $BB_\eps M_{BC}C_\eps$
     and $DD_\eps M_{DA}A_\eps$. Proceeding similarly to~\ref{item:approx-auxpoint},
     we label the \emph{inner} angles (starting from $B$)
     by $\zeta_\eps$, $\eta_\eps$, $\theta_\eps$, $\iota_\eps$ and $\lambda_\eps$, $\mu_\eps$, $\nu_\eps$, $\xi_\eps$ and define
     \begin{align*}
      \sigma_\eps &:= \angle\br{\overrightarrow{A_\eps B},\overrightarrow{BB_\eps}}, &
      \tau_\eps &:= \angle\br{\overrightarrow{C_\eps D},\overrightarrow{DD_\eps}},
     \end{align*}
     see Figure~\ref{fig:fm6}.
     \begin{figure}
      \centering
      \includegraphics[scale=.5]{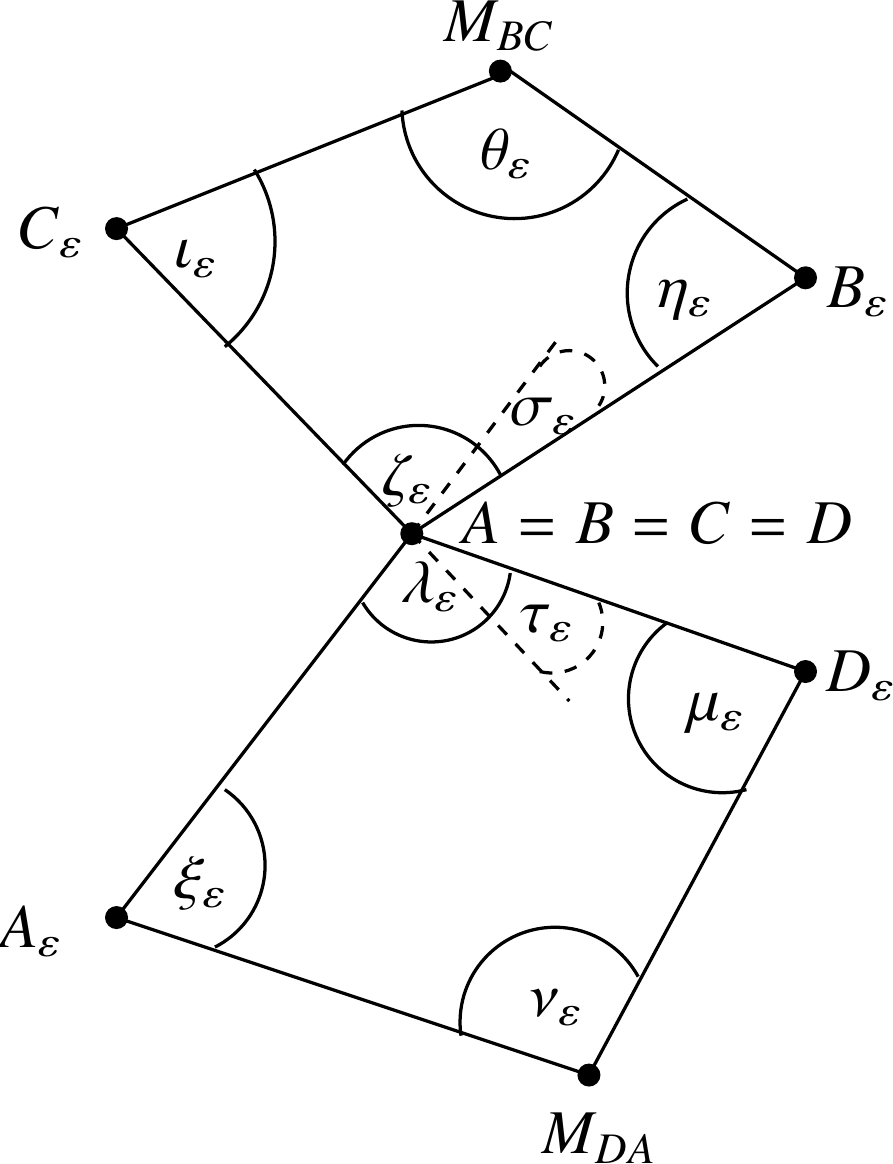}\qquad
      \includegraphics[scale=.5]{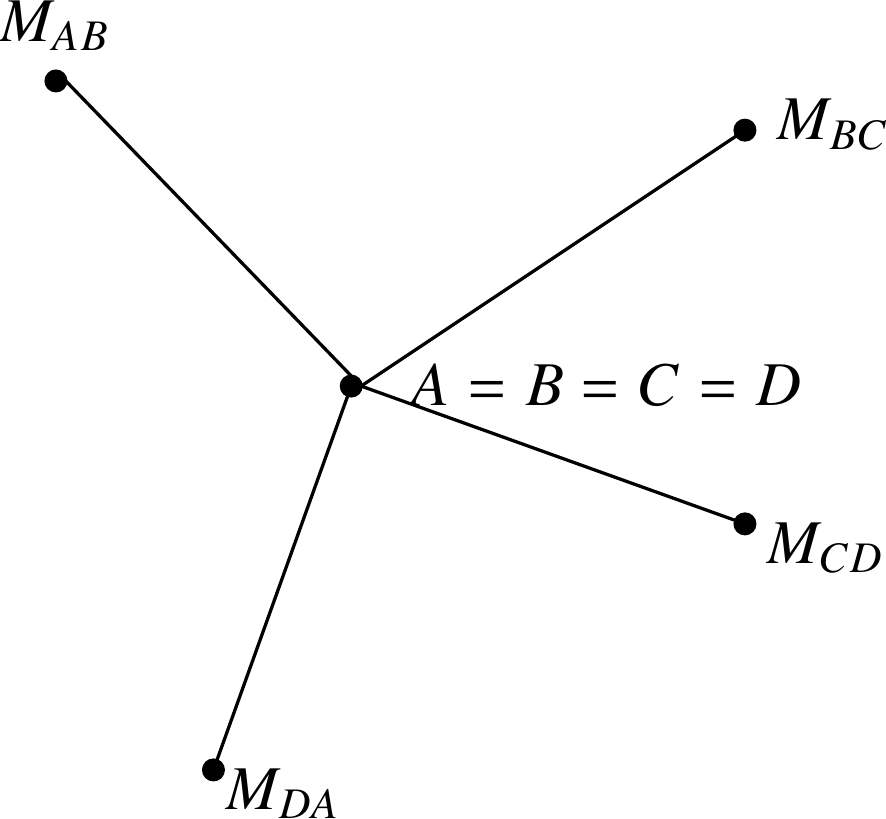}
      \caption{(left) Situation~\ref{item:2loops}. Note that both quadrilaterals $BB_\eps M_{BC}C_\eps$
      and $DD_\eps M_{DA}A_\eps$ are, in general, non-planar.
      (right) Situation~\ref{item:4loops}.}
      \label{fig:fm6}
      \label{fig:fm7}
     \end{figure}
     For the angular sum in a non-planar quadrilateral we obtain
     \begin{align*}
      \zeta_\eps+\eta_\eps+\theta_\eps+\iota_\eps&\le2\pi & \lambda_\eps+\mu_\eps+\nu_\eps+\xi_\eps\le2\pi.
     \end{align*}
     The total curvature of the polygon amounts to the sum of \emph{exterior} angles
     \begin{align*}
      &\sigma_\eps + \br{\pi-\eta_\eps} + \br{\pi-\theta_\eps} + \br{\pi-\iota_\eps}
      + \tau_\eps + \br{\pi-\mu_\eps} + \br{\pi-\nu_\eps} + \br{\pi-\xi_\eps} \\
      &\ge \sigma_\eps + \tau_\eps + 2\pi + \zeta_\eps + \lambda_\eps.
     \end{align*}
     From Lemma~\ref{lem:apprtang} we infer $\zeta_\eps,\lambda_\eps\nearrow\pi$.
     The case $({<},{=},{<},{=})$ is shifted by one position.
     \item\label{item:3loops} Three loops: the case $({<},{<},{<},{=})$ leads to the polygon
     \begin{equation*}
      AM_{AB}BM_{BC}CM_{CD}D
     \end{equation*}
     which is treated similarly
     to~\ref{item:spherical}; here $\overrightarrow{CM_{CD}}$ plays the r\^ole of $\overrightarrow{CD}$
in~\ref{item:spherical}.
     The shifted cases $({=},{<},{<},{<})$, $({<},{=},{<},{<})$, $({<},{<},{=},{<})$ are symmetric.
     \item\label{item:4loops} Four loops: As~\ref{item:3loops} in fact works for $({<},{<},{<},{\le})$
     it also covers the situation $({<},{<},{<},{<})$. Alternatively we consider the polygon
     \begin{equation*}
      AM_{AB}BM_{BC}CM_{CD}DM_{DA}A
     \end{equation*}
     as drawn in Figure~\ref{fig:fm7} with
     \begin{align*}
      &\angle\br{\overrightarrow{AM_{AB}},\overrightarrow{M_{AB}B}}
      =\angle\br{\overrightarrow{BM_{BC}},\overrightarrow{M_{BC}C}}
      =\angle\br{\overrightarrow{CM_{CD}},\overrightarrow{M_{CD}D}}\\
      &=\angle\br{\overrightarrow{DM_{DA}},\overrightarrow{M_{DA}A}}
      =\pi.
     \end{align*}
   \end{enumerate}
  \end{enumerate}
 \end{enumerate}
\end{proof}

\end{appendix}



\begin{thebibliography}{10}

\bibitem{artin}
Emil Artin.
\newblock \href {http://dx.doi.org/10.1007/BF02950718} {Theorie der {Z}\"opfe}.
\newblock {\em Abh. Math. Sem. Univ. Hamburg}, 4(1):47--72, 1925.

\bibitem{sossinsky}
Sergey Avvakumov and Alexey Sossinsky.
\newblock \href {http://dx.doi.org/10.1134/S1061920814040013} {On the normal
  form of knots}.
\newblock {\em Russ. J. Math. Phys.}, 21(4):421--429, 2014.

\bibitem{blatt:isot}
Simon Blatt.
\newblock Note on continuously differentiable isotopies.
\newblock
  \href{http://www.instmath.rwth-aachen.de/Preprints/blatt20090825.pdf}{Report}~34,
  Institute for Mathematics, RWTH Aachen, August 2009.

\bibitem{blatt-reiter2}
Simon Blatt and Philipp Reiter.
\newblock \href {http://dx.doi.org/10.1007/s00229-011-0528-8} {Stationary
  points of {O}'{H}ara's knot energies}.
\newblock {\em Manuscripta Math.}, 140(1-2):29--50, 2013.

\bibitem{blatt-reiter-proc}
Simon Blatt and {\relax Ph}ilipp Reiter.
\newblock \href {http://dx.doi.org/10.1088/1742-6596/544/1/012020} {How nice
  are critical knots? {R}egularity theory for knot energies}.
\newblock {\em J. Phys.: Conf. Ser.}, 544:012020, 2014.

\bibitem{blatt-reiter-mbmb}
Simon {Blatt} and {\relax Ph}ilipp {Reiter}.
\newblock \href {http://dx.doi.org/10.2478/mlbmb-2014-0004} {Modeling repulsive
  forces on fibres via knot energies}.
\newblock {\em Mol. Based Math. Biol.}, 2:56--72, 2014.

\bibitem{blatt-reiter-schikorra_2012}
Simon {Blatt}, Philipp {Reiter}, and Armin {Schikorra}.
\newblock \href {http://arxiv.org/abs/1202.5426} {Hard analysis meets critical
  knots ({S}tationary points of the {M}oebius energy are smooth)}.
\newblock Accepted by {T}ransactions of the {AMS}, 2012.

\bibitem{BR}
Gregory Buck and Eric~J. Rawdon.
\newblock \href {http://dx.doi.org/10.1103/PhysRevE.70.011803} {Role of
  flexibility in entanglement}.
\newblock {\em Phys. Rev. E}, 70:011803, Jul 2004.

\bibitem{BZ}
Gerhard Burde and Heiner Zieschang.
\newblock \href {http://www.degruyter.com/viewbooktoc/product/14387} {{\em
  Knots}}, volume~5 of {\em de Gruyter Studies in Mathematics}.
\newblock Walter de Gruyter \& Co., Berlin, second edition, 2003.

\bibitem{crowell-fox}
Richard~H. Crowell and Ralph~H. Fox.
\newblock \href {http://link.springer.com/book/10.1007/978-1-4612-9935-6} {{\em
  Introduction to knot theory}}.
\newblock Springer-Verlag, New York-Heidelberg, 1977.
\newblock Reprint of the 1963 original, Graduate Texts in Mathematics, No. 57.

\bibitem{denne}
Elizabeth {Denne}.
\newblock \href {http://arxiv.org/abs/math/0510561} {{Alternating Quadrisecants
  of Knots}}.
\newblock {\em ArXiv Mathematics e-prints}, October 2005.

\bibitem{dej}
Yuanan Diao, Claus Ernst, and E.~J. Janse~van Rensburg.
\newblock \href {http://dx.doi.org/10.1017/S0305004198003338} {Thicknesses of
  knots}.
\newblock {\em Math. Proc. Cambridge Philos. Soc.}, 126(2):293--310, 1999.

\bibitem{fary}
Istv{\'a}n F{\'a}ry.
\newblock \href {http://www.numdam.org/item?id=BSMF_1949__77__128_0} {Sur la
  courbure totale d'une courbe gauche faisant un n\oe ud}.
\newblock {\em Bull. Soc. Math. France}, 77:128--138, 1949.

\bibitem{fenchel1930}
Werner Fenchel.
\newblock \href {http://eudml.org/doc/145860} {Geschlossene {R}aumkurven mit
  vorgeschriebenem {T}angentenbild.}
\newblock {\em Jahresbericht der Deutschen Mathematiker-Vereinigung},
  39:183--185, 1930.

\bibitem{gallotti-pierre-louis_2006}
Riccardo Gallotti and Olivier Pierre-Louis.
\newblock \href {http://dx.doi.org/10.1103/PhysRevE.75.031801} {Stiff knots}.
\newblock {\em Phys. Rev. E (3)}, 75(3):031801, 14, 2007.

\bibitem{gm}
Oscar Gonzalez and John~H. Maddocks.
\newblock \href {http://dx.doi.org/10.1073/pnas.96.9.4769} {Global curvature,
  thickness, and the ideal shapes of knots}.
\newblock {\em Proc. Natl. Acad. Sci. USA}, 96(9):4769--4773 (electronic),
  1999.

\bibitem{gmsm}
Oscar Gonzalez, John~H. Maddocks, Friedemann Schuricht, and Heiko von~der
  Mosel.
\newblock \href {http://dx.doi.org/10.1007/s005260100089} {Global curvature and
  self-contact of nonlinearly elastic curves and rods}.
\newblock {\em Calc. Var. Partial Differential Equations}, 14(1):29--68, 2002.

\bibitem{he:elghf}
Zheng-Xu He.
\newblock \href
  {http://dx.doi.org/10.1002/(SICI)1097-0312(200004)53:4<399::AID-CPA1>3.3.CO;2-4}
  {The {E}uler-{L}agrange equation and heat flow for the {M}\"obius energy}.
\newblock {\em Comm. Pure Appl. Math.}, 53(4):399--431, 2000.

\bibitem{koch-engelhardt}
Richard Koch and Christoph Engelhardt.
\newblock \href {http://www.heldermann-verlag.de/jgg/jgg01_05/jgg0203.pdf}
  {Closed space curves of constant curvature consisting of arcs of circular
  helices}.
\newblock {\em J. Geom. Graph.}, 2(1):17--31, 1998.

\bibitem{LS:cs}
Joel Langer and David~A. Singer.
\newblock \href {http://dx.doi.org/10.1016/0040-9383(85)90046-1} {Curve
  straightening and a minimax argument for closed elastic curves}.
\newblock {\em Topology}, 24(1):75--88, 1985.

\bibitem{lin-schwetlick_2010}
Chun-Chi Lin and Hartmut~R. Schwetlick.
\newblock \href {http://dx.doi.org/10.1007/s00526-010-0328-0} {On a flow to
  untangle elastic knots}.
\newblock {\em Calc. Var. Partial Differential Equations}, 39(3-4):621--647,
  2010.

\bibitem{lsdr}
Richard~A. Litherland, Jonathan~K. Simon, Oguz~C. Durumeric, and Eric~J.
  Rawdon.
\newblock \href {http://dx.doi.org/10.1016/S0166-8641(97)00210-1} {Thickness of
  knots}.
\newblock {\em Topology Appl.}, 91(3):233--244, 1999.

\bibitem{mcatee_2007}
Jenelle~Marie McAtee~Ganatra.
\newblock \href {http://dx.doi.org/10.1142/S0218216507005348} {Knots of
  constant curvature}.
\newblock {\em J. Knot Theory Ramifications}, 16(4):461--470, 2007.

\bibitem{milnor}
John~W. Milnor.
\newblock \href {http://dx.doi.org/10.2307/1969467} {On the total curvature of
  knots}.
\newblock {\em Ann. of Math. (2)}, 52:248--257, 1950.

\bibitem{oha:en}
Jun O'Hara.
\newblock \href {http://dx.doi.org/10.1016/0040-9383(91)90010-2} {Energy of a
  knot}.
\newblock {\em Topology}, 30(2):241--247, 1991.

\bibitem{reiter:isot}
{\relax Ph}ilipp Reiter.
\newblock All curves in a {$C^1$}-neighbourhood of a given embedded curve are
  isotopic.
\newblock
  \href{http://www.instmath.rwth-aachen.de/Preprints/reiter20051017.pdf}{Report}~4,
  Institute for Mathematics, RWTH Aachen, October 2005.

\bibitem{reiter:atme}
{\relax Ph}ilipp Reiter.
\newblock \href {http://dx.doi.org/10.3934/cpaa.2010.9.1463} {Regularity theory
  for the {M}\"obius energy}.
\newblock {\em Commun. Pure Appl. Anal.}, 9(5):1463--1471, 2010.

\bibitem{reiter:rkepdc}
{\relax Ph}ilipp Reiter.
\newblock \href {http://dx.doi.org/10.1002/mana.201000090} {Repulsive knot
  energies and pseudodifferential calculus for {O'Hara}'s knot energy family
  {$E^{(\alpha)}$, $\alpha\in[2,3)$}}.
\newblock {\em Math. Nachr.}, 285(7):889--913, 2012.

\bibitem{rolfsen}
Dale Rolfsen.
\newblock \href {http://www.ams.org/bookpages/chel-346} {{\em Knots and
  links}}.
\newblock Publish or Perish, Inc., Berkeley, Calif., 1976.
\newblock Mathematics Lecture Series, No. 7.

\bibitem{heiko2}
Friedemann Schuricht and Heiko von~der Mosel.
\newblock \href {http://dx.doi.org/10.1007/s00205-003-0253-x} {Euler-{L}agrange
  equations for nonlinearly elastic rods with self-contact}.
\newblock {\em Arch. Ration. Mech. Anal.}, 168(1):35--82, 2003.

\bibitem{SvdM}
Friedemann Schuricht and Heiko von~der Mosel.
\newblock \href {http://dx.doi.org/10.1007/s00209-002-0448-0} {Global curvature
  for rectifiable loops}.
\newblock {\em Math. Z.}, 243(1):37--77, 2003.

\bibitem{SvdM07}
Pawe{\l} Strzelecki and Heiko von~der Mosel.
\newblock \href {http://dx.doi.org/10.1007/s00209-007-0117-4} {On rectifiable
  curves with {$L^p$}-bounds on global curvature: self-avoidance, regularity,
  and minimizing knots}.
\newblock {\em Math. Z.}, 257(1):107--130, 2007.

\bibitem{randy-project_2013}
Pawe{\l} Strzelecki and Heiko von~der Mosel.
\newblock \href {http://dx.doi.org/doi:10.1016/j.physrep.2013.05.003} {Menger
  curvature as a knot energy}.
\newblock {\em Physics Reports}, 530:257--290, 2013.

\bibitem{isaac_2014}
Pawe{\l} Strzelecki and Heiko von~der Mosel.
\newblock \href {http://dx.doi.org/10.1088/1742-6596/544/1/012018} {How
  averaged menger curvatures control regularity and topology of curves and
  surfaces}.
\newblock In {\em Knotted, Linked and Tangled Flux in Quantum and Classical
  Systems}, Journal of Physics Conference Series. IoP, Cambridge, 2014.

\bibitem{sumners:dna}
De~Witt Sumners.
\newblock \href {http://dx.doi.org/10.1007/978-3-642-15637-3_11} {D{NA}, knots
  and tangles}.
\newblock In {\em The mathematics of knots}, volume~1 of {\em Contrib. Math.
  Comput. Sci.}, pages 327--353. Springer, Heidelberg, 2011.

\bibitem{vdM:meek}
Heiko von~der Mosel.
\newblock \href {http://iospress.metapress.com/content/1WRYFLGDG3AL499J}
  {Minimizing the elastic energy of knots}.
\newblock {\em Asymptot. Anal.}, 18(1-2):49--65, 1998.

\bibitem{vdM:eke3}
Heiko von~der Mosel.
\newblock \href {http://dx.doi.org/10.1016/S0294-1449(99)80010-9} {Elastic
  knots in {E}uclidean {$3$}-space}.
\newblock {\em Ann. Inst. H. Poincar\'e Anal. Non Lin\'eaire}, 16(2):137--166,
  1999.

\end{thebibliography}
\end{document}